\def\EMAIL#1{\href{mailto:#1}{#1}}
\newcommand\K{\mathbb{K}}
\DeclareMathAlphabet{\mathpzc}{OT1}{pzc}{m}{it}
\numberwithin{equation}{section}
\numberwithin{proposition}{section}
\numberwithin{theorem}{section}
\numberwithin{remark}{section}
\numberwithin{definition}{section}
\numberwithin{example}{section}
\numberwithin{corollary}{section}
\begin{document}

\title{Scholtes relaxation method for pessimistic bilevel optimization}

\titlerunning{Pessimistic bilevel optimization}        
\author{\small Imane Benchouk \and Lateef Jolaoso \and Khadra Nachi \and Alain Zemkoho}


\institute{Imane Benchouk \at
              Laboratory of Mathematical Analysis and Applications, University Oran 1, Algeria,             \EMAIL{benchouk.imane@univ-oran1.dz}           
           \and
          Lateef Jolaoso \at
           School of Mathematical Sciences,	University of Southampton, United Kingdom,
           \EMAIL{l.o.jolaoso@soton.ac.uk}
           \and
          Khadra Nachi \at
              Laboratory of Mathematical Analysis and Applications, University Oran 1, Algeria,
              \EMAIL{nachi.khadra@univ-oran1.dz}\\
        (Part of the work was completed while this author was visiting the School of Mathematics at the University of Southampton)
              \and
         Alain Zemkoho \at
           School of Mathematical Sciences,	University of Southampton, United Kingdom,
           \EMAIL{a.b.zemkoho@soton.ac.uk}.
}

\date{Received: date / Accepted: date}

\maketitle

\begin{abstract}
When the lower-level optimal solution set-valued mapping of a bilevel optimization problem is not single-valued, we are faced with an ill-posed problem, which gives rise to the optimistic and pessimistic bilevel optimization problems, as tractable algorithmic frameworks. However, solving the pessimistic bilevel optimization problem is far more challenging than the optimistic one; hence, the literature has mostly been dedicated to the latter class of the problem. The Scholtes relaxation has appeared to be one of the simplest and most efficient ways to solve the optimistic bilevel optimization problem in its Karush-Kuhn-Tucker (KKT) reformulation or the corresponding more general mathematical program with complementarity constraints (MPCC). Inspired by such a success, this paper studies the potential of the Scholtes relaxation in the context of the pessimistic bilevel optimization problem. To proceed, we consider a pessimistic bilevel optimization problem, where all the functions involved are at least continuously differentiable. Then assuming that the lower-level problem is convex, the KKT reformulation of the problem is considered under the Slater constraint qualification. Based on this KKT reformulation, we introduce the corresponding version of the Scholtes relaxation algorithm. {{We then construct theoretical results ensuring that the limit of a sequence of global/local optimal solutions (resp. {{stationary }} points) of the aforementioned Scholtes relaxation is a global/local optimal solution (resp. {{stationary }} point) of the KKT reformulation of the pessimistic bilevel program.}} {{The results are accompanied by technical constructions ensuring that the Scholtes relaxation algorithm is well-defined or that the corresponding parametric optimization problem is more tractable}}. Furthermore, we perform some numerical experiments to assess the performance of the Scholtes relaxation algorithm using various examples. In particular, we study the effectiveness of the algorithm in obtaining solutions that can satisfy the corresponding C-stationarity concept.
\keywords{pessimistic bilevel optimization \and KKT reformulation \and Scholtes relaxation \and C-stationarity}
\subclass{ 	90C26 \and 90C31 \and 90C33 \and 90C46}
\end{abstract}

\section{Introduction}\label{Introduction}
We consider the pessimistic bilevel optimization problem
\begin{equation}\tag{P$_p$}\label{PBP}
    \underset{x\in X}\min~\varphi_{p}(x):=\underset{y\in S(x)}\max~F(x,y),
\end{equation}
where $X:=\left\{x\in \mathbb{R}^n\left|\; G(x)\leq 0\right.\right\}$ and $F\, : \mathbb{R}^n\times \mathbb{R}^m \rightarrow \mathbb{R}$ represent the upper-level/leader's feasible set and objective function, respectively. The upper-level constraint function $G$ is defined from $\mathbb{R}^n$ to $\mathbb{R}^p$. As for the set-valued mapping $S: \mathbb{R}^n \rightrightarrows \mathbb{R}^m$, it collects the optimal solutions of the lower-level/follower's problem for selections of the leader; i.e., for all $x\in X$, we have
 \begin{equation}\label{S(x)}
    S(x):=\underset{y\in K(x)}{\arg\min}~f(x,y)
 \end{equation}
 and assume throughout that  $S(x) \neq \varnothing$ if $x\in X$ and $S(x)= \varnothing$ otherwise. Note that in \eqref{S(x)}, the function $f\, : \mathbb{R}^n\times \mathbb{R}^m \rightarrow \mathbb{R}$ represents the lower-level/follower's objective function, while the set-valued mapping $K: \mathbb{R}^n \rightrightarrows \mathbb{R}^m$ describes the lower-level/follower's feasible points defined by
 \begin{equation*}\label{M(x)}
    K(x):=\left\{y\in \mathbb{R}^m|\; g(x,y)\leq 0\right\}
 \end{equation*}
with $g\, : \mathbb{R}^n\times \mathbb{R}^m \rightarrow \mathbb{R}^q$. We assume  that  $F$, $f$, $G$, and $g$ are twice continuously differentiable. More general functions and feasible sets can be considered for our analysis. But to focus our attention on the main ideas, we let these differentiability assumptions hold throughout the paper.

Although problem \eqref{PBP} {{is more difficult to solve}} \cite{Au,DempeBook,D2,D3,LamparielloEtal2019,ZemkohoSetValPaper1}, it has only scantly been addressed in the literature, in comparison to its optimistic counterpart, which is at the centre of almost all the attention; see, e.g., the books \cite{DempeBook,DempeZemkohoBook,OutrataBook} and references therein.
However, problem \eqref{PBP} is more realistic from the practical modelling perspective, given that it assumes that there is no cooperation between the leader and the follower, in contrary to the situation in the optimistic version of the problem. For a more detailed analysis of the pessimistic and optimistic bilevel optimization problems and the related structural challenges and differences between them, interested readers are referred to the latter references.

In this paper, we consider the Karush-Kuhn-{{Tucker}} (KKT) reformulation
\begin{equation}\tag{KKT}\label{MPCC}
    \underset{x\in X}\min~\psi_{p}(x):=\underset{(y, u)\in \mathcal{D}(x)}\max~F(x,y)
\end{equation}
of problem \eqref{PBP}, where the set-valued mapping $\mathcal{D} :\mathbb{R}^{n} \rightrightarrows \mathbb{R}^{m}\times\mathbb{R}^{q}$ is given by
\begin{equation}\label{KKT system}
\mathcal{D}(x) := \left\{\left. (y, u)\in \mathbb{R}^{m+q} \right|\;\; \mathcal{L}(x,y,u)=0, \;\; u\geq 0, \;\, g(x,y)\leq0, \;\, u^\top g(x, y)=0\right\}
\end{equation}
with $\mathcal{L}(x,y,u):=\nabla_{y}L(x,y,u)$ for the Lagrangian function $L:(x,y,u)\rightarrow L(x,y,u):=f(x,y)+u^{T}g(x,y)$  associated to the parametric optimization problem describing the lower-level optimal solution set-valued mapping \eqref{S(x)}.
%
Of course, as it is the case for any KKT reformulation in bilevel optimization \cite{Au,DempeDutta2012,DempeZemkohoKKTRefNonsmooth,DempeZemkohoKKTRef}, we will assume throughout the paper that the lower-level problem \eqref{S(x)} is convex (i.e., the functions $f$ and $g_i$, $i=1, \ldots, q$ are convex w.r.t. $y$) and a constraint qualification (CQ) is satisfied. In particular, we assume that the following {{parametric}}
Slater CQ  is satisfied throughout this article (see, e.g.,  \cite{Au2,DempeDutta2012}):
\begin{equation}\tag{SCQ}\label{SCQ}
\forall x\in X,\;\; \exists y(x)\in\mathbb{R}^{m}: \;g_{i}(x,y(x))<0,\;\;\forall i=1, \ldots, q.
\end{equation}
%
%
Obviously, problem \eqref{MPCC} is a special class of the minmax optimization problem with parametric complementarity or equilibrium constraints in the {lower-level} problem. In the sequel, we will refer to it as the KKT reformulation of the pessimistic bilevel optimization problem \eqref{PBP}.\\[-2.5ex]

\begin{algorithm}[H]\label{algorithm 1}
\caption{Scholtes relaxation method for pessimistic bilevel optimization}
\begin{algorithmic}
\STATE \textbf{Step 0}: Choose ${{t^0>0}}$ and set  $k:=0$.
\STATE \textbf{Step 1}: Solve problem \eqref{RMPCCt} for $t:=t^k$.
\STATE \textbf{Step 2}: Select ${{0<t^{k+1}<t^k}}$, set $k:=k+1$, and go to Step 1.
\end{algorithmic}
\end{algorithm}
${}$\\[-7.1ex]

The KKT reformulation for the optimistic counterpart of problem \eqref{PBP} is one of the standard approaches in the process of solving the problem. In the context of the KKT reformulation of the optimistic bilevel program, one of the most popular class of solution techniques is represented by relaxation methods, which enables a mitigation of the difficulties  caused by the presence of the complementarity constraints present in the inner feasible set of problem \eqref{MPCC}. Various relaxation methods have been proposed in the literature (see, e.g., \cite{HoheiselEtAlComparison2013}) to address the KKT reformulation of the optimistic bilevel optimization or more precisely, for the corresponding more general mathematical program with complementary constraints (MPCC). The latter publication provides a comparison of these relaxations and shows that the Scholtes relaxation introduced in  \cite{Scholtes2001} is not only simpler, from its construction, but it is also superior in terms of its  numerical efficiency. Motivated by this, we introduce the following Scholtes relaxation for the pessimistic bilevel program \eqref{PBP} via its KKT reformulation \eqref{MPCC}:
\begin{equation}\tag{KKT$_t$}\label{RMPCCt}
\underset{x\in X}{\min}~\psi_{p}^{t}(x):=\underset{(y,u)\in\mathcal{D}
^{t}(x)}{\max}F(x,y),
\end{equation}
where, for all $t>0$,  $\mathcal{D}^{t}$ represents the Scholtes perturbation of the {{set-valued mapping  $\mathcal{D}$ in}} \eqref{KKT system}
\begin{equation}\label{Dt}
\mathcal{D}^{t}(x):=\left\{(y,u)\in\mathbb{R}^{m+q}\left\vert\,
\mathcal{L}(x,y,u)=0, \;\, u\geq0,\;\; g(x,y)\leq0, \;\, -u_{i}g_{i}(x,y)\leq t, \;\; i=1,...,q \right.\right\},
\end{equation}
which is assumed to be nonempty throughout the paper.

We then introduce Algorithm \ref{algorithm 1}, which can be seen as the Scholtes relaxation algorithm for the KKT reformulation \eqref{MPCC} of the pessimistic bilevel optimization problem \eqref{PBP}. Our primary goal in this paper is to provide a theoretical framework ensuring that the limit of a sequence of global/local optimal solutions  $(x^k)$  for  \eqref{RMPCCt} for $t:=t^k$, computed through Algorithm \ref{algorithm 1}, is a global/local optimal solution of problem \eqref{MPCC} as $t^k\downarrow 0$. Considering the fact that problems \eqref{RMPCCt} and \eqref{MPCC} are both nonconvex, it will typically be more realistic to aim at computing their stationary points. Hence, we also carefully address the case where the sequence of points computed in Step 1 of Algorithm \ref{algorithm 1} are only stationary points $(\zeta^k)$  of \eqref{RMPCCt} for $t:=t^k$. And therefore, we establish that the limit of this sequence is  a stationary point of \eqref{MPCC} as $t^k\downarrow 0$, under suitable assumptions.
As we study these questions, we also provide technical conditions ensuring that the problem solved in Step 1 (i.e., solving problem  \eqref{RMPCCt} directly or computing its stationary points) remains tractably solvable as we get close to a limit point of interest.
It is important to note that a global/local optimal solution of the KKT reformulation \eqref{MPCC}  is equivalent to a global/local optimal solution of  \eqref{PBP} under mild assumptions \cite{Au}.

%

Also note that only a very small number of papers have investigated solution algorithms to solve problem \eqref{PBP}. As to the few works that we are aware of, we can mention \cite{D2}, which provides necessary optimality conditions for various scenarios of the problem; the results there are extended to nonsmooth functions in \cite{DMZns2019}. Various papers, including \cite{LoridanMorgan1989,LoridanMorgan1996}, from the same authors and their collaborators, propose approximations and stability results based on perturbations in the value function constraint of the lower-level value function reformulation of problem \eqref{PBP},  in the case where the lower-level feasible set is independent from $x$. In \cite{WiesemannTsoukalasKleniatiRustem2013}, a semi-infinite programming-based algorithm is developed for a slightly general version of problem \eqref{PBP}, also in the case where the lower-level feasible set is independent from $x$. The article \cite{CervinkaMatonaha} proposes a two step process to compute a special class of equilibrium for \eqref{PBP}, which consists of solving the problem with an off-the-shelf solver while for fixed values of $x$, the parametric optimization problem defining $\varphi_p$ is solved with a certain BFO (Brute-Force Optimizer). Drawing inspiration from the optimistic bilevel optimization literature, the paper \cite{LamparielloEtal2019} proposes a standard-type approximation model for problem \eqref{PBP}, where $F(x, y)$ is minimized w.r.t. $(x, y)$ over a feasible set described in part by  the optimal solution set of the problem to minimize $F(x,y)$ w.r.t. $y$ subject to an approximation of the lower-level optimal solution set-valued mapping $S$ \eqref{S(x)}. And finally, as already mentioned above, \cite{Au} studies the relationship between problem \eqref{PBP} and \eqref{MPCC}. Clearly, we are not aware of any attempt to solve problem \eqref{PBP} from a perspective that is close to the one considered in this paper.

In summary, the main contributions of this paper are as follows:
\begin{enumerate}
    \item We introduce the KKT reformulation \eqref{MPCC} of the pessimistic bilevel optimization problem \eqref{PBP}  and the corresponding Sholtes relaxation problem \eqref{RMPCCt}, with relaxation parameter $t>0$, and study some of its basic structural properties; cf. Section \ref{Basic set-valued mapping properties}.
    \item We introduce a Scholtes relaxation method tailored to the KKT reformulation of the pessimistic bilevel program \eqref{PBP} and prove its convergence to local/global optimal solutions for problem \eqref{MPCC} under tractable assumptions; see Section \ref{Computing global and local optimal solutions} for the corresponding details.
    \item We construct a suitable framework to ensure the convergence of our Scholtes relaxation method to a C-stationary point tailored to problem \eqref{MPCC}; cf. Section \ref{Computing C-stationary points}.
    \item Numerical experiments are conducted to show how the Scholtes relaxation method introduced in this paper can be applied to compute stationary for problem \eqref{MPCC}; cf. Section \ref{Sec:Numerical}. 
\end{enumerate}

{{The remainder}} of the paper is organized as follows. In the next section, we introduce basic concepts from variational analysis that will be useful in the subsequent sections, and also conduct a preliminary analysis of \eqref{RMPCCt}. Section \ref{Computing global and local optimal solutions} develops a framework to ensure that {{a global (resp. local) optimal solution for  \eqref{MPCC} can  be obtained from a converging sequence of global (resp. local) optimal solutions of \eqref{RMPCCt}}}. {{In Section \ref{Computing C-stationary points}, we study the situation where Step 1 of Algorithm \ref{algorithm 1} instead computes a special class of {{stationary}} points for \eqref{RMPCCt} and show how the limit of the corresponding converging  sequence of points is a C-stationary point for \eqref{MPCC}}}.  As we develop the theoretical results in Sections \ref{Basic set-valued mapping properties}--\ref{Computing C-stationary points}, illustrative examples and discussions are provided to show that the required assumptions are tractable.  In Section \ref{Sec:Numerical}, we conduct some numerical experiments to assess the performance of our Scholtes relaxation algorithm and compare its behavior on different forms of the optimality conditions of \eqref{RMPCCt}.  Conclusions and some ideas for future investigations are provided in Section \ref{sec:conclusions}. 

\section{Basic mathematical tools and preliminary analysis}\label{Basic set-valued mapping properties}
In this section, we briefly present some basic  properties, mostly related to variational analysis, which will be used throughout the paper; for more detail on the main concepts, see  \cite{BS2,RTW}, for instance.
We start with some notation. First, {{we denote by $B(a,r)$ the open ball in $\mathbb{R}^{n}$  with center the point $a\in \mathbb{R}^{n}$ and radus $r>0$ and we use $\mathbb{B}_{n}$ to denote the unit ball centered at the origin of the space}}. Given two subsets $A$ and $B$ of $\mathbb{R}^{n}$, their Hausdorff
distance $d_{H}(A,B)$ is given as
\[
d_{H}(A,B):=\max~\{e(A,B), \;\, e(B,A)\},
\]
where the excess $e(A,B)$ of $A$ over $B$ is defined by the formula
\[
e(A,B):=\underset{x\in A}{\sup}~d(x,B)
\]
with $d(x,B):=\underset{y\in B}{\inf}~d(x,y)$ representing the distance from the point $x$ to the set $B$, while considering the usual conventions
 $e(\varnothing,B)=0$ and $e(A,\varnothing)=+\infty$ if
$A\neq\varnothing$. Furthermore, let
\[
\mathrm{dom}\,\Psi:=\left\{x\in\mathbb{R}^{n}|\;\;\Psi(x)\neq\varnothing\right\} \;\, \mbox{ and }\;\, \mbox{gph}\,\Psi :=\{(x,y)\in\mathbb{R}^{n+m}|\;\;y\in\Psi(x)\}
\]
denote the domain  and graph of the set-valued mapping $\Psi :\mathbb{R}^{n}\rightrightarrows\mathbb{R}^{m}$, respectively. 

\begin{definition}
A set-valued mapping $\Psi:\mathbb{R}^{n}\rightrightarrows\mathbb{R}^{m}$
is said to be
\begin{description}
\item[(i)] inner semicompact at $\bar{x}\in\mathrm{dom}\Psi$ if for any
sequence $(x_{k})_k$ such that $x_{k}\rightarrow\bar{x}$ there exists a sequence $(y_{k})_k$
with $y_{k}\in\Psi(x_{k})$ such that $(y_{k})_k$ admits a convergent
subsequence;
\item[(ii)]  inner semicontinuous at $(\bar{x},\bar
{y})\in\mbox{gph }\Psi$ if for any sequence $(x_{k})_{k}$ such that $x_k \rightarrow\bar{x}$,
there exists a sequence $(y_{k})_k$ such that $y_{k}\in\Psi(x_{k})$ and
$y_{k}\rightarrow\bar{y}$ or equivalently, if $d(\bar{y},\Psi(x))\rightarrow0$
whenever $x\rightarrow\bar{x}$;
\item[(iii)] lower semicontinuous in the sense of Hausdorff (i.e., \textit{H-lower semicontinuous}, for short) at $\bar{x}
\in\mathrm{dom}\Psi$  if, for every $\varepsilon
>0$, there exists a neighborhood $U$ of $\bar{x}$ such that for any $x\in U$,
\[
e(\Psi(\bar{x}),\,\Psi(x))<\varepsilon \;\, \mbox{ or equivalently } \;\, \Psi(\bar{x})\subset\Psi(x)+\varepsilon\mathbb{B}_{m};
\]
\item[(iv)] upper semicontinuous in the sense of Hausdorff (i.e., \textit{H-upper semicontinuous}, for short) at $\bar{x}\in\mathrm{dom}\Psi$
 if for every $\varepsilon>0$, there exists a
neighborhood $U$ of $\bar{x}$ such that for any $x\in U$,
\[
e(\Psi(x),\,\Psi(\bar{x}))<\varepsilon  \;\, \mbox{ or equivalently } \;\,\Psi(x)\subset\Psi(\bar{x})+\varepsilon\mathbb{B}_{m};
\]
\item[(v)]  upper semicontinuous in the sense of Berge (i.e., \textit{B-upper semicontinuous}, for short) at $\bar{x}$ if  for each open set $V$ such that $\Psi(\bar{x})\subset V$, there exists a neighbourhood $U$ of $\bar{x}$ such that $\Psi(U)\subset V$;
\item[(vi)] lower semicontinuous in the sense of Berge (i.e., \textit{B-lower semicontinuous}, for short) at $\bar{x}$ if for each open set $V$ such that $V\cap \Psi(\bar{x})\neq \emptyset$, there is a neighbourhood $U$ of $\bar{x}$ such that for all $x\in U$, $V\cap \Psi(x)\neq \emptyset$;
\item[(vii)] Aubin continuous (or Lipschitz-like) around
$(\bar{x},\bar{y})\in\mbox{gph }\Psi$ if there is a  constant $\tau>0$ and there exist a
neighborhood $U\times V$ of $(\bar{x},\bar{y})$ such that for any
$x,x^{\prime}\in U$,
\begin{equation}\label{LipschitzContSet}
e(\Psi(x)\cap V,\,\Psi(x^{\prime}))\leq\tau\left\Vert x-x^{\prime}\right\Vert;
\end{equation}
i.e., for any $y\in\Psi(x)\cap V$, there exists $y^{\prime}\in\Psi
(x^{\prime})$ such that
$
\left\Vert y-y^{\prime}\right\Vert \leq\tau\left\Vert x-x^{\prime}\right\Vert.
$
If inequality \eqref{LipschitzContSet} is satisfied with $V=\mathbb{R}^m$, $\Psi$ is said to be Lipschitz continuous around $\bar{x}$.
\end{description}
\end{definition}

There are various interconnections between these concepts. Let us summarize a few key ones relevant to this paper. First, we have the following relationships between the Hausdorff and Berge semicontinuity:
\begin{theorem}[see Lemma 2.2.3 in \cite{KlatteEtAlBook1982}]\label{Lemma223}
Consider the set-valued mapping $\Psi:\mathbb{R}^{n}\rightrightarrows\mathbb{R}^{m}$  and let $\bar{x} \in \mathrm{dom}\Psi$:
\begin{description}
    \item[(i)] If $\Psi$ is H-lower semicontiuous  at $\bar{x}$, then $\Psi$ is B-lower semicontinuous  at $\bar{x}$. The converse holds true if we assume that the set $\Psi(\bar{x})$ is compact.
     \item[(ii)] If $\Psi$ is B-upper semicontiuous at $\bar{x}$, then $\Psi$ is H-upper semicontinuous  at $\bar{x}$ and the converse holds when one assumes that the set $\mathrm{cl}\Psi(\bar{x})$ is compact.
\end{description}
\end{theorem}

Observe that   the H-lower semicontinuity of $\Psi:\mathbb{R}^{n}\rightrightarrows\mathbb{R}^{m} $ at $\bar{x}$ implies the inner semicontinuity  of $\Psi $ at $(\bar{x},\bar{y})$ for every $\bar{y}\in \Psi(\bar{x})$. The Lipschitz-likeness of $\Psi $ around $(\bar{x},\bar{y})$  clearly implies the inner semicontinuity of $\Psi $ at $(\bar{x},\bar{y})$, which obviously implies the inner
semicompactness of $\Psi $ at $\bar{x}$. Moreover, any nonempty set-valued mapping  that is uniformly bounded
around $\bar{x}$ is obviously inner semicompact at this point.
It is important to recall that the Lipschitz-likeness of a set-valued mapping $\Psi :\mathbb{R}^{n}\rightrightarrows\mathbb{R}^{m} $ around a point $(\bar{x},\bar{y})$ is ensured by the Mordukhovich criterion \cite{Mork} (see also Theorem 9.40 in \cite{RTW}); i.e., $\Psi $ is Lipschitz-like around $(\bar{x},\bar{y})$ if and only if
\[
D^*\Psi(\bar{x},\bar{y})(0)=\left\lbrace 0\right\rbrace,
\]
provided that the graph of $\Psi$ is closed. Here, $D^*\Psi(\bar{x},\bar{y}):\mathbb{R}^{m}\rightrightarrows\mathbb{R}^{n}$ denotes the coderivative of  $\Psi$ at  $(\bar{x},\bar{y})$, defined by
$w \in D^*\Psi(\bar{x},\bar{y})(v)$ if and only if $(w,-v) \in N_{\mathrm{gph}\Psi}(\bar{x},\bar{y})$
with $N_{\mathrm{gph}\Psi}(\bar{x},\bar{y})$ being the limiting normal cone to $\mbox{gph}\,\Psi$ at $(\bar{x},\bar{y})$; see relevant details in the latter references.

In the sequel, we will also use the concept of Painlev\'{e}--Kuratowski \emph{outer/upper} and \emph{inner/lower} limit for a set-valued mapping $\Psi :\mathbb{R}^n \rightrightarrows \mathbb{R}^m$ at a point $\bar{x} \in\mathrm{dom}\Psi$, which are respectively defined by
\[
\begin{array}{rcl}
\underset{x \longrightarrow \bar x}\limsup~\Psi(x) &:=& \left\{y\in \mathbb{R}^m |\;\;\exists x_k \rightarrow \bar x, \;\; ,y_k \rightarrow y \;\; \mbox{ with } \;y_k\-\in \Psi(x_k)\; \mbox{ for all }\; k  \right\},\\[2ex]
\underset{x \longrightarrow \bar x}\liminf~\Psi(x)&:=&\left\{y\in \mathbb{R}^m |\;\;\forall x_k \rightarrow \bar x, \;\, \exists y_k \rightarrow y \; \mbox{ with } \;y_k\-\in \Psi(x_k)\; \mbox{ for all }\; k  \right\}.
\end{array}
\]
Recall that if $ \underset{x \longrightarrow \bar x}\limsup~\Psi(x) = \underset{x \longrightarrow \bar x}\liminf~\Psi(x)$, then the Painlev\'{e}--Kuratowski limit is said to exist at $\bar x$ and can simply be denoted by $\underset{x \longrightarrow \bar x}\lim~\Psi(x)=\underset{x \longrightarrow \bar x}\limsup~\Psi(x) = \underset{x \longrightarrow \bar x}\liminf~\Psi(x)$.

Next, we collect some basic properties on the mappings
defined  in \eqref{KKT system} and \eqref{Dt}.
\begin{proposition}\label{lem} For any $x\in\mathbb{R}^{n}$, it holds that:
\begin{description}
\item[(i)] $\mathcal{D}(x)$ and $\mathcal{D}^{t}(x)$ are closed ($t>0$);

\item[(ii)] $\mathcal{D}^{t_{1}}(x)\subset\mathcal{D}^{t_{2}}(x)$ for any
$t_{2}>t_{1}>0$;

\item[(iii)] $\mathcal{D}(x)=$ $\underset{t>0}{\cap}\mathcal{D}^{t}(x)\ $ and so for all
$t>0$,
$
e(\mathcal{D}(x),\mathcal{D}^{t}(x))=0
$
and
$\psi_p(x)\leq\psi^t_p(x)$;
\item[(iv)] $\mathcal{D}(x)=$ $\underset{t\downarrow0}{\lim}\mathcal{D}^{t}(x)$ in
the Painlev\'{e}-Kuratowski sense.
\end{description}
\end{proposition}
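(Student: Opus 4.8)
All four items reduce to one elementary observation together with standard facts about nested families of closed sets. The observation is that whenever a pair $(y,u)$ satisfies the relations $u\geq 0$ and $g(x,y)\leq 0$ that are common to $\mathcal{D}(x)$ and to every $\mathcal{D}^{t}(x)$, each product $u_{i}g_{i}(x,y)$ is nonpositive; hence $u^\top g(x,y)=\sum_{i=1}^{q}u_{i}g_{i}(x,y)=0$ holds if and only if $u_{i}g_{i}(x,y)=0$ for every $i$, equivalently if and only if $-u_{i}g_{i}(x,y)\leq 0$ for every $i$. This is exactly what links the complementarity system defining $\mathcal{D}(x)$ to the relaxed inequalities defining $\mathcal{D}^{t}(x)$. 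Item~(1) then follows from continuity alone: under the standing $C^1$ assumptions the maps $\mathcal{L}(x,\cdot,\cdot)$, $g(x,\cdot)$, $(y,u)\mapsto u^\top g(x,y)$ and $(y,u)\mapsto -u_{i}g_{i}(x,y)$ are continuous, so each of the sets $\{\mathcal{L}(x,y,u)=0\}$, $\{u\geq 0\}$, $\{g(x,y)\leq 0\}$, $\{u^\top g(x,y)=0\}$ and $\{-u_{i}g_{i}(x,y)\leq t\}$ is the preimage of a closed set and hence closed, and $\mathcal{D}(x)$, $\mathcal{D}^{t}(x)$ are finite intersections of such sets. Item~(2) is immediate, since $(y,u)\in\mathcal{D}^{t_1}(x)$ gives $-u_{i}g_{i}(x,y)\leq t_1<t_2$ for all $i$ while the remaining defining relations are unchanged.

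For item~(3) I would use the observation above to get both inclusions of $\mathcal{D}(x)=\bigcap_{t>0}\mathcal{D}^{t}(x)$: a point of $\mathcal{D}(x)$ has $-u_{i}g_{i}(x,y)=0\leq t$ for every $t>0$, so it lies in every $\mathcal{D}^{t}(x)$; conversely a point of $\bigcap_{t>0}\mathcal{D}^{t}(x)$ satisfies $-u_{i}g_{i}(x,y)\leq t$ for all $t>0$, hence $-u_{i}g_{i}(x,y)\leq 0$, which together with $u_{i}g_{i}(x,y)\leq 0$ forces $u_{i}g_{i}(x,y)=0$ and thus $u^\top g(x,y)=0$. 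In particular $\mathcal{D}(x)\subseteq\mathcal{D}^{t}(x)$, so every point of $\mathcal{D}(x)$ is at distance zero from $\mathcal{D}^{t}(x)$, which gives $e(\mathcal{D}(x),\mathcal{D}^{t}(x))=0$ (using the convention $e(\emptyset,\cdot)=0$ when $\mathcal{D}(x)=\emptyset$), and maximizing $F(x,\cdot)$ over the larger set $\mathcal{D}^{t}(x)$ can only raise the value, whence $\psi_{p}(x)\leq\psi_{p}^{t}(x)$.

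For item~(4) I would combine (2) and (3): the family $\{\mathcal{D}^{t}(x)\}_{t>0}$ is closed-valued and nonincreasing as $t\downarrow 0$ with intersection $\mathcal{D}(x)$, and for such a family the Painlev\'e--Kuratowski limit is precisely that intersection. Concretely, $\mathcal{D}(x)\subseteq\liminf_{t\downarrow 0}\mathcal{D}^{t}(x)$ is witnessed by constant sequences, since each point of $\mathcal{D}(x)$ lies in every $\mathcal{D}^{t}(x)$; and if $(y,u)=\lim_{k}(y_{k},u_{k})$ with $(y_{k},u_{k})\in\mathcal{D}^{t_{k}}(x)$ and $t_{k}\downarrow 0$, then for each fixed $s>0$ we have $(y_{k},u_{k})\in\mathcal{D}^{t_{k}}(x)\subseteq\mathcal{D}^{s}(x)$ for all large $k$, and closedness of $\mathcal{D}^{s}(x)$ yields $(y,u)\in\mathcal{D}^{s}(x)$; since $s>0$ is arbitrary, $(y,u)\in\bigcap_{s>0}\mathcal{D}^{s}(x)=\mathcal{D}(x)$ by~(3), so $\limsup_{t\downarrow 0}\mathcal{D}^{t}(x)\subseteq\mathcal{D}(x)$. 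Together with the trivial inclusion $\liminf\subseteq\limsup$ this shows that the Painlev\'e--Kuratowski limit exists and equals $\mathcal{D}(x)$.

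I do not anticipate any genuine obstacle here; the argument is essentially bookkeeping. The one point that deserves to be stated carefully is the equivalence between the single complementarity equality $u^\top g(x,y)=0$ and the componentwise equalities $u_{i}g_{i}(x,y)=0$, which breaks down without the accompanying sign constraints $u\geq 0$ and $g(x,y)\leq 0$; a secondary, purely formal, point is to keep track of the conventions for the Hausdorff excess and of the possibility that $\mathcal{D}(x)$ is empty.
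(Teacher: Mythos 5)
Your proposal is correct and follows essentially the same route as the paper: closedness from continuity of the defining functions, monotonicity and the intersection formula from the componentwise reading of the complementarity condition under the sign constraints, and the Painlev\'e--Kuratowski limit obtained by combining the inclusion $\mathcal{D}(x)\subseteq\mathcal{D}^{t}(x)$ (for the inner limit) with a passage to the limit along a convergent sequence in $\mathcal{D}^{t_k}(x)$ (for the outer limit). Your detour through the closedness of $\mathcal{D}^{s}(x)$ for fixed $s$ in item (4) is only a cosmetic variation of the paper's direct limit in the defining system.
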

\begin{proof}
For (i), note by continuity of the functions describing the sets $\mathcal{D}(x)$ and $\mathcal{D}^{t}(x)$, they are obviously closed.
Assertions (ii) is also obvious and so is  assertion (iii) given that $\mathcal{D}
(x)\subset\mathcal{D}^{t}(x)$ for all $t>0$.  Let us prove the Painlev\'{e}
-Kuratowski convergence (assertion (iv)); i.e.,
\[
\underset{t\downarrow0}{\lim\inf}\,\mathcal{D}^{t}(x)=\mathcal{D}(x)=\text{
}\underset{t\downarrow0}{\lim\sup}\,\mathcal{D}^{t}(x).
\]
Note that from assertion (iii), it holds that  $\mathcal{D}(x)\subset$ $\underset{t\rightarrow0^{+}}{\lim\inf
}\,\mathcal{D}^{t}(x)\subset$ $\underset{t\rightarrow0^{+}}{\lim\sup}\, \mathcal{D}^{t}(x)$.

Conversely, taking $(y,u)\in$ $\underset{t\downarrow0}{\lim\sup}\,\mathcal{D}^{t}(x)$, it follows that
for a sequence $(t_{k})_k$ {{converging}} to $0$, the point $(y,u)$ is a cluster point of a sequence
in $\mathcal{D}^{t_{k}}(x)$; i.e., there exists a sequence $(y^{k},u^{k})_{k}$ with
$(y^{k},u^{k})\in\mathcal{D}^{t_{k}}(x)$ which converges
(up to a subsequence) to $(y,u)$. Thus, for any $k$, we have
\[
\begin{array}
[c]{l}
\mathcal{L}(x,y^{k},u^{k})=0, \;\; u_{i}^{k}\geq0,\text{ }g_{i}(x,y^{k})\leq 0, \;\; -u_{i}^{k}g_{i}(x,y^{k})\leq t_{k}, \;\; i=1,...,q.
\end{array}
\]
Applying the limit to this system, we get $(y,u)\in\mathcal{D}(x)$. Hence,  $\underset
{t\rightarrow0^{+}}{\lim\sup}\,\mathcal{D}^{t}(x)\subset\,\mathcal{D}(x).$
\hfill\qed
\end{proof}

Next, we show that for a fixed point $x$, a sequence of objective function values of problem \eqref{RMPCCt} for $t:=t_k$ can converge to that of problem \eqref{MPCC} as the sequence $(t_k)_k$ converges to $0$ as $k\rightarrow\infty$.
\begin{proposition}\label{FirstProp}
Let $t\mapsto\psi_{p}^{t}(x)$ be upper semicontinuous at $0^+$, for any $x\in \mathbb{R}^n$ and
let $t_{k}\downarrow0$. Then for any $x\in\mathbb{R}^{n}$, we have
$\psi_{p}^{t_{k}}(x)\rightarrow\psi_{p}(x)$ as $k\rightarrow\infty$.
\end{proposition}
\begin{proof}
It is obvious that for any $x\in\mathbb{R}^{n},$ the sequence $(\psi
_{p}^{t_{k}}(x))$ converges for any sequence $(t_{k})_k$ {{converging}} to $0$.
Indeed, since $t_{k+1}\leq t_{k}$, we have $\mathcal{D}^{t_{k+1}}
(x)\subset\mathcal{D}^{t_{k}}(x)$ so that $\psi_{p}^{t_{k+1}}(x)\leq\psi
_{p}^{t_{k}}(x)$ for any $k$ and since the sequence is bounded from below by
$\psi_{p}(x)$, it converges. Hence,
\begin{equation}\tag*{\mbox{\qed}}
\psi_{p}(x)\leq\underset{k\rightarrow\infty}{\lim}\psi_{p}^{t_{k}}(x)=\lim \sup\psi_{p}^{t_{k}}(x)\leq\psi_{p}(x) \;\; \mbox{ for every }\;\; x\in \mathbb{R}^n.   
\end{equation}
\end{proof}
We conclude this section with an illustrative example showing how the feasible set and objective function of problem \eqref{MPCC} change under the Scholtes-type relaxation.
\begin{example}\label{ex4} Consider an example of the pessimistic bilevel problem \eqref{PBP} with
\begin{equation}\label{ex2pb}
F(x,y):=x+y, \;\; X:=[-1, \, 1], \;\; f(x,y):=xy, \;\mbox{ and }\; K(x):=[0, \, 1].
\end{equation}
We can easily check that
\[
S(x) = \left\{
\begin{array}[c]{ll}
\left\{  0\right\} & \;\text{  if }\; 0<x\leq1,\\
\left\{  1\right\} & \;\text{ if }\; -1\leq x<0,\\
\left[  0,1\right] & \;\text{ if }\; x=0,
\end{array}
\right.\;\; \mbox{ and }\;\; \varphi_{p}(x)=\left\{
\begin{array}[c]{ll}
x & \text{  if }\; 0<x\leq1,\\
x+1 & \text{ if }\; -1\leq x\leq0.
\end{array}
\right.
\]
Clearly, $\bar{x}=-1$ is the global optimal solution for problem \eqref{ex2pb}. Taking into account that the Lagrangian function $L(x,y,u)=xy+u_{1}g_{1}(x,y)+u_{2}g_{2}(x,y)\;\;\text{with}\;\; g_1(x,y):=-y\;\; \text{and}\;\; g_2(x,y):=y-1$, the conditions
\begin{equation*}
    \mathcal{L}(x,y,u)=0, \;\; u_i\geq0,\;\;\;\text{and}\;\;\; g_i(x,y)\leq0 \;\;\mbox{ for }\;\, i=1,\; 2
\end{equation*}
    are equivalent to
\begin{equation}\label{exp1}
     u_1\geq0,\;\; u_2=u_1-x\geq0, \; \;\;\text{and}\;\;\; 0\leq y\leq 1.
\end{equation}
\begin{figure}
    \centering
    \includegraphics[width = .68\linewidth]{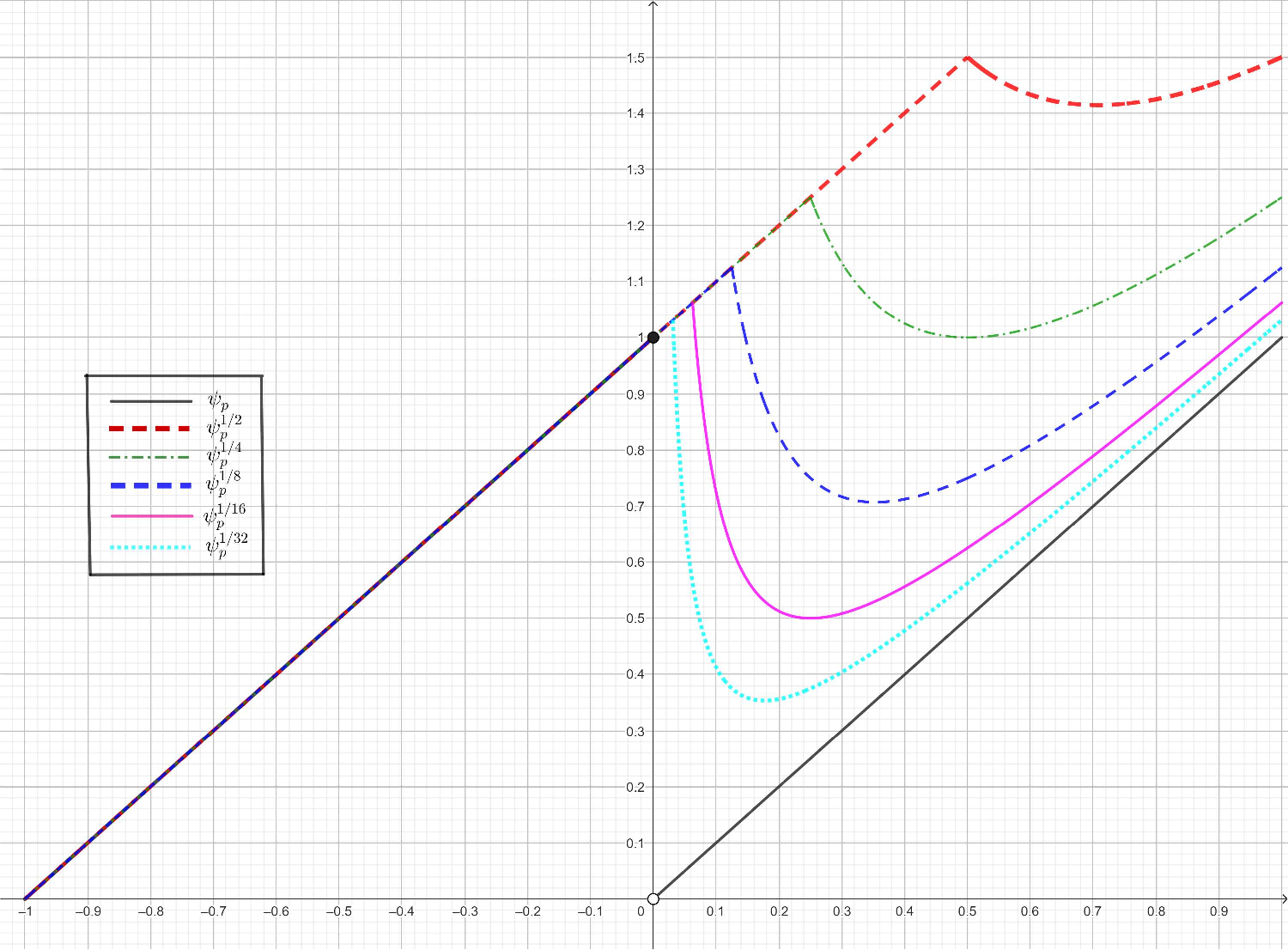}
    \caption{Graphical representation of the functions $\psi_p$ \eqref{MPCC} and $\psi^t_p$ \eqref{RMPCCt}, for the scenarios $t=\frac{1}{2},  \frac{1}{4}, \frac{1}{8}, \frac{1}{16}$, and $\frac{1}{32}$, in the context of the problem in Example \ref{ex4}.}
    \label{fig:my_label_2}
\end{figure}
Thus, the corresponding version of problem \eqref{MPCC} can be obtained with
\[
\mathcal{D}(0)=\left[0, \, 1\right]\times\left\{(0,0)\right\} \;\, \mbox{ and }\;\,
\mathcal{D}(x)=\left\{
\begin{array}{lll}
\left\{(0,x,0)\right\}         & \;\text{if} & \;0<x \leq 1,\\[1ex]
\left\{(1,\, 0,\, -x)\right\}  & \;\text{if} & \;-1\leq x<0,
\end{array}
\right.
\]
so that  $\psi_p=\varphi_p$,  which is continuous at any $x\neq 0$.
Moreover, since for $(y,u)$ satisfying \eqref{exp1}, one has
\begin{equation*}
\left( -u_{i}g_{i}(x,y)\leq t \mbox{ for } i=1,\,2\right)\;\; \Longleftrightarrow \;\; \left( u_1y\leq t\;\; \text{and}\;\;  (u_2-x)(y-1)\geq -t\right)
\end{equation*}
so that the regularized version \eqref{RMPCCt} of the problem can be written
with $\mathcal{D}^{t}(x)$  for $0<t\leq x\leq1$ as
\begin{equation*}
\mathcal{D}^{t}(x)=\left\{\left(y, \,(u_{1}, \,u_{1}-x)\right)\;\;\; \left\vert
\begin{array}[c]{c}
\left(x\leq u_{1}<t+x\;\;\wedge\;\; 0\leq y\leq\dfrac{t}{u_{1}}\right)\\
\vee\\
\left(t+x\leq u_{1}\leq \bar{u}^t(x)\;\;\wedge\;\;-\dfrac{t}{u_{1}-x}+1\leq y\leq
\dfrac{t}{u_{1}}\right)
\end{array}
\right.  \right\},
\end{equation*}
where $\bar{u}^{t}(x)$ is the root of the equation  $1-\dfrac{t}{z-x}=\dfrac{t}{z}$ (for fixed $t$ and $x$) that satisfies $\bar{u}^t(x)>\max(0,x)$, namely, $\bar{u}^t(x)=\dfrac{2t+x+\sqrt{4t^{2}+x^{2}}}{2}$.
And for $0\leq x < t\leq1$,
\begin{equation*}
\mathcal{D}^{t}(x)=\left\{(y,(u_{1},\;\, u_{1}-x))\;\;\; \left\vert
\begin{array}[c]{c}
\left(x\leq u_{1}\leq t\;\; \wedge\;\; 0\leq y\leq1\right)\\[1ex]
\vee\\
\left(t<u_{1}\leq t+x\;\;\wedge\;\; 0\leq y\leq\dfrac{t}{u_{1}}\right)\\[1ex]
\vee\\
\left(t+x\leq u_{1}\leq \bar{u}^t(x)\;\;\wedge\;\;-\dfrac{t}{u_{1}-x}+1\leq y\leq \dfrac{t}{u_{1}}\right)
\end{array}
\right.  \right\}. \label{dt}
\end{equation*}
On the other hand for $-1\leq x\leq-t<0$, we obtain
\[
\mathcal{D}^{t}(x)=\left\{\left(y,\, (u_{1}, \,u_{1}-x)\right)\;\;\;\left\vert
\begin{array}[c]{c}
\left(0\leq u_{1}<t\;\;\wedge\;\;-\dfrac{t}{u_{1}-x}+1\leq y\leq1\right)\\[2ex]
\vee\\
\left(t\leq u_{1}\leq \bar{u}^t(x)\;\;\wedge\;\;-\dfrac{t}{u_{1}-x}+1\leq y\leq\dfrac
{t}{u_{1}}\right)
\end{array}
\right.  \right\}
\]
and for $-1\leq-t\leq x<0$, we get
\[
\mathcal{D}^{t}(x)=\left\{(y, \, (u_{1}, \,u_{1}-x))\;\;\; \left\vert
\begin{array}[c]{c}
\left(0\leq u_{1}\leq t+x\wedge0\leq y\leq1\right)\\[2ex]
\vee\\
\left(t+x<u_{1}\leq t\wedge-\dfrac{t}{u_{1}-x}+1\leq y\leq1\right)\\[2ex]
\vee\\
\left(t\leq u_{1}\leq \bar{u}^t(x)\wedge-\dfrac{t}{u_{1}-x}+1\leq y\leq\dfrac
{t}{u_{1}}\right)
\end{array}
\right.  \right\}.
\]
And subsequently, we have
\begin{equation*}
\psi^{t}_p(x) = \left\{
\begin{array}[l]{ll}
x + \dfrac{t}{x} & \;\mbox{if }\;\,t\leq x,\\[1ex]
x + 1 & \;\mbox{if }\;\, x<t.
\end{array}
\right.
\end{equation*}
Observe that for each $t>0$, the regularized function $\psi^{t}_p$ is continuous everywhere on the feasible set, in contrary to the function  $\psi_p$  which is everywhere except at $0$; see the graphically illustrations in Figure \ref{fig:my_label_2}. \hfill \qed
\end{example}

\section{Computing global and local optimal solutions}\label{Computing global and local optimal solutions}
%
%
In this section, we assume that in Step 1 of Algorithm \ref{algorithm 1}, we compute a global or local optimal solution of problem \eqref{RMPCCt} for $t:=t_k$ in the process of solving problem \eqref{MPCC}. Recall that a point $\bar{x}\in X$ is a local optimal solution for \eqref{PBP}
(resp. \eqref{MPCC}) if there exists a neighborhood $U$ of $\bar{x}$ such that condition
\begin{equation}\label{OptimalSolDef}
\forall x\in X\cap U: \;\varphi_{p}(\bar{x})\leq\varphi_{p}(x)\text{ \ (resp.
}\psi_{p}(\bar{x})\leq\psi_{p}(x)\text{)}
\end{equation}
is satisfied. Similarly, $\bar{x}\in X$ will be said to be a global optimal solution for \eqref{PBP} (resp. \eqref{MPCC})  if condition \eqref{OptimalSolDef} holds with $U=\mathbb{R}^n$.
%
%
%
%
%
We now state the convergence of Algorithm \ref{algorithm 1} when  \eqref{RMPCCt} is solved globally as $t\downarrow 0$.
\begin{theorem}\label{sg}
Let the function $x \mapsto \psi_{p}(x)$ be lower semicontinuous at $\bar x$ and $t\mapsto\psi_{p}^{t}(x)$ be upper
semicontinuous at $0^+$ for all $x\in \mathbb{R}^n$. Furthermore, let $t_{k}\downarrow0 $
and {{$(x^{k})_k$ be a sequence such that the point $x^{k}$ is a global optimal
solution of \eqref{RMPCCt} for $t:=t_k$. If the sequence $(x^{k})_k$ admits a subsequence (with the same notation) converging to  the point $\bar{x}$ as $k\rightarrow\infty$}}, then the point $\bar{x}$ is a global optimal solution of  \eqref{MPCC}.
\end{theorem}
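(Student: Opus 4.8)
The plan is to exploit the fact that solving \eqref{RMPCCt} globally gives, for each $k$, the inequality $\psi_p^{t_k}(x^k)\le \psi_p^{t_k}(x)$ for every $x\in X$, and then to pass to the limit on both sides using the two semicontinuity hypotheses together with the monotonicity already established in Proposition~\ref{lem} and the preceding theorem. First I would fix an arbitrary $x\in X$ and write the global optimality of $x^k$ as $\psi_p^{t_k}(x^k)\le \psi_p^{t_k}(x)$. For the right-hand side, the previous theorem (applied at the fixed point $x$, using upper semicontinuity of $t\mapsto\psi_p^t(x)$ at $0$) gives $\psi_p^{t_k}(x)\to\psi_p(x)$ as $k\to\infty$. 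So the task reduces to showing $\psi_p(\bar x)\le\liminf_{k\to\infty}\psi_p^{t_k}(x^k)$.

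Next I would handle the left-hand side. Since $t_k\downarrow 0$, for any fixed index $j$ we have $t_k\le t_j$ for all $k\ge j$, hence $\mathcal{D}^{t_k}(x^k)\subset\mathcal{D}^{t_j}(x^k)$ by Proposition~\ref{lem}(2), and therefore $\psi_p^{t_k}(x^k)\le\psi_p^{t_j}(x^k)$ for $k\ge j$ (the inner max is taken over a smaller set). Also $\psi_p(x^k)\le\psi_p^{t_k}(x^k)$ by Proposition~\ref{lem}(3). Thus for each fixed $j$,
\[
\psi_p(x^k)\le\psi_p^{t_k}(x^k)\le\psi_p^{t_j}(x^k)\qquad\text{for all }k\ge j .
\]
Taking $\liminf$ as $k\to\infty$ and using lower semicontinuity of $x\mapsto\psi_p(x)$ at $\bar x$ (so that $\psi_p(\bar x)\le\liminf_k\psi_p(x^k)$) gives
\[
\psi_p(\bar x)\le\liminf_{k\to\infty}\psi_p^{t_k}(x^k)\le\limsup_{k\to\infty}\psi_p^{t_j}(x^k).
\]
If, moreover, one knows that $x\mapsto\psi_p^{t_j}(x)$ is upper semicontinuous at $\bar x$ for each fixed $j$ — which is the natural companion property and can be arranged under the running assumptions on the data — then $\limsup_k\psi_p^{t_j}(x^k)\le\psi_p^{t_j}(\bar x)$, and letting $j\to\infty$ and invoking the previous theorem at $\bar x$ yields $\psi_p(\bar x)\le\psi_p(\bar x)$ together with $\psi_p(\bar x)\le\lim_{j\to\infty}\psi_p^{t_j}(\bar x)=\psi_p(\bar x)$; more importantly it closes the chain so that $\liminf_k\psi_p^{t_k}(x^k)\ge\psi_p(\bar x)$. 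Combining with the right-hand side limit gives $\psi_p(\bar x)\le\psi_p(x)$ for every $x\in X$, i.e., $\bar x$ is a global optimal solution of \eqref{MPCC}. (One should also note $\bar x\in X$ since $X$ is closed and $x^k\in X$.)

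The main obstacle is the step that passes from $\psi_p^{t_k}(x^k)$ at the moving point $x^k$ to something controlled at $\bar x$: a single lower-semicontinuity assumption on $\psi_p$ and upper-semicontinuity in $t$ do not by themselves control $\psi_p^{t_k}(x^k)$, because both the perturbation parameter and the base point are moving simultaneously. The cleanest resolution is the "diagonal" device above — freeze $j$, send $k\to\infty$ to reduce the moving base point to a fixed relaxation level $t_j$, then send $j\to\infty$ — which requires joint (or at least sequential) control such as upper semicontinuity of $x\mapsto\psi_p^{t}(x)$ for fixed $t$; establishing or invoking that property (via inner semicontinuity/uniform boundedness of $\mathcal{D}^t$, as developed in the paper's technical results) is where the real work lies. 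Everything else is monotonicity bookkeeping from Proposition~\ref{lem} and the squeeze argument already used in the preceding theorem.
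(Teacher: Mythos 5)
Your first two paragraphs already constitute a complete proof, and they are essentially the paper's own argument: from global optimality and Proposition \ref{lem}(3) one gets $\psi_p(x^k)\le\psi_p^{t_k}(x^k)\le\psi_p^{t_k}(x)$ for every $x\in X$, the left end is handled by lower semicontinuity of $\psi_p$ at $\bar x$, and the right end by upper semicontinuity of $t\mapsto\psi_p^{t}(x)$ at $0$ for the \emph{fixed} comparison point $x$, giving
\[
\psi_{p}(\bar{x})\;\le\;\underset{k\rightarrow\infty}{\lim\inf}\,\psi_{p}(x^{k})\;\le\;\underset{k\rightarrow\infty}{\lim\inf}\,\psi_{p}^{t_{k}}(x)\;\le\;\underset{k\rightarrow\infty}{\lim\sup}\,\psi_{p}^{t_{k}}(x)\;\le\;\psi_{p}(x).
\]

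However, everything from \enquote{If, moreover, one knows that $x\mapsto\psi_p^{t_j}(x)$ is upper semicontinuous at $\bar x$\dots} onward is a red herring, and your closing diagnosis of the \enquote{main obstacle} is mistaken. You claim the stated hypotheses do not control $\psi_p^{t_k}(x^k)$ because both $t$ and the base point move simultaneously; but no upper control of $\psi_p^{t_k}$ at the moving point is ever required. The quantity $\psi_p^{t_k}(x^k)$ only needs to be bounded \emph{below} by $\psi_p(x^k)$ (Proposition \ref{lem}(3) plus lower semicontinuity of $\psi_p$ at $\bar x$) and \emph{above} by $\psi_p^{t_k}(x)$ (global optimality), and the latter is evaluated at the fixed point $x$, where the assumed upper semicontinuity in $t$ applies directly. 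Indeed, your own displayed chain $\psi_p(\bar x)\le\liminf_k\psi_p(x^k)\le\liminf_k\psi_p^{t_k}(x^k)$ already establishes exactly the reduction you set yourself, so the subsequent \enquote{diagonal} device with the frozen index $j$ and the extra upper semicontinuity of $x\mapsto\psi_p^{t_j}(x)$ at $\bar x$ adds nothing and should be deleted: it is not \enquote{where the real work lies}, and invoking an unstated hypothesis there would only weaken the theorem.
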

\begin{proof}
Based on the definitions of $\psi_{p}^{t}$ and $\psi_{p}$, as well as the fact that $x^{k}$ is a
global optimal solution of problem \eqref{RMPCCt} for $t:=t_k$, we have
\[
\psi_{p}(x^{k})\leq\psi_{p}^{t_{k}}(x^{k})\leq\psi_{p}^{t_{k}}(x)\;\, \mbox{ for all }\;\, x\in X
\]
with $x^{k}\in X$ and the first inequality resulting from Proposition \ref{lem}(iii). Since the subsequence $x^{k}\rightarrow\bar{x}$ as $k\rightarrow\infty$, $\bar{x}\in X$ ($X$ being closed by continuity of $G$). Now, let $x\in X$. Given that $\psi_{p}$ is lower semicontinuous at $\bar{x}$ and $t\mapsto \psi_{p}^{t}(x)$ is upper semicontinuous at $0$, for all $x\in \mathbb{R}^n$, we have the string of inequalities
\begin{equation}\tag*{\mbox{\qed}}
\psi_{p}(\bar{x})\leq\text{ }\underset{k\rightarrow\infty}{\lim\inf}\,\psi
_{p}(x^{k})\leq\text{ }\underset{k\rightarrow\infty}{\lim\inf}\,\psi_{p}^{t_{k}
}(x)\leq\text{ }\underset{k\rightarrow\infty}{\lim\sup}\,\psi_{p}^{t_{k}}
(x)\leq\psi_{p}(x).
\end{equation}
\end{proof}

Considering the fact that problems  \eqref{MPCC} and \eqref{RMPCCt} are both nonconvex, it is more likely that in practice, a scheme to solve either problem would only compute local optimal solutions. Hence, we next provide a result where iteratively solving  \eqref{RMPCCt}  can ensure that a local optimal solution for  \eqref{MPCC} is obtained.

\begin{theorem}\label{Th32Here}
	Let the function $x \mapsto \psi_{p}(x)$ be lower semicontinuous
	$\bar{x}$, and $\bar{r}>0$ be such that $(t,u)\mapsto\psi_{p}^{t}(u)$ is upper
	semicontinuous at $(0,x)$ for any $x\in B(\bar{x},\bar{r})$. Furthermore, let
	$t_{k}\downarrow0$, $r^{k}>0$, and $(x^{k})_k$ be a sequence of optimal solutions of
	problem \eqref{RMPCCt}  for $t:=t^k$ in $X\cap B(x^{k}, r^{k})$; i.e.,
	\begin{equation*}
	\psi_{p}^{t_{k}}(x^{k})\leq\psi_{p}^{t_{k}}(x)\text{ \ \ }\forall x\in
	X\cap B(x^{k},r^{k}).\label{sl}
	\end{equation*}
{Then the point $\bar{x}$ is a local optimal solution of problem \eqref{MPCC}, if additionally, it holds that $ \bar {x}$ is a cluster point of the sequence $(x^{k})_k$   and $\underset{k\rightarrow\infty}{\lim\inf}\,r^{k}>0$.}
\end{theorem}
\begin{proof}
	{{Take $0<\bar{r}<\underset{k\rightarrow\infty}{\text{ }\lim\inf}\,r^{k}$ such that for
	$k$ large enough, $r^{k}>\bar{r}$. Let $x\in X\cap B\left(\bar{x},\,\dfrac{\bar{r}
	}{2}\right)$. Since there exists a subsequence of  $(x_k)$ (denoted for simplicity also by $(x_k)$) such that $x^{k}\rightarrow\bar{x}$ as $k\rightarrow+\infty,$ $x^{k}\in	B\left(\bar{x},\dfrac{\bar{r}}{2}\right)$ for any $k$ large enough,  $x\in
B(x^{k}, \,r^{k})$ given that
\[
	\left\Vert x-x^{k}\right\Vert \leq\left\Vert x-\bar{x}\right\Vert +\left\Vert
	\bar{x}-x^{k}\right\Vert <\bar{r}<r^{k}.
	\]
	Thus, $x\in X\cap B(x^{k},r^{k})$ and
	$	\psi_{p}^{t_{k}}(x^{k})\leq\psi_{p}^{t_{k}}(x)
	$
	for any $k$ sufficiently large. We can then conclude the proof by proceeding as in the proof of Theorem \ref{sg}}}. \hfill \qed
\end{proof}
{{Observe that in this theorem, the limit--based assumption that  $\underset{k\rightarrow\infty}{\lim\inf}\,r^{k}>0$ can be replaced by the chain of inequalities  $r^{k}$ $>$ $\bar{r}$ $>0$ (for $k$ sufficiently large), which is much more simpler to check.}} 

In Proposition \ref{FirstProp}, Theorem \ref{sg}, and Theorem  \ref{Th32Here}, some continuity assumptions are imposed on the function $\psi^t_p$ ($t\geq 0$). More precisely, in these results, we require the following assumptions:
\begin{description}
\item[(A1)] $x \mapsto \psi_{p}(x)$ is a lower semicontinuous function at
	$\bar{x}$ (see Theorem \ref{sg} and Theorem  \ref{Th32Here});
\item[(A2)]  $t\mapsto\psi_{p}^{t}(x)$ is upper semicontinuous at $0^+$ for all $x\in \mathbb{R}^n$ (see Proposition \ref{FirstProp} and Theorem \ref{sg});
\item[(A3)] $(t,u)\mapsto\psi_{p}^{t}(u)$ is an upper
	semicontinuous function at $(0,x)$ for any $x\in B(\bar{x},\bar{r})$ (see Theorem  \ref{Th32Here}).
\end{description}
Moreover, in Theorem \ref{sg} and Theorem  \ref{Th32Here}, we require the existence of a sequence $(x^{k})_k$ such that the point $x^{k}$ is an optimal
solution of problem \eqref{RMPCCt} for $t:=t_k$. For a fixed value of $t>0$, problem \eqref{RMPCCt} has an optimal solution if the set $X$ is compact and it holds that
\begin{description}
\item[(A4)] the function $x \mapsto \psi_{p}^{t}(x)$ is lower semicontinuous on $X$.
\end{description}

However, in practice, to solve problem \eqref{RMPCCt}, for  a fixed value of $t>0$, as a nonsmooth minimization problem, we might need a stronger assumption on $\psi_{p}^{t}$. In particular, if we want to extend the gradient descent method to Step 1 of Algorithm \ref{algorithm 1}, for example, we will need the following assumption to calculate the generalized gradient (or {\em subdifferential}) of this function in the sense of Clarke (\cite{Clarke1990}):
\begin{description}
\item[(A5)] the function $x \mapsto \psi_{p}^{t}(x)$ is Lipschitz continuous around the point $x^t$.
\end{description}
Overall, this means that assumptions (A1)--(A5) are important for a nice interaction between problems \eqref{MPCC} and \eqref{RMPCCt}, as well as  for practically solving the latter problem (with certain types of methods), for a fixed value of $t>0$, as required in Step 1 of Algorithm \ref{algorithm 1}. Based on \cite[Theorem 4.2.3]{KlatteEtAlBook1982} and \cite[Theorem 5.3]{B1S}, we next summarize some key results ensuring that these assumptions are satisfied. To proceed, let
\[
\mathcal{S}_{p}(x):=\left\{  (y,u)\in\mathcal{D}(x)\left\vert \;F(x,y)\geq
\psi_{p}(x)\right.  \right\} \; \mbox{ and } \;
\mathcal{S}_{p}^{t}(x):=\left\{  (y,u)\in\mathcal{D}^{t}(x)\left\vert
\;F(x,y)\geq\psi_{p}^{t}(x)\right.  \right\}
\]
describe the optimal solution set-valued mappings of the parametric optimization problems associated to the optimal value functions $\psi_p$ and $\psi^t_p$, respectively.
\begin{theorem}\label{ExistencePt} The following assertions are satisfied:
    \begin{description}
        \item[(i)] The function $x \mapsto \psi_{p}(x)$ is lower semicontinuous at the point $\bar x$ if the set-valued mapping $\mathcal{D}(\cdot)$ is B-lower semicontinuous  at the point $\bar{x}$.
        \item[(ii)] For any $x\in \mathbb{R}^n$, the function $t\rightarrow \psi_{p}^{t}(x)$ is upper semicontinuous at $\Bar{t}$ if the set-valued mapping $t\rightrightarrows \mathcal{D}^{t}(x)$ is H-upper semicontinuous at  $\Bar{t}$ and  the set $\mathcal{D}^{\Bar{t}}(x)$ is compact.
\item[(iii)] The function $(t, x)\rightarrow\psi_{p}^{t}(x)$ is lower (resp. upper) semicontinuous at the point $(\Bar{t},\Bar{x})$ if the set-valued mapping $(t,x)\rightrightarrows \mathcal{D}^{t}(x)$ is B-lower (resp. H-upper) semicontinuous at  $(\Bar{t},\Bar{x}) $ (resp. and $\mathcal{D}^{\Bar{t}}(\Bar{x})$ is compact).
        \item[(iv)] For $t > 0$, the function $x \mapsto \psi_{p}^{t}(x)$ is lower semicontinuous at the point $\bar x$ if the set-valued mapping $\mathcal{D}^{t}(\cdot)$ is B-lower semicontinuous $\bar{x}$.
     \item[(v)]For $t > 0$, the function $x \mapsto \psi_{p}^{t}(x)$ is Lipschitz continuous around $\bar x$ if condition (a) or (b) below holds:
     \begin{description}
         \item[(a)] $\mathcal{S}_{p}^{t}(\cdot)$  is inner semicontinuous at $(\bar{x},\bar{y},\bar{u})\in \mbox{gph}\, \Psi_{p}^{t}$ and $\mathcal{D}^{t}(\cdot)$ is Lipschitz-like  around $(\bar{x},\bar{y},\bar{u})$.\\[-2ex]
         \item[(b)] $\mathcal{S}_{p}^{t}(\cdot)$  is inner semicompact at $\bar{x}$ and $\mathcal{D}^{t}(\cdot)$ is Lipschitz-like  around $(\bar{x}, y, u)$ for all $(y, u) \in \mathcal{S}_{p}^{t}(\bar{x})$.
     \end{description}
    \end{description}
\end{theorem}
Clearly, in this order, the assumptions in Theorem \ref{ExistencePt}(i), (ii), (iii), (iv), and (v) ensure the fulfillment of assumptions (A1), (A2), (A3), (A4), and (A5), respectively, at appropriately chosen points. Considering the importance of Step 1 in a practical implementation of Algorithm \ref{algorithm 1}, the framework for guaranteeing the existence of optimal solutions and/or for solving problem \eqref{RMPCCt}, for a fixed value of $t>0$, as a Lipschitz optimization problem (see, e.g., \cite{BS2,RTW} and references therein), is crucial. Hence, for the remainder of this section, we dive deeper into the analysis of the conditions ensuring that suitable versions of assumptions (A4) and (A5) can hold; a similar analysis can be done for (A1)--(A3). In particular, mainly for illustrative purposes, we focus our attention on the requirements for the assumptions needed in Theorem \ref{ExistencePt}(iv) and Theorem \ref{ExistencePt}(v)(a) to hold; namely, for a fixed value of $t>0$, these assumptions are precisely:
\begin{description}
\item[(B1)] set-valued mapping $\mathcal{D}^{t}(\cdot)$ is B-lower semicontinuous $\bar{x}$;
\item[(B2)] the set-valued mapping $\mathcal{D}^{t}(\cdot)$ is Lipschitz-like  around $(\bar{x},\bar{y},\bar{u})$;
\item[(B3)] the set-valued mapping $\mathcal{S}_{p}^{t}(\cdot)$  is inner semicontinuous at $(\bar{x},\bar{y},\bar{u})\in \mbox{gph} \Psi_{p}^{t}$.
\end{description}
 An important  question that we are interested in is to know what assumptions can be imposed on the data of our problem \eqref{MPCC} to ensure the satisfaction of these properties in relevant neighborhoods as $t\downarrow 0$? Some answers to this question are provided in the next result. To proceed, let us consider the  set-valued mappings $\mathcal{D}(\cdot,x): t \rightrightarrows
\mathcal{D}^{t}\mathcal{(}x\mathcal{)}$ for
$x\in\mathbb{R}^{n}$ and $\mathcal{D}(\cdot,\cdot\mathcal{)}:(t,x) \rightrightarrows
\mathcal{D(}t,x\mathcal{)}:=\mathcal{D}^{t}\mathcal{(}x\mathcal{)}$, where
$\mathcal{D}^{t}\mathcal{(}x\mathcal{)}$ is defined in (\ref{Dt}) for $t>0$
and $\mathcal{D}^{0}(x)=\mathcal{D}(x)$ is given in equation \eqref{KKT system}.
\begin{theorem}\label{prop}
\label{Continuity of set-valued maps}
The following statements hold:
\begin{description}
\item[(i)] Let the set-valued mapping $\mathcal{D(\cdot)}$ be H-lower semicontinuous at $\bar{x}$ and
$\mathcal{D}(\cdot,\cdot)$ be H-upper semicontinuous at $(0^+,\bar{x})$. Then, there exists a neighborhood $U$ of the point $\bar{x}$ such that for $t\downarrow0$, the set-valued mapping $\mathcal{D}^{t}(\cdot)$ is H-lower semicontinuous at all $x\in U$.
\item[(ii)] Let $\mathcal{D(\cdot)}$ be
Lipschitz-like around $(\bar{x},\bar{y},\bar{u})\in \mbox{gph}\mathcal{D}$ and $\mathcal{D}(\cdot, x)$
be H-upper semicontinuous at $0^{+}$ for any $x$ around $\bar{x}$. Then, there
exists a neighborhood $U\times V$ of $(\bar{x},\bar{y},\bar{u})$ such that
for $t\downarrow0$, $\mathcal{D}^{t}$ is Lipschitz-like around all points
$(x,y,u)$ with $x\in U$ and $(y,u)\in V\cap\mathcal{D}^{t}(x)$.
\item[(iii)] {{Let  $\mathcal{S}_{p}(\cdot)$  be inner semicontinuous at $(\bar{x},\bar{y},\bar{u})\in \mbox{gph}\mathcal{S}_{p}$ and  $t \rightrightarrows \mathcal{S}_{p}^{t}(x)$ be H-lower semicontinuous at $0^{+}$ for any $x$ close to $\bar{x}$}}. Then, there exists a neighborhood $U\times V$ of
$(\bar{x},\bar{y},\bar{u})$ such that for $t\downarrow0$, $\mathcal{S}
_{p}^{t}\mathcal{(\cdot)}$ is inner semicontinuous at all points $(x,y,u)$
with $x\in U$ and $(y,u)\in V\cap\mathcal{S}_{p}^{t}(x)$.
\end{description}
\end{theorem}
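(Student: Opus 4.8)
The three items are all \emph{stability-under-relaxation} statements: a semicontinuity or Lipschitz-type property that the limiting multimap ($\mathcal{D}(\cdot)$ in (i)--(ii), $\mathcal{S}_{p}(\cdot)$ in (iii)) enjoys at the reference point is shown to survive, at nearby points and for $t$ small, for the Scholtes-relaxed multimaps. Throughout I would lean on Proposition~\ref{lem}: the monotonicity $\mathcal{D}^{t_{1}}(x)\subseteq\mathcal{D}^{t_{2}}(x)$ for $0<t_{1}<t_{2}$ together with $\mathcal{D}(x)\subseteq\mathcal{D}^{t}(x)$, and the convergence $\mathcal{D}^{t}(x)\to\mathcal{D}(x)$.

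For item (i), I would first note that lower semicontinuity of $\mathcal{D}(\cdot)$ at $\bar{x}$ together with the inclusion $\mathcal{D}(x)\subseteq\mathcal{D}^{t}(x)$ already forces $\mathcal{D}(\cdot,\cdot)$ to be lower semicontinuous at $(0,\bar{x})$, so that with the assumed upper semicontinuity the joint map is Hausdorff-continuous at $(0,\bar{x})$. Then, given $\varepsilon>0$, I would pick a neighborhood $[0,\bar{t})\times U$ of $(0,\bar{x})$ on which $d_{H}(\mathcal{D}^{t}(x),\mathcal{D}(\bar{x}))<\varepsilon/2$, shrinking $U$ so that also $\mathcal{D}(\bar{x})\subseteq\mathcal{D}(x')+(\varepsilon/2)\boldsymbol{B}_{\mathbb{R}^{m+q}}$ for $x'\in U$; for $x_{0},x'\in U$ and $0<t<\bar{t}$ the chain $\mathcal{D}^{t}(x_{0})\subseteq\mathcal{D}(\bar{x})+(\varepsilon/2)\boldsymbol{B}_{\mathbb{R}^{m+q}}\subseteq\mathcal{D}(x')+\varepsilon\boldsymbol{B}_{\mathbb{R}^{m+q}}\subseteq\mathcal{D}^{t}(x')+\varepsilon\boldsymbol{B}_{\mathbb{R}^{m+q}}$ then yields lower semicontinuity of $\mathcal{D}^{t}$ at $x_{0}$ with the single neighborhood $U$ (the phrase "for $t\downarrow0$" being read here with $\bar{t}$ allowed to depend on $\varepsilon$, since for fixed $t>0$ the relaxation layer $\mathcal{D}^{t}(x)\setminus\mathcal{D}(x)$ does not shrink).

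Item (iii) follows the same template, with one twist: there is no inclusion $\mathcal{S}_{p}(x)\subseteq\mathcal{S}_{p}^{t}(x)$ because the cut $F(x,y)\geq\psi_{p}^{t}(x)$ drifts with $t$, so the role of the inclusion is taken over by the hypothesis that $t\mapsto\mathcal{S}_{p}^{t}(x)$ is lower semicontinuous at $0^{+}$, which supplies $\mathcal{S}_{p}(x)\subseteq\mathcal{S}_{p}^{t}(x)+\varepsilon\boldsymbol{B}_{\mathbb{R}^{m+q}}$ for $t$ small. For $(x_{0},z_{0})$ near $(\bar{x},(\bar{y},\bar{u}))$ with $z_{0}\in\mathcal{S}_{p}^{t}(x_{0})$ and a sequence $x_{k}\to x_{0}$, I would use inner semicontinuity of $\mathcal{S}_{p}$ at $(\bar{x},(\bar{y},\bar{u}))$ to obtain $v_{k}\in\mathcal{S}_{p}(x_{k})$ with $v_{k}\to(\bar{y},\bar{u})$, hence $v_{k}$ eventually within a controlled distance of $z_{0}$, and then push $v_{k}$ by at most $\varepsilon$ into $\mathcal{S}_{p}^{t}(x_{k})$; the resulting bound on $d(z_{0},\mathcal{S}_{p}^{t}(x_{k}))$ is governed by $\varepsilon$, by $d(z_{0},(\bar{y},\bar{u}))$ and by $d((\bar{y},\bar{u}),\mathcal{S}_{p}(x_{k}))$, each made small by shrinking $V$, $U$ and $t$.

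Item (ii) is the real obstacle, because the elementary chase breaks: a point $z_{0}=(y_{0},u_{0})\in V\cap\mathcal{D}^{t}(x_{0})$ sitting in the relaxation layer (some $-u_{0i}g_{i}(x_{0},y_{0})\in(0,t]$) is not in $\mathcal{D}(x_{0})$, and pulling it back onto $\mathcal{D}(x_{0})$ before applying the Lipschitz estimate for $\mathcal{D}$ costs an additive error of order $e(\mathcal{D}^{t}(x_{0}),\mathcal{D}(x_{0}))$ that is independent of $\|x_{1}-x_{2}\|$ and so wrecks the Lipschitz bound. I would therefore switch to the Mordukhovich criterion: since $\gph\mathcal{D}^{t}$ is closed (Proposition~\ref{lem}(1)), $\mathcal{D}^{t}$ is Lipschitz-like around $(x_{0},z_{0})$ iff $D^{*}\mathcal{D}^{t}(x_{0},z_{0})(0)=\{0\}$. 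The key structural points are that $\gph\mathcal{D}^{t}$ is cut out by the same $C^{1}$ data as $\gph\mathcal{D}$ except that $-u_{i}g_{i}\leq0$ is loosened to $-u_{i}g_{i}\leq t$, and that along $\gph\mathcal{D}^{t}$ near the reference point the $i$-th Scholtes inequality can be active only when $u_{i}>0$ and $g_{i}(x,y)<0$ hold at once, so the biactive degeneracy that destroys standard constraint qualifications for $\gph\mathcal{D}$ does not occur on the layer. Using this together with the upper semicontinuity of $\mathcal{D}_{x}(\cdot)$ at $0^{+}$ (which keeps the layer thin), I would deduce from the Lipschitz-likeness of $\mathcal{D}$ at $(\bar{x},(\bar{y},\bar{u}))$, i.e.\ from $D^{*}\mathcal{D}(\bar{x},(\bar{y},\bar{u}))(0)=\{0\}$, a constraint qualification for $\gph\mathcal{D}^{t}$ at points near $(\bar{x},(\bar{y},\bar{u}))$, and then finish by a robustness/contradiction step: were $\mathcal{D}^{t}$ to fail to be Lipschitz-like along some $(x_{k},z_{k})\to(\bar{x},(\bar{y},\bar{u}))$ with $t_{k}\downarrow0$, I would pick unit vectors $w_{k}\in D^{*}\mathcal{D}^{t_{k}}(x_{k},z_{k})(0)$, pass to the limit using outer semicontinuity of the limiting normal cones along the $C^{1}$-converging constraint data, and arrive at $0\neq w\in D^{*}\mathcal{D}(\bar{x},(\bar{y},\bar{u}))(0)=\{0\}$, a contradiction. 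The step I expect to absorb most of the work is precisely this transfer of a usable constraint qualification for $\gph\mathcal{D}^{t}$ from the bare Lipschitz-likeness of the possibly non-MFCQ-regular set $\gph\mathcal{D}$.
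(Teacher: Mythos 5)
Your treatments of items (i) and (iii) are correct and coincide with the paper's own arguments: for (i), the same two-step chain (upper semicontinuity of the joint map pushes $\mathcal{D}^{t}(x)$ into $\mathcal{D}(\bar{x})+\tfrac{\varepsilon}{2}\mathbb{B}_{\mathbb{R}^{m+q}}$, lower semicontinuity of $\mathcal{D}$ at $\bar{x}$ plus the inclusion $\mathcal{D}(x')\subset\mathcal{D}^{t}(x')$ completes it), and for (iii) the same three-term triangle inequality
$d\bigl((y,u),\mathcal{S}_{p}^{t}(x')\bigr)\le\|(y,u)-(\bar{y},\bar{u})\|+d\bigl((\bar{y},\bar{u}),\mathcal{S}_{p}(x')\bigr)+e\bigl(\mathcal{S}_{p}(x'),\mathcal{S}_{p}^{t}(x')\bigr)$,
with each term controlled by shrinking $V$, $U$ and $t$.

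Item (ii) is where you diverge, and your replacement does not close. First, the obstacle you flag --- that pulling $(z,v)\in\mathcal{D}^{t}(w)$ back onto $\mathcal{D}(w)$ costs an additive error unrelated to $\|w-w'\|$ --- is exactly what the paper confronts, and it resolves it \emph{inside} the elementary chase: the upper semicontinuity of $\mathcal{D}_{x}(\cdot)$ at $0^{+}$ permits choosing the pull-back tolerance $\varepsilon<\min\bigl(\tfrac{r}{3},\|w-w'\|\bigr)$, so the error is absorbed into the bound $(1+\tau)\|w-w'\|$; the price is that the admissible range $(0,\alpha)$ of $t$ then depends on $\varepsilon$, hence on the pair $w,w'$, a quantifier looseness that the theorem's ``for $t\downarrow0$'' phrasing tolerates. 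Second, your coderivative route has a genuine gap at its central step: from unit vectors $w_{k}\in D^{*}\mathcal{D}^{t_{k}}(x_{k},z_{k})(0)$ you cannot ``pass to the limit using outer semicontinuity of the limiting normal cone,'' because the sets $\gph\mathcal{D}^{t_{k}}$ vary with $k$. Robustness of the limiting normal cone holds for a \emph{fixed} closed set; it fails in general under set convergence $\gph\mathcal{D}^{t_{k}}\to\gph\mathcal{D}$. To transfer normals across the converging graphs you would need multiplier representations of $D^{*}\mathcal{D}^{t_{k}}$ valid uniformly in $k$, i.e.\ a constraint qualification for the systems defining $\gph\mathcal{D}^{t_{k}}$ holding uniformly near $(\bar{x},(\bar{y},\bar{u}))$ --- which is essentially the stability you set out to prove, so the argument as sketched is circular. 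Your observation that the Scholtes inequality is active only where $u_{i}>0$ and $g_{i}(x,y)<0$ simultaneously is a sensible ingredient for building such a CQ, but it is not developed, and the paper's direct excess estimate is both shorter and actually reaches the stated conclusion.
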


\begin{proof}
For (i), let $\varepsilon>0.$ Since $\mathcal{D}(\cdot,\cdot\mathcal{)}$ is
H-upper semicontinuous at $(0^+,\bar{x})$, there exist $\alpha>0$ and $r_{1}>0$ such that
for any $t\in(0,\alpha)$ and $x\in B(\bar{x},r_{1})$, one has
\[
\mathcal{D}^{t}(x)\subset D(\bar{x})+
\frac{\varepsilon}{2}\mathbb{B}_{m+q}
\]
and since $\mathcal{D(\cdot)}$ is H-lower semicontinuous at $\bar{x}$, there
exists $r_{2}>0$ such that for any $x^{\prime}$ in $B(\bar{x},r_{2})$,
\[
\mathcal{D}(\bar{x}) \;\; \subset \;\;  \mathcal{D}(x^\prime) + \frac{\varepsilon}{2}\mathbb{B}_{m+q}
 \;\; \subset\;\; \mathcal{D}^t(x^\prime) + \frac{\varepsilon}{2}\mathbb{B}_{m+q} \;\; \mbox{ for all } \; t>0.
\]
Hence, there exists a neighborhood $U$ of $\bar{x}$ such that for any
$x,x^{\prime}\in U$ and for any $t\in(0,\alpha)$,
\[
\mathcal{D}^{t}(x) \;\; \subset\;\; \mathcal{D}^{t}(x^{\prime})\; +\; \varepsilon\mathbb{B}_{m+q}.
\]
This means that $\mathcal{D}^{t}(\cdot)$ is H-lower semicontinuous at any $x\in U$.

For (ii), note that as $\mathcal{D}$ is Lipschitz-like around $(\bar{x},\bar{y},\bar
{u})\in\mbox{gph}\,\mathcal{D}(\cdot)$, there are strictly positive numbers $\tau$, $\delta_{1}$, and $r>0$ such that for
any $x,x^{\prime}\in B(\bar{x},\delta_{1})$, we have
\[
\mathcal{D}(x)\cap B((\bar{y},\bar{u}),r)\subset\mathcal{D}(x^{\prime}
)+\tau\left\Vert x-x^{\prime}\right\Vert \mathbb{B}_{m+q}
\]
and so
\begin{equation}
\mathcal{D}(x)\cap B((\bar{y},\bar{u}), \,r)\subset\mathcal{D}^{t}(x^{\prime
})+\tau\left\Vert x-x^{\prime}\right\Vert \mathbb{B}_{m+q}\label{Lipb}
\end{equation}
for all $t>0$. On the other hand, as $t \rightrightarrows \mathcal{D}^{t}(x)$ is
H-upper semicontinuous at $0^{+}$ for all $x$ around $\bar{x}$, then there exists $\delta_{2}>0$ so that  for any $x\in B(\bar{x},\delta_{2})$  and
$\varepsilon >0 $ there is a
constant $\alpha>0$  such that
\begin{equation}\label{us}
\mathcal{D}^{t}(x)\subset\mathcal{D}(x)+\varepsilon\mathbb{B}_{m+q} \;\, \mbox{ for all }\;\,  t\in(0,\alpha).
\end{equation}
{{Take $U := B\left(\bar{x},\,\dfrac{\delta}{2}\right)$ with $\delta=\min(\delta_{1},\delta_{2})$ and $V:=B\left((\bar{y},\bar{u}),\dfrac{r}{3}\right).$  Fix $x\in U $  and
$(y,u)\in V\cap\mathcal{D}^{t}(x).$ Let $\tilde U$ be a
neighborhood of $x$ such that $\tilde U\subset B(\bar{x},\delta)$ and $w,w^{\prime
}\in \tilde U$ with $w\neq w^{\prime}$. Given $0 <\varepsilon < \min(\dfrac{r}{3} ,\left\Vert w-w^{\prime}\right\Vert
)$, there exists some
$\alpha>0$ such that (\ref{us}) holds.  Taking $t\in(0,\alpha)$ as well as $(z,v)\in\mathcal{D}^{t}(w)\cap
B\left((y,u),\dfrac{r}{3}\right)$, one can pick}}
$(\hat{y},\hat{u})\in\mathcal{D}(w)$ such that
\[
\left\Vert (z,v)-(\hat{y},\hat{u})\right\Vert <\varepsilon,
\]
so that $(\hat{y},\hat{u})\in\mathcal{D}(w)\cap B((\bar{y},\bar{u}),r).$ Now,
from (\ref{Lipb}), it holds that
\[
(\hat{y},\hat{u})\in\mathcal{D}(w)\cap B((\bar{y},\bar{u}),r)\subset
\mathcal{D}^{t}(w^{\prime})+\tau\left\Vert w-w^{\prime}\right\Vert
\mathbb{B}_{m+q}.
\]
Therefore, there exists $\left(z^{\prime}, \;\, v^{\prime})\in\mathcal{D}^{t}(w^{\prime}\right)$ such
that the inequality
\[
\left\Vert (\hat{y},\hat{u})-(z^{\prime},v^{\prime})\right\Vert \leq
\tau\left\Vert w-w^{\prime}\right\Vert
\]
is satisfied. Hence, we have
\begin{align*}
\left\Vert (z,v)-(z^{\prime},v^{\prime})\right\Vert  & \leq\left\Vert
(z,v)-(\hat{y},\hat{u})\right\Vert +\left\Vert (\hat{y},\hat{u})-(z^{\prime
},v^{\prime})\right\Vert \\
& <\varepsilon+\tau\left\Vert w-w^{\prime}\right\Vert <(1+\tau)\left\Vert
w-w^{\prime}\right\Vert
\end{align*}
and so
\[
d((z,v),\mathcal{D}^{t}(w^{\prime}))\leq\left\Vert (z,v)-(z^{\prime}
,v^{\prime})\right\Vert <(1+\tau)\left\Vert w-w^{\prime}\right\Vert .
\]
Taking the supremum over $\mathcal{D}^{t}(w)\cap B\left((y,u),\dfrac{r}{3}\right)$
it yields
\[
e\left(\mathcal{D}^{t}(w)\cap B\left((y,u),\frac{r}{3}\right), \; \mathcal{D}^{t}(w^{\prime}
)\right)\leq(1+\tau)\left\Vert w-w^{\prime}\right\Vert \;\; \mbox{ for all } \;\; t\in(0,\alpha).
\]

Finally, for (iii), consider $\varepsilon>0$ and $\gamma>0$ arbitrarily such that $\gamma
<\dfrac{\varepsilon}{3}$. Since $\mathcal{S}_{p}$ is inner semicontinuous at
$(\bar{x},\bar{y},\bar{u})$ with $(\bar{y},\bar{u})\in\mathcal{S}_{p}
(\bar{x})$, there exists $r_{1}>0$ such that
\[
d\left((\bar{y},\bar{u}),\; \mathcal{S}_{p}(x)\right)<\gamma \;\; \mbox{ for all } \;\; x\in B(\bar{x},r_{1}).
\]
From the Hausdorff-lower semicontinuity  of
$\mathcal{S}_{p}^{t}(x)$ at $0^{+}$ for any $x$ in $B(\bar{x},r_{2})$
($r_{2}>0$), we have
\[
e(\mathcal{S}_{p}(x),\;\mathcal{S}_{p}^{t}(x))<\gamma.
\]
Fix now $x\in B(\bar{x},r)$ with $0<r<\min(r_{1},r_{2})$ and $(y,u)\in
V\cap\mathcal{S}_{p}^{t}(x)$ where $V:=B((\bar{y},\bar{u}),\;\gamma)$. Let
$x^{\prime}\in B(x,r^{\prime})$ with $0<r^{\prime}<r-\left\Vert x-\bar
{x}\right\Vert $ so that $x^{\prime}\in B(\bar{x},\;r)$ and thus
\[
d\left((y,u),\;\mathcal{S}_{p}^{t}(x^{\prime})\right)  \;\, \leq \;\,
\left\Vert (y,u)-(\bar{y},\bar{u})\right\Vert +d((\bar{y},\bar{u}),\;\mathcal{S}_{p}(x^{\prime  })) + e(\mathcal{S}_{p}(x^{\prime}),\;\mathcal{S}_{p}^{t}(x^{\prime}))
\; < \; 3 \; \gamma \; <\; \varepsilon
\]
and the conclusion follows. \hfill \qed
\end{proof}
\begin{corollary}\label{CorollaryF} The following assertions are satisfied:
\begin{description}
\item[(i)] Let the set-valued mapping $\mathcal{D(\cdot)}$ be H-lower semicontinuous at $\bar{x}$, while the set-valued mapping
$\mathcal{D}(\cdot,\cdot)$ is H-upper semicontinuous at $(0^+,\bar{x})$. Then, there exists a neighborhood $U$ of the point $\bar{x}$ such that for $t\downarrow0$, the function $x \mapsto \psi_{p}^{t}(x)$ is lower semicontinuous at all $x^t\in U$. 
\item[(ii)] Suppose that the set-valued mapping $\mathcal{D(\cdot)}$ is Lipschitz-like around $(\bar{x},\bar{y},\bar{u})\in \mbox{gph}\mathcal{D}$ and $\mathcal{D}(\cdot, x)$ is H-upper semicontinuous at $0^{+}$ for any $x$ around $\bar{x}$.
Furthermore, let  $\mathcal{S}_{p}(\cdot)$ be inner semicontinuous at the point $(\bar{x},\bar{y},\bar{u})\in \mbox{gph}\mathcal{S}_{p}$ and $t \rightrightarrows \mathcal{S}_{p}^{t}(x)$ be H-lower semicontinuous at $0^{+}$ for any $x$ close to $\bar{x}$.
Then, there exists a neighborhood $U$ of $\bar{x}$ such that for $t\downarrow0$, the function $x \mapsto \psi_{p}^{t}(x)$ is Lipschitz continuous around all $x^t\in U$.
\end{description}
\end{corollary}
\begin{proof}
Assertion (i) follows from a combination of Theorem \ref{prop}(i), Theorem \ref{ExistencePt}(iv), and  Theorem \ref{Lemma223}(i), while (ii) results from combining Theorem \ref{ExistencePt}(v)(a), Theorem \ref{prop}(ii), and Theorem \ref{prop}(iii).
\end{proof}


\begin{example}
Let us show that all the assumptions in Theorem \ref{prop} (i.e., in Corollary \ref{CorollaryF} as well) hold for the problem in Example \ref{ex4}. First, one can check that all hypotheses  are fulfilled at the optimal solution $\bar{x} =-1 $.  Clearly,  the set-valued mapping
$\mathcal{D}(\cdot)$  is  Lipschitz continuous around $\bar{x}$ since for any $x$, $x^{\prime }$ around $\bar{x}$, one has
\begin{equation*}\label{liplik}
e(\mathcal{D}(x),\mathcal{D}(x^{\prime }))=\left\vert x-x^{\prime
}\right\vert.
\end{equation*}
Hence, the set-valued mapping $\mathcal{D}(\cdot)$ is H-lower semicontinuous at $\bar{x}$. Next, let us show that $\mathcal{D}(\cdot,\cdot)$ is H-upper semicontinuous at $(0^+,\bar{x})$. Given $\varepsilon>0$, choose $\delta >0$  such that $U:= \left[ -1,\delta -1 \right[ \subset \left [-1,0\right[$. Taking $x\in U $ and $(y,u)\in \mathcal{D}^{t}(x) $ with $t>0$ such that $x<-t$,  we have
\[
(y,u):=(y,(u_{1},u_{1}-x))=(1,(0,-x))+(y-1,(u_{1},u_{1}))
\]
with the string of conditions
\begin{equation*}
        \left(0 \leq u_{1}<t\wedge -\dfrac{t}{u_{1}-x}\leq y-1 \leq  0\right)\vee  \left(t   \leq u_{1}\leq \bar{u}^t(x)\wedge-\dfrac{t}{u_{1}-x}\leq
y-1\leq\dfrac{t}{u_{1}}-1\right).
\end{equation*}
Since  $ \bar{u}^t(x)\leq2t$ and $u_{1}-x\geq -x > 1-\delta$, it follows that
\[
\left| u_{1}\right| \leq2t\,\,\wedge\,\, \left| y-1\right|\leq\dfrac{t}{1-\delta}.
\]
Thus for $0<t<\min\,\left((1-\delta) \varepsilon, \; \dfrac{\varepsilon}{2}\right)$,
$(y-1,\,u_{1},\,u_{1})\in\varepsilon \mathbb{B}_{3}$ so that
$$(y-1,\,u_{1},\,u_{1}-x) \in \mathcal{D}(-1)+\varepsilon \mathbb{B}_{3}.$$
 Hence for any $t$ in a neighborhood of $0^{+}$ and $x\in U$,
\[
\mathcal{D}^{t}(x)\subset\mathcal{D}(-1)+\varepsilon \mathbb{B}_{3}.
\]
This means that the set-valued mapping $\mathcal{D}(\cdot,\cdot):(t,x) \rightrightarrows \mathcal{D}^{t}(x)$ is H-upper semicontinuous at $(0^+,\bar{x})$ and so is for  the set-valued mapping $\mathcal{D}(\cdot,x): t \rightrightarrows \mathcal{D}^{t}(x)$ at $0^{+}$. Thus, the properties (i) and (ii) in Theorem \ref{prop} hold.

Now for  Theorem \ref{prop}(iii), we have
 $\mathcal{S}_{p}(0)=\left\{(1,0,0)\right\}$  and for   $x\neq 0$, $$\mathcal{S}_{p}(x)=\mathcal{D}(x)=\left\{
\begin{array}{lll}
\left\{(0,\,x,\,0)\right\}         & \text{if} & 0<x \leq 1,\\[1ex]
\left\{(1,\, 0,\, -x)\right\}  & \text{if} & -1\leq x<0.
\end{array}
\right.$$
Obviously,  		 $\mathcal{S}_p(\cdot)$ is  Lipschitz continuous around $\bar{x}$ and thus  it is inner semicontinuous at any ($\bar{x},\bar{y},\bar{u}) \in  \mbox{gph}\mathcal{S}_{p} $.
On the other hand, for $x\in\left[  -1,0\right]$,
\[
\mathcal{S}^{t}_p(x)=
\left\{  (1,\,u_{1},\,u_{1}-x)\left|\;\, 0\leq u_{1}\leq t\right.\right\},
\]
 while for $0<x\leq 1$, we have
\begin{equation*}
\mathcal{S}^{t}_p(x)=\left\{\begin{array}{lll}
\left\{\left(\dfrac{t}{x},\,x,\,0\right)\right\}               & \mbox{ if } & 0 < t\leq x ,\\
\left\{(1,\,u_{1},\,u_{1}-x)| \;\; x\leq u_{1}\leq t \right\}  & \mbox{ if } &x\leq t.
                          \end{array}\right.
\end{equation*}
So the set-valued mapping $t \rightrightarrows \mathcal{S}^{t}_{p}(x)$ is   H-lower  semicontinuous  at $0^+$ for any $x$ close to  $\bar{x}$. Indeed, since for any $x\in\left[-1,    0\right[$   and  $0<t<-x$, we have
\[
(1,\,0,\,-x)=\left(1,\,t,\,t-x\right)- \left(0,\,t,\,t\right)\in \mathcal{S}^{t}_{p}(x)+t \mathbb{B}_{3},
\]
it follows that for all $\varepsilon>0$ and  $0<t<\varepsilon$, $\mathcal{S}_{p}(x)\subset\mathcal{S}^{t}_{p}(x)+\varepsilon \mathbb{B}_{3}$.
 \hfill \qed
\end{example}

To conclude this section, let us have a general look at specific problem data requirements for \eqref{MPCC} to ensure the fulfillment of the assumptions imposed in Corollary \ref{CorollaryF}. This will give (see remark below)  specific frameworks based on problem data in \eqref{MPCC} such that the following assumptions are satisfied: 
\begin{description}
\item[(C1)]  $\mathcal{D(\cdot)}$ is H-lower semicontinuous at $\bar{x}$;
\item[(C2)]  $\mathcal{D}(\cdot,\cdot)$ is H-upper semicontinuous at $(0^+,\bar{x})$;
\item[(C3)]  $\mathcal{D(\cdot)}$ is Lipschitz-like around $(\bar{x},\bar{y},\bar{u})\in \mbox{gph}\mathcal{D}$;
\item[(C4)] $\mathcal{D}(\cdot, x)$ is H-upper semicontinuous at $0^{+}$ for any $x$ around $\bar{x}$;
\item[(C5)] $\mathcal{S}_{p}(\cdot)$ is inner semicontinuous at the point $(\bar{x},\bar{y},\bar{u})\in \mbox{gph}\mathcal{S}_{p}$;
\item[(C6)] $t \rightrightarrows \mathcal{S}_{p}^{t}(x)$ is H-lower semicontinuous at $0^{+}$ for any $x$ close to $\bar{x}$.
\end{description}

\begin{remark} We have the following scenarios for the fulfillment of assumptions (C1), \ldots, (C6):\\
{\bf (C1) and (C3)}: Observe that $(y,u)\in \mathcal{D}(x)$ if and only if $0\in \Phi(x,y,u)$ with
\[
 \Phi(x, y, u):=\phi(x, y, u) + \K \;\,\mbox{ with }\;\, \phi(x, y, u):= \left(\begin{array}{c}
                                                                                      \mathcal{L}(x, y, u)\\
                                                                                      -u\\
                                                                                      g(x, y)
                                                                                                \end{array}\right)  \;\,\mbox{ and }\;\, \K:=\left\{0_m\right\}\times \Lambda,
\]
where
$
\Lambda:=\left\{(a, b)\in \mathbb{R}^{2q}\left|\; a\geq 0,\;  b\geq 0, \; a^\top b=0\right.\right\}.
$
From \cite[Theorem 9.43 and Example 9.44]{RTW}, if
 the condition
\[
\nabla \phi(\bar{x},\bar{y},\bar{u})^{\top}v =0, \;\; v\in N_{\K}(-\phi(\bar{x},\bar{y},\bar{u}))\Longrightarrow v = 0
\]
holds at $(\bar{x},\bar{y},\bar{u})$ for any $(\bar{y},\bar{u})\in \mathcal{D}(\bar{x})$, where
   $N_{K}$ denotes the limiting normal cone to $K$, then there exist $\tau \geq 0$, $r>0$, and $\delta >0$ such that for any $x\in B(\bar{x},r)$ and any $(y,u)\in B((\bar{y},\bar{u}),\delta)$, one has
\begin{equation*}
d((y,u),\mathcal{D}(x))\leq \tau d(0,\Phi (x,y,u)).
\end{equation*}
Hence, for all  $x\in B(\bar{r},r)$, it holds that
\begin{eqnarray}
d((\bar{y},\bar{u}),\mathcal{D}(x)) &\leq &\tau d(-\phi(x,\bar{y},\bar{u}),K) \nonumber \\
&\leq &\tau \left\Vert \phi(\bar{x},\bar{y},\bar{u})-\phi(x,\bar{y},\bar{u})\right\Vert
\nonumber
\end{eqnarray}
and as $\phi$ is Lipschiz continuous around $(\bar{x},\bar{y},\bar{u})$ (since it is continuously differentiable), we get
$$d((\bar{y},\bar{u}),\mathcal{D}(x)) \leq  \tau l \left\Vert x-\bar{x} \right\Vert. $$
We conclude that $\mathcal{D}(\cdot )$ is H-lower
semicontinuous at $\bar{x}.$ Indeed, let $\varepsilon >0$,  then for all $x\in B(\bar{x}, r)$ such that $ l\tau \left\Vert x-\bar{x} \right\Vert < \varepsilon$,  one has
$
d((\bar{y},\bar{u}),\mathcal{D}(x)) \leq  \varepsilon
$
for any arbitrary  $(\bar{y},\bar{u}) \in \mathcal{D}(\bar{x})$. The conclusion follows by taking the upper bound over $\mathcal{D}(\bar{x})$.
Moreover,  $\Phi$ is obviously inner semicontinuous at $(\bar{x}, \bar{y},\bar{u}, 0)$ and  Lipschitz-like with respect to $x$ uniformly in $(y,u)$ around $(\bar{x}, \bar{y},\bar{u}, 0)$ ($\phi$ being Lipschitz continuous with respect to $x$ uniformly in $(y,u)$) then from Theorem 3.6 in \cite{Du}, the set-valued mapping $\mathcal{D}(\cdot)$ is
Lipschitz-like around $(\bar{x}, \bar{y},\bar{u})$.\\[2ex]
{\bf (C2) and (C4)}: Recall that in order to prove that a set-valued mapping $\Psi :\mathbb{R}^n \rightrightarrows \mathbb{R}^m$ defined by inequalities is  B (H)-upper semicontinuous at a point $w_0\in \mathrm{dom}\Psi$, in general we assume that there exists a compact set $C\subset \mathbb{R}^n$ such that for all parameters $w$ in a certain neighborhood
$W$ of $w_0$, the sets $\Psi(w)$ are contained in $C$. In our case, all subsets $\mathcal{D}(t,x)$ are closed so that, assuming the fulfillment of the uniform boundedness around $(0^{+},\bar{x
})$, i.e., the existence of $r$, $\alpha$, and $\gamma >0$ such that
\begin{equation}
\forall (t,x)\in \left[ 0,\alpha \right[ \times B(\bar{x},r):\mathcal{D}
(t,x)\subset \gamma \mathbb{B}_{m+q},  \label{bound}
\end{equation}
the set-valued mapping $t \rightrightarrows  \mathcal{D}(t,x)$ is H-upper
semicontinuous at $0^{+}$ for any $x$ around $\bar{x}$ and the set-valued
mapping $(t,x) \rightrightarrows  \mathcal{D}(t,x)$ is H-upper semicontinuous at $
(0^{+},\bar{x})$. Indeed, by continuity of all functions data, the set-valued
mapping $\mathcal{D}(\cdot ,x)$ (resp. $\mathcal{D}(\cdot ,\cdot )$) is
outer semicontinuous at $0^{+}$ (resp.  $(0^{+},\bar{x})$) for any $x$.
Hence by assumption (\ref{bound}),  $\mathcal{D}(\cdot ,x)$ (resp. $\mathcal{D}(\cdot ,\cdot )$) is B-upper semicontinuous at $0^{+}$ (resp. $(0^{+},\,\bar{x})$) for any $x$ (see \cite[Theorem 5.19]{RTW})
and so it is H-upper semicontinuous too from Theorem 2.1(ii).\\[2ex]
{\bf (C5)}:  Among the most important conditions ensuring the property of inner semiontinuity of $\mathcal{S}_{p}$ at $(\bar{x},\bar{y},\bar{u})$, let us first mention the Robinson condition \cite{Robinson1} for the lower-level problem \eqref{S(x)}. Note that the Robinson condition will be said to hold at $(\bar{x},\bar{y},\bar{u})$ if a strong second order sufficient condition is satisfied at this point and the linear independence constraint qualification holds at $\bar y$ for the lower-level feasible set for $x:=\bar x$. Clearly, if the Robinson condition holds at $(\bar{x},\bar{y},\bar{u})$, then $\mathcal{S}_{p}(\bar x) = \mathcal{D}(\bar x) = \{(\bar y, \bar u)\}$. As second option, $\mathcal{S}_{p}$ is inner semicontinuous at $(\bar{x},\bar{y},\bar{u})$ if this set-valued mapping is Lipschitz-like around $(\bar{x},\bar{y},\bar{u})$. Results ensuring Lipschitz-like behavior of solution
maps based on the coderivative criterion and the appropriate
second-order subdifferential of nonsmooth functions can be found in the book by Mordukhovich \cite[Chapter 4]{BS2} and the references therein.\\[2ex]
 {\bf (C6)}: In general it is difficult to check the H-lower semicontinuity of an optimal solution set-valued mapping, i.e., that for any $x$ around $\bar{x}$ and any $\varepsilon >0$ there exists $\alpha >0$, such that for all $ t\in (0,\alpha)$, one has
 \[
 \mathcal{S}_{p}(x)\subset \mathcal{S}^{t}_{p}(x)+\varepsilon \mathbb{B}_{m+q},
 \]
 without knowing the analytical description of the set-valued mapping $\mathcal{S}_{p}(x)$. However, this property is satisfied (see \cite[Theorem 1]{Zhao}
 and \cite[Theorem 2.1]{Kien}) whenever the set-valued mapping $\mathcal{D}(\cdot,x)$ is H-lower semicontinuous at $0^+$ around $\bar{x}$ (which is satisfied since for any $x$, $\mathcal{D}(0,x)=\mathcal{D}(x)\subset \mathcal{D}(t,x) \,\mbox{ for all }\, t>0$) and the set-valued mapping $t \rightrightarrows \mathcal{S}^{t}_{p}(x)$ is uniformly closed and bounded (nonempty) near $t=0^+$ around $\bar{x}$ as well as the following condition holds for any $x$ around $\bar{x}$: for any $\varepsilon>0$, there exist $\alpha, \delta >0$ such that for all $t\in (0,\alpha)$ and any $(y,u)\in \mathcal{D}^{t}(x)$ satisfying $d((y,u),\mathcal{S}^{t}_{p}(x))>\varepsilon$, $F(x,y)\leq \psi^{t}_{p}(x) -\delta$.
\end{remark}
\section{Computing C-stationary points}\label{Computing C-stationary points}
Our basic assumption in this section is that at Step 1 of Algorithm \ref{algorithm 1}, we are computing stationary points of \eqref{RMPCCt} for $t:=t_k$, with the aim to construct a framework ensuring that the resulting sequence converges to a C-stationary point of \eqref{MPCC}. To proceed, we first introduce the C-stationarity concept of the latter problem. 
\begin{definition}\label{C-concept}
A point $\bar{x}$ will be said to be C-stationary point for \eqref{PBP} if there exists a vector $\left(\bar{y},\bar{u}, \alpha,\beta,\gamma\right)$ such that we have the relationships
\begin{eqnarray}
(\bar{x}, \bar{y},\bar{u})\in \mbox{gph}\,\mathcal{S}_{p},\label{St0}\\
\nabla_{x}F(\bar{x},\bar{y})+\sum\limits_{i=1}^{p}\alpha_{i}\nabla G_{i}
(\bar{x})+\sum\limits_{l=1}^{m}\beta_{l}\nabla_{x}\mathcal{L}_{l}(\bar{x}
,\bar{y},\bar{u})+\sum\limits_{i=1}^{q}\gamma_{i}\nabla_{x}g_{i}(\bar{x}
,\bar{y})=0,\label{St3}\\
\nabla_{y}F(\bar{x},\bar{y})+\sum\limits_{l=1}^{m}\beta_{l}\nabla
_{y}\mathcal{L}_{l}(\bar{x},\bar{y},\bar{u})+\sum\limits_{i=1}^{q}\gamma
_{i}\nabla_{y}g_{i}(\bar{x},\bar{y})=0,\label{St4}\\[1ex]
\forall j=1, \ldots, p:\;\; \alpha_j\geq0,\;\; G_j(\bar{x})\leq 0, \;\; \alpha_jG_j(\bar{x})=0,\label{St5}\\[1.5ex]
\forall i\in \nu:\;\, \sum\limits_{l=1}^{m}\beta_{l}\nabla_{y_{l}}g_{i}(\bar{x},\bar{y})=0, \;\;  \forall i\in \eta:\;\,\gamma_i=0,\label{St6}\\
\forall i\in\theta: \;\;\, \gamma_{i}\sum\limits_{l=1}^{m}\beta_{l}\nabla_{y_{l}}g_{i}(\bar{x},\bar{y})\geq0,\label{St8}
\end{eqnarray}
where
the index sets $\eta$, $\theta$, and $\nu$ are respectively defined as follows:
\[
\begin{array}{rlrll}
\eta &  := &\eta(\bar{x},\bar{y},\bar{u}) &:= &\left\{  i\in\left\{1,...,q\right\}\left|\;\; \bar{u}_{i}=0,\text{ }g_{i}(\bar{x},\bar{y})<0\right.\right\},\\
\theta &  := &\theta(\bar{x},\bar{y},\bar{u}) &:= &\left\{i\in\left\{1,...,q\right\}\left|\;\;\bar{u}_{i}=0,\text{ }g_{i}(\bar{x},\bar{y})=0\right.\right\},\\
\nu &  := &\nu(\bar{x},\bar{y},\bar{u}) &:=& \left\{  i\in\left\{  1,...,q\right\}\left|\;\; \bar{u}_{i}>0,\text{ }g_{i}(\bar{x},\bar{y})=0\right.\right\}.
\end{array}
\]
\end{definition}
Similarly, we can define the M- and S-stationarity by replacing condition \eqref{St8} by
\[
\begin{array}{c}
\forall i\in\theta: \;\;\, (\gamma_{i}<0\wedge\sum\limits_{l=1}^{m}\beta
_{l}\nabla_{y_{l}}g_{i}(\bar{x},\bar{y})<0)\vee\gamma_{i}\sum\limits_{l=1}%
^{m}\beta_{l}\nabla_{y_{l}}g_{i}(\bar{x},\bar{y})=0\\
\left(\mbox{resp. }\;\, \forall i\in\theta:\;\;\, \gamma_{i}\leq0\wedge\sum\limits_{l=1}^{m}%
\beta_{l}\nabla_{y_{l}}g_{i}(\bar{x},\bar{y})\leq0\right).
\end{array}
\]
Obviously, the following string of implications is obtained directly from these definitions:
\[
S\text{-stationarity }\Longrightarrow\text{ }M\text{-stationarity
}\Longrightarrow\text{ }C\text{-stationarity.}
\]
These concepts were introduced in \cite{D2} in the context of \eqref{PBP}. Further stationarity concepts for pessimistic bilevel optimization and relevant details can be found in the latter reference. As for optimistic bilevel programs and the closely related MPCCs, see \cite{DempeZemkohoKKTRefNonsmooth,DempeZemkohoKKTRef,HoheiselEtAlComparison2013} and references therein.

Next, we recall a result, which provides a framework ensuring that a local optimal solution of problem \eqref{PBP} satisfies the C-stationarity conditions. To proceed, we need to introduce some assumptions. First, the upper-level regularity condition will be said to hold at $\bar{x}$ if
\begin{equation}\label{UMFCQ}
\left[\alpha\geq 0, \;\;\; \alpha^\top G(\bar{x})=0,\;\;\; \nabla G(\bar{x})^\top \alpha =0\right] \;\;\;\Longrightarrow  \;\;\; \alpha =0.
\end{equation}
Obviously, this regularity concept corresponds to the Mangasarian–Fromovitz constraint qualification for the upper-level constraint. For the remaining qualification conditions, we introduce the C-qualification conditions {{at the point $(\bar{x},\bar{y}, \bar{u})$, which are defined by}}
\begin{align*}
(\beta,\gamma)  &  \in\Lambda^{ec}(\bar{x},\bar{y},\bar{u},0)\implies
\beta=0,\;\; \gamma=0,\medskip \tag{$A_{1}^{c}$}\\
(\beta,\gamma)  &  \in\Lambda_{y}^{ec}(\bar{x},\bar{y},\bar{u},0)\implies
\nabla_{x}\mathcal{L}(\bar{x},\bar{y},\bar{u})^{T}\beta+\nabla_{x}g(\bar
{x},\bar{y})^{T}\gamma=0,\medskip \tag{$A_{2}^{c}$}\\
(\beta,\gamma)  &  \in\Lambda_{y}^{ec}(\bar{x},\bar{y},\bar{u},0)\implies
\beta=0,\;\;\gamma=0, \tag{$A_{3}^{c}$}\text{ }
\end{align*}
with the C-multiplier set $\Lambda^{ec}(\bar{x},\bar{y},\bar{u},0)$ resulting from setting $v=0$ in
\begin{equation}\label{C-multipliers set}
\Lambda^{ec}(\bar{x},\bar{y},\bar{u},v) := \left\{(\beta,\gamma)\in\mathbb{R}^{m+q}\left|\;\;\begin{array}{l}
                                                                                          \nabla_{y}g_{\nu}(\bar{x},\bar{y})\beta=0,\;\; \gamma_{\eta}=0\\
                                                                                          \forall i\in\theta: \;\; \gamma_{i}\left(\nabla_{y}g_{i}(\bar{x},\bar{y})\right)\beta\geq 0\\
                                                                                          v+\nabla_{x,y}\mathcal{L}(\bar{x},\bar{y},\bar{u})^{\top}
                                                                                          \beta + \nabla g(\bar{x},\bar{y})^{\top}\gamma=0
                                                                                         \end{array}
   \right.\right\},
\end{equation}
while $\Lambda_{y}^{ec}(\bar{x},\bar{y},\bar{u},0)$ can be obtained by replacing the derivatives w.r.t. $(x, y)$ in the last equation in the set \eqref{C-multipliers set} by the derivatives of the same functions w.r.t. $y$ only. Obviously,
\[
(A_{1}^{c})\impliedby(A_{3}^{c})\implies(A_{2}^{c}).
\]
Similarly to the $M$- and $S$-stationarity concepts introduced at the beginning of this section, we can define $M$- and $S$-qualification conditions.
{{
\begin{theorem}[\cite{D2}]Let $\bar{x}$ be a local optimal solution for problem
\eqref{PBP}. Suppose that $\bar{x}$ is upper-level regular and  $\mathcal{S}_p$ is inner
semicontinuous at $(\bar{x},\bar{y},\bar{u})$, where the conditions $(A_{1}^{c})$ and
$(A_{2}^{c})$ are also assumed to hold. Then there exists $\left(\alpha,\beta,\gamma\right)$  such that the optimality conditions \eqref{St0}--\eqref{St8} are satisfied.
\end{theorem}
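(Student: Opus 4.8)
The plan is to reproduce the argument of \cite{D2}: recast \eqref{PBP} as a single-level minimization problem over (a piece of) $\gph\,\mathcal{S}_p$, apply the generalized Fermat rule together with the limiting subdifferential/normal-cone calculus, and then estimate the normal cone of $\gph\,\mathcal{S}_p$ at $(\bar x,\bar y,\bar u)$ in terms of the data of \eqref{PBP}. First I would record the reduction. Since the lower-level problem is convex and \eqref{SCQ} holds, the $y$-projection of $\mathcal{D}(x)$ equals $S(x)$ for every $x$, so $\psi_p\equiv\varphi_p$ and $\bar x$ is a local minimizer of $\psi_p$ over $X$; moreover, as $(\bar y,\bar u)\in\mathcal{D}(\bar x)$ gives $F(\bar x,\bar y)\le\psi_p(\bar x)$ while $(\bar x,\bar y,\bar u)\in\gph\,\mathcal{S}_p$ gives the reverse inequality, we get $F(\bar x,\bar y)=\psi_p(\bar x)$, so \eqref{St0} holds by hypothesis. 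For $(x,y,u)$ near $(\bar x,\bar y,\bar u)$ with $G(x)\le 0$ and $(y,u)\in\mathcal{S}_p(x)$, the same squeeze gives $F(x,y)=\psi_p(x)\ge\psi_p(\bar x)=F(\bar x,\bar y)$; hence $(\bar x,\bar y,\bar u)$ is a local minimizer of the smooth map $(x,y,u)\mapsto F(x,y)$ over $\{(x,y,u):G(x)\le 0\}\cap\gph\,\mathcal{S}_p$, and this step needs neither inner semicontinuity nor $(A_1^c)$, $(A_2^c)$.

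Next I would apply the Fermat rule and the intersection/sum rule of limiting calculus to this single-level problem. The upper-level regularity \eqref{UMFCQ} is exactly the Mangasarian--Fromovitz condition yielding $N_{\{G\le 0\}}(\bar x)=\{\nabla G(\bar x)^\top\alpha:\alpha\ge 0,\ \alpha^\top G(\bar x)=0\}$, which produces the multiplier $\alpha$ of \eqref{St5}; jointly with $(A_1^c)$ it serves as the constraint qualification under which the calculus decouples the two sets. The outcome is the inclusion $\big(-\nabla_x F(\bar x,\bar y)-\nabla G(\bar x)^\top\alpha,\,-\nabla_y F(\bar x,\bar y),\,0\big)\in N_{\gph\,\mathcal{S}_p}(\bar x,(\bar y,\bar u))$, the last slot vanishing because $F$ does not depend on $u$.

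The hard part will be the estimate of $N_{\gph\,\mathcal{S}_p}(\bar x,(\bar y,\bar u))$ --- equivalently $D^*\mathcal{S}_p(\bar x,(\bar y,\bar u))$ --- in terms of the data. Here I would exploit that $\mathcal{S}_p(x)=\{(y,u)\in\mathcal{D}(x):F(x,y)\ge\psi_p(x)\}$ is the argmax map of $F(x,\cdot)$ over the complementarity set $\mathcal{D}(x)$: inner semicontinuity of $\mathcal{S}_p$ at $(\bar x,\bar y,\bar u)$ is precisely the regularity making this coderivative estimable, while the normal cone to $\gph\,\mathcal{D}$ --- via the generalized-equation description of $\mathcal{D}$ recalled after Theorem~\ref{prop} --- generates the disjunctive complementarity relations indexed by $\eta$, $\theta$, $\nu$, namely $\nabla_y g_\nu(\bar x,\bar y)\beta=0$, $\gamma_\eta=0$ and the $C$-sign condition $\gamma_i\sum_l\beta_l\nabla_{y_l}g_i(\bar x,\bar y)\ge 0$ on $\theta$; $(A_1^c)$ is the qualification condition validating the calculus for $\gph\,\mathcal{D}$, and $(A_2^c)$ is the compatibility condition that lets the $\mathcal{D}$-part and the worst-case value-function part $F(x,y)\ge\psi_p(x)$ be combined. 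The net effect is the existence of $(\beta,\gamma)$ in the $C$-multiplier set $\Lambda^{ec}\big(\bar x,\bar y,\bar u,\,v\big)$ of \eqref{C-multipliers set} with $v:=\big(\nabla_x F(\bar x,\bar y)+\nabla G(\bar x)^\top\alpha,\ \nabla_y F(\bar x,\bar y)\big)$.

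The last step is then just bookkeeping: the first two defining relations of $\Lambda^{ec}$ are exactly \eqref{St6} and \eqref{St8}, its third relation with this choice of $v$ is \eqref{St3}--\eqref{St4}, \eqref{St5} came from the Fermat/upper-level step, and \eqref{St0} is given; so $(\bar y,\bar u,\alpha,\beta,\gamma)$ certifies that $\bar x$ is $C$-stationary. The one genuinely delicate point is the coderivative estimate for the worst-case solution map $\mathcal{S}_p$ --- handling the nonsmooth complementarity structure of $\mathcal{D}$ simultaneously with the implicit constraint $F(x,y)\ge\psi_p(x)$, and checking that inner semicontinuity together with $(A_1^c)$ and $(A_2^c)$ are exactly the hypotheses under which the limiting calculus delivers the set $\Lambda^{ec}$.
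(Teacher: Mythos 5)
First, a point of reference: the paper does not prove this theorem --- it is imported verbatim from \cite{D2} (with the remark that \eqref{SCQ} is assumed globally), so there is no in-paper proof to match your argument against. Judged on its own terms, your proposal has the right overall architecture for the argument in \cite{D2}: a generalized Fermat rule, a qualification-based sum/intersection calculus producing the upper-level multiplier $\alpha$ via \eqref{UMFCQ}, and a coderivative computation for the complementarity system $\mathcal{D}$ (through the generalized-equation description $0\in h(x,(y,u))+C$) that delivers the index-set conditions \eqref{St6} and the C-sign condition \eqref{St8}. Your preliminary reductions ($\psi_p\equiv\varphi_p$ under convexity and \eqref{SCQ}; $(\bar x,\bar y,\bar u)$ being a local minimizer of $F$ over $\{G\le 0\}\cap\gph\mathcal{S}_p$) are also correct.

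The genuine gap is the central step, which you flag as ``delicate'' but do not carry out, and the route you propose for it is problematic. You reduce everything to an upper estimate of $N_{\gph\mathcal{S}_p}(\bar x,(\bar y,\bar u))$ and suggest obtaining it from the decomposition $\gph\mathcal{S}_p=\gph\mathcal{D}\cap\{(x,y,u):F(x,y)\ge\psi_p(x)\}$. This is circular: the second set is a sublevel-type set of the value function $\psi_p$, whose generalized differentiation is precisely what the theorem is designed to avoid computing directly, and $\psi_p$ is in general neither smooth nor Lipschitz, so no off-the-shelf normal-cone formula applies to it; moreover, inner semicontinuity of $\mathcal{S}_p$ does \emph{not} by itself yield a normal-cone estimate for $\gph\mathcal{S}_p$. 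The actual argument of \cite{D2} never touches $\gph\mathcal{S}_p$: it treats $\bar x$ as a local minimizer of the marginal (max-type) function $\psi_p$ over $X$, applies the upper-subdifferential version of the Fermat rule, and estimates $\partial(-\psi_p)(\bar x)$ --- equivalently $\partial^{+}\psi_p(\bar x)$ --- through the coderivative $D^{*}\mathcal{D}(\bar x,(\bar y,\bar u))$ acting on $(\nabla_yF(\bar x,\bar y),0)$; inner semicontinuity of $\mathcal{S}_p$ is exactly the hypothesis of that marginal-function estimate, while $(A_1^{c})$ and $(A_2^{c})$ are the qualification conditions under which the coderivative of the generalized equation lands in the C-multiplier set \eqref{C-multipliers set}. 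Without either executing that marginal-function estimate or supplying an actual computation of $N_{\gph\mathcal{S}_p}$, your text is a plan rather than a proof, and the plan as stated would not go through.
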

Note that both the convexity of the lower-level problem and the fulfillment of the regularity condition \eqref{SCQ} are needed here to ensure that the result holds. However, it is assumed that both conditions are satisfied throughout the paper. Hence, they are not explicitly stated in the result.}}

Next, we introduce the  necessary optimality conditions for the regularized problem \eqref{RMPCCt}.
For a fixed number $t>0$, a point $x^t$ will be said to \emph{satisfy the necessary optimality conditions} (or \emph{be a stationary point}) for problem \eqref{RMPCCt} if there exists a vector $(y^{t},u^{t}, \alpha^{t}, \beta^{t}, \gamma^{t}, \mu^{t},\delta^{t})$ such that 
\begin{eqnarray}
(x^{t},y^{t},u^{t})\in\mbox{gph }\mathcal{S}^{t}_p, \label{Er0}\\[1ex]
\nabla_{x}F(x^{t},y^{t})+\nabla G(x^{t})^{\top}\alpha^{t}+\nabla
_{x}\mathcal{L}(x^{t},y^{t},u^{t})^{\top}\beta^{t}-\sum\limits_{i=1}
^{q}\left(  \gamma_{i}^{t}-\delta_{i}^{t}u_{i}^{t}\right)  \nabla_{x}
g_{i}(x^{t},y^{t})=0,\label{Er1}\\
\nabla_{y}F(x^{t},y^{t})+\nabla_{y}\mathcal{L}(x^{t},y^{t},u^{t})^{\top}
\beta^{t}-\sum\limits_{i=1}^{q}\left(\gamma_{i}^{t}-\delta_{i}^{t}u_{i}
^{t}\right)  \nabla_{y}g_{i}(x^{t},y^{t})=0,\label{Er1-f1}\\
\forall i=1, \ldots, q:\;\, \nabla_{y}g_{i}(x^{t},y^{t})\beta^{t}+\mu_{i}^{t}+\delta_{i}^{t}g_{i}
(x^{t},y^{t})=0,\label{Er1-f2}\\[1ex]
\forall j=1, \ldots, p: \;\; \alpha_{j}^{t} \geq 0,\;\; G_j(x^t)\leq 0, \;\; \alpha_{j}^{t}G_{j}(x^{t})=0,\label{Er011}\\[1ex]
\forall i=1, \ldots, q:\;\; \gamma^t_i\geq 0, \;\; g_i(x^t, y^t)\leq 0, \;\; \gamma^t_i g_i(x^t, y^t)=0,\label{Er1-f3}\\[1ex]
\forall i=1, \ldots, q:\;\; \mu^t_i\geq 0, \;\; u^t_i\geq 0, \;\; \mu^t_i u^t_i=0,\label{Er1-f4}\\[1ex]
\forall i=1, \ldots, q:\;\; \delta^t_i\geq 0, \;\; -u^t_ig_i(x^t, y^t)\leq t, \;\; \delta^t_i \left(u^t_ig_i(x^t, y^t)+t\right)=0. \label{Er5}
\end{eqnarray}

\begin{theorem}[\cite{Zemkoho}]\label{Optimality conditions for RMPCC(t)}For a given $t>0,$
let $x^{t}$ be an upper-level regular local optimal solution for problem \eqref{RMPCCt}.
Assume that the set-valued mapping $\mathcal{S}^{t}_p$ is inner
semicontinous at $(x^{t},y^{t},u^{t})\in\mathrm{gph}\,\mathcal{S}^{t}_p$ and
the following qualification condition holds at the point  $(x^{t},y^{t},u^{t})$:
\begin{equation}
\left.
\begin{array}
[c]{r}
\nabla_{y}\mathcal{L}(x^{t},y^{t},u^{t})^{\top}\beta^{t}-\sum\limits_{i=1}
^{q}\nabla_{y}g_{i}(x^{t},y^{t})\left(  \gamma_{i}^{t}-\delta_{i}^{t}u_{i}
^{t}\right)  =0\\
\nabla_{y}g_{i}(x^{t},y^{t})\beta^{t}+\mu_{i}^{t}+\delta_{i}^{t}g_{i}
(x^{t},y^{t})=0\\
\gamma^{t}\geq0,\text{ }\delta^{t}\geq0,\text{ }\mu^{t}\geq0\\
\mu_{i}^{t}u_{i}^{t}=0,\text{ \ }\gamma_{i}^{t}g_{i}(x^{t},y^{t}
)=0,\text{\ }\delta_{i}^{t}\left(  u_{i}^{t}g_{i}(x^{t},y^{t})+t\right)  =0
\end{array}
\right\}  \Longrightarrow\left\{
\begin{array}
[c]{l}
\beta^{t}=0_{m},\\
\\
\delta^{t}=\gamma^{t}=\mu^{t}=0_{q}.
\end{array}
\right. \label{CQ1}
\end{equation}
Then $x^t$ is a stationary point for problem \eqref{RMPCCt}; i.e., there exists a vector $(y^{t},u^{t}, \alpha^{t}, \beta^{t}, \gamma^{t}, \mu^{t},\delta^{t})$ such that the optimality conditions  \eqref{Er0}--\eqref{Er5} are satisfied.
\end{theorem}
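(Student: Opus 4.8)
The plan is to treat \eqref{RMPCCt} as a pessimistic bilevel problem with smooth data: indeed, the relaxed feasibility map $\mathcal{D}^t(\cdot)$ from \eqref{Dt} is an ordinary parametric nonlinear program, with no complementarity constraints. Accordingly I would follow the route that is classical for min--max problems — first replace the min--max structure by an equivalent one-level problem, then apply a Karush--Kuhn--Tucker theorem to it and read off \eqref{Er0}--\eqref{Er5}. For the first step, I would show that $(x^t,y^t,u^t)$ is a local optimal solution of
\[
\min_{(x,y,u)}\ F(x,y)\quad\text{s.t.}\quad G(x)\le 0,\ \ (y,u)\in\mathcal{D}^t(x),\ \ \psi_p^t(x)\le F(x,y).
\]
This is elementary: any feasible triple $(x,y,u)$ near $(x^t,y^t,u^t)$ has $(y,u)\in\mathcal{D}^t(x)$, hence $F(x,y)\le\psi_p^t(x)$, so $F(x,y)=\psi_p^t(x)$; since $x\in X$ and $x^t$ is locally optimal for \eqref{RMPCCt}, it follows that $F(x,y)=\psi_p^t(x)\ge\psi_p^t(x^t)=F(x^t,y^t)$, the last equality because $(x^t,y^t,u^t)\in\mathrm{gph}\,\mathcal{S}_p^t$. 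Note that only local optimality of $x^t$ and $(y^t,u^t)\in\mathcal{S}_p^t(x^t)$ are used here.

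The real work is to pass from this one-level problem to the multiplier system \eqref{Er0}--\eqref{Er5}, and the obstacle is the nonsmoothness of $\psi_p^t$ appearing in the constraint $\psi_p^t(x)-F(x,y)\le 0$. This is where inner semicontinuity of $\mathcal{S}_p^t$ at $(x^t,y^t,u^t)$ is needed: the inner maximization $\max\{F(x,y)\mid(y,u)\in\mathcal{D}^t(x)\}$ has smooth data, and \eqref{CQ1} is precisely a Mangasarian--Fromovitz-type constraint qualification for it at $(y^t,u^t)$ — its left-hand side is exactly the homogeneous version of the $(y,u)$-stationarity block of \eqref{Er0}--\eqref{Er5}. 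Together, inner semicontinuity of the maximizer map and this qualification yield an upper estimate of the (Clarke or limiting) subdifferential of the marginal function $\psi_p^t$ at $x^t$ in terms of the KKT-multiplier set of the inner maximization at $(y^t,u^t)$. I would substitute this estimate, the multiplier $\alpha^t$ of $G(x)\le 0$ (admissible by upper-level regularity of $x^t$), and the multipliers of the smooth constraints defining $\mathcal{D}^t(x)$, into the stationarity of the one-level problem above, and then eliminate the scalar multiplier carried by the value-function constraint. Tracking signs carefully — the inner problem being a maximization, its multipliers enter with the orientation visible in \eqref{Er1} — one recovers the stationarity equations of \eqref{Er0}--\eqref{Er5}, while the blocks \eqref{Er011}--\eqref{Er5} are merely the complementarity relations attached to the constraints describing $X$ and $\mathcal{D}^t(x)$, and \eqref{Er0} is recorded directly.

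A cleaner variant — probably the one underlying \cite{Zemkoho} — avoids the subdifferential calculus altogether. Since \eqref{CQ1} is a qualification condition that persists at feasible points of $\mathcal{D}^t(\cdot)$ near $(x^t,y^t,u^t)$, the maximizers of the inner problem there coincide with its KKT points; hence, locally, the constraint $\psi_p^t(x)\le F(x,y)$ (equivalently, $(y,u)\in\mathcal{S}_p^t(x)$) can be rewritten as the inner KKT system expressed through auxiliary multiplier variables. The one-level problem then becomes a genuine smooth nonlinear program, to which the classical KKT theorem applies under \eqref{CQ1} (and upper-level regularity), the inner and outer multipliers merging into the single tuple $(y^t,u^t,\alpha^t,\beta^t,\gamma^t,\mu^t,\delta^t)$ of \eqref{Er0}--\eqref{Er5}. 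Either way, I expect the delicate point to be the bookkeeping in the second step: matching the reversed-sign contributions of the inner maximization to the stated sign pattern of \eqref{Er1}--\eqref{Er5}, and checking that no redundant second family of multipliers survives the merge.
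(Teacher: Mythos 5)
The paper gives no proof of this statement: it is imported verbatim from \cite{Zemkoho} (with the marginal-function machinery of \cite{B1S} behind it), so there is no in-text argument to compare against. Judged on its own terms, your first route is sound in outline and is essentially the strategy of the cited source: local optimality of $x^t$ for \eqref{RMPCCt} yields a generalized Fermat rule coupling the subdifferential of the max-type value function $\psi_p^t$ with the normal cone to $X$; inner semicontinuity of $\mathcal{S}_p^t$ at $(x^t,y^t,u^t)$ gives the Mordukhovich--Nam upper estimate of that subdifferential through the coderivative of $\mathcal{D}^t$; condition \eqref{CQ1} is exactly the ``no nonzero abnormal multiplier'' requirement for the constraint system defining $\mathcal{D}^t(x)$ in the $(y,u)$-variables, which lets the coderivative be expressed through multipliers $(\beta^t,\gamma^t,\mu^t,\delta^t)$ with the sign pattern of \eqref{Er1}--\eqref{Er5}; and upper-level regularity splits $\nabla G(x^t)^\top\alpha^t$ off from the normal cone to $X$. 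Your identification of the left-hand side of \eqref{CQ1} with the homogeneous $(y,u)$-stationarity block of the inner maximization is correct.

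Two caveats, one of which is a genuine gap. First, the ``cleaner variant'' does not work as stated: the inner maximization over $\mathcal{D}^t(x)$ is not concave in $(y,u)$, so its KKT points form in general a strict superset of its maximizers; replacing the constraint $(y,u)\in\mathcal{S}_p^t(x)$ by the inner KKT system therefore enlarges the feasible set of the outer minimization, and $(x^t,y^t,u^t)$ need not remain locally optimal for the enlarged problem --- this is the same obstruction that makes the KKT reformulation of a bilevel program safe only for a convex lower level. Second, in your first route the one-level problem carrying the constraint $\psi_p^t(x)\le F(x,y)$ is a value-function reformulation: that constraint is active at every feasible point, so no standard constraint qualification holds for it and ``eliminating the scalar multiplier'' it carries is not an innocuous step. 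The clean path is to skip the one-level reformulation altogether and work directly with the (upper-subdifferential) Fermat rule for $\min_{x\in X}\psi_p^t(x)$, which is also what produces the minus signs in front of $\beta^t$ and $\gamma^t-\delta^t u^t$ in \eqref{Er1}. With those repairs your outline matches what \cite{Zemkoho} actually establishes.
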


%
To establish the convergence result of Algorithm \ref{algorithm 1} in this context, the following index sets, defined for $(x,(y,u))\in X\times\mathcal{D}^{t}(x)$ with $t>0$, are needed:
\[
\begin{array}{rll}
I_{G}(x)&:=&\left\{  i\in\{1,...,p\}| \;\; G_{i}(x)=0\right\},\\
 I_{u}(x,y,u)&:=&\left\{  i\in\{1,...,q\}| \;\;u_{i}=0\right\},\\
I_{g}(x,y,u)&:=&\left\{  i\in\{1,...,q\}| \;\;g_{i}(x,y)=0\right\},\\
I_{ug}(x,y,u)&:=&\left\{  i\in\{1,...,q\}| \;\; u_{i}g_{i}(x,y)+t=0\right\}.
\end{array}
\]
Clearly, $I_{g}(x,y,u)\cap I_{ug}(x,y,u)=\varnothing$ and $I_{u}(x,y,u)\cap I_{ug}(x,y,u)= \varnothing$. Also, for the convergence result, we will use the $m$--counterparts, $(A_{1}^{m})$ and $(A_{2}^{m})$,  of assumptions $(A_{1}^{c})$ and $(A_{2}^{c})$, which can be obtained by simply replacing the C-multipliers set \eqref{C-multipliers set} by the corresponding M-multipliers set. One can easily check that $(A_{1}^{m})$ and $(A_{2}^{m})$ are weaker than $(A_{1}^{c})$ and $(A_{2}^{c})$, respectively.


\begin{theorem}\label{ConvergenceResult}
Let $t_{k} \downarrow 0$ and $(x^{k})_k$ be a sequence such that $x^k$ is a stationary point of problem \eqref{RMPCCt} for $t:=t^k$ with
$x^{k}\rightarrow\bar{x}$ as $k\rightarrow\infty$ such that $\mathcal{S}_{p}(\bar{x})$ is
nonempty and compact. Assume that $\bar{x}$ is upper-level regular, and $(A_{1}^{m})$ and $(A_{2}^{m})$ hold at all $(\bar{x},y,u)$ with $(y,u)\in\mathcal{S}_{p}(\bar{x})$. Furthermore, let
\begin{equation}
\underset{k\rightarrow\infty}{\lim} e\left(\mathcal{S}^{t_{k}}_p(x^{k}), \;\mathcal{S}_{p}(\bar{x})\right) = 0.\label{e}
\end{equation}
Then $\bar{x}$ is a C-stationary point.
\end{theorem}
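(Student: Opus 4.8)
The plan is to extract convergent subsequences from the data associated with each stationary point $x^k$, pass to the limit in the optimality system \eqref{Er0}--\eqref{Er5}, and verify that the limit satisfies the C-stationarity system \eqref{St0}--\eqref{St8}. First I would record, for each $k$, the multiplier vector $(y^k,u^k,\alpha^k,\beta^k,\gamma^k,\mu^k,\delta^k)$ guaranteed by stationarity of $x^k$ for \eqref{RMPCCt} with $t:=t_k$, so that $(x^k,y^k,u^k)\in\gph\mathcal{S}^{t_k}_p$ and \eqref{Er1}--\eqref{Er5} hold. Using the hypothesis $e(\mathcal{S}^{t_k}_p(x^k),\mathcal{S}_p(\bar x))\to 0$ together with the nonemptiness and compactness of $\mathcal{S}_p(\bar x)$, I would show that the sequence $((y^k,u^k))$ is bounded and that every cluster point $(\bar y,\bar u)$ lies in $\mathcal{S}_p(\bar x)$; passing to a subsequence gives $(y^k,u^k)\to(\bar y,\bar u)$ with $(\bar x,\bar y,\bar u)\in\gph\mathcal{S}_p$, which is \eqref{St0}. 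This also pins down the limiting index sets: for $k$ large, $I_u(x^k,y^k,u^k)\subseteq\eta\cup\theta$, $I_g(x^k,y^k,u^k)\subseteq\theta\cup\nu$, and the Scholtes-active set $I_{ug}(x^k,y^k,u^k)$ is contained in $\theta\cup\nu$ as well, since $u^k_ig_i(x^k,y^k)+t_k=0$ forces $u^k_i>0$ or $g_i(x^k,y^k)\to 0$ in the limit; in particular $\gamma^k_i=0$ whenever $i\in\eta$ eventually, and $\delta^k_i=0$ whenever $i\notin I_{ug}(x^k,y^k,u^k)$.

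The crux is boundedness of $(\alpha^k,\beta^k,\gamma^k,\mu^k,\delta^k)$. I would argue by contradiction: if $M_k:=\|(\alpha^k,\beta^k,\gamma^k,\mu^k,\delta^k)\|\to\infty$, divide \eqref{Er1}--\eqref{Er5} by $M_k$ and extract a nonzero limit $(\alpha^\infty,\beta^\infty,\gamma^\infty,\mu^\infty,\delta^\infty)$. The scaled versions of \eqref{Er1}--\eqref{Er5} lose the $\nabla F$ terms in the limit; combining the first two scaled equations with the third (after using $\delta^k_ig_i\to$ something controlled and the fact that on $I_{ug}$ one has $u^k_ig_i(x^k,y^k)=-t_k\to 0$) I would manipulate into the form defining the M-multiplier set $\Lambda^{em}(\bar x,\bar y,\bar u,0)$. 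Concretely, set $\tilde\gamma^\infty_i:=\gamma^\infty_i-\bar u_i\delta^\infty_i$; the sign relations from \eqref{Er011}--\eqref{Er5} (namely $\gamma^k\ge 0$, $\delta^k\ge 0$, the complementarities $\gamma^k_ig_i=0$, $\delta^k_i(u^k_ig_i+t_k)=0$, $\mu^k_iu^k_i=0$) yield in the limit exactly the sign and orthogonality conditions in the M-version of \eqref{C-multipliers set}, while the scaled upper-level block gives $\nabla G(\bar x)^\top\alpha^\infty$ feeding into upper-level regularity. Then $(A_1^m)$ forces $\beta^\infty=0$, $\tilde\gamma^\infty=0$ (hence $\gamma^\infty=0$, $\delta^\infty=0$ after separating indices), and upper-level regularity \eqref{UMFCQ} forces $\alpha^\infty=0$; the equation for $\mu^k$ then forces $\mu^\infty=0$, contradicting nonzeroness. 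Hence the multipliers are bounded, and a further subsequence converges to some $(\alpha,\beta,\gamma,\mu,\delta)$.

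Finally I would pass to the limit in \eqref{Er1}--\eqref{Er5} themselves (not the scaled versions). Setting $\hat\gamma_i:=\gamma_i-\bar u_i\delta_i$, equation \eqref{Er1} becomes \eqref{St3} with this $\hat\gamma$ in place of $\gamma$ and a sign flip absorbed into $\beta$; likewise the $y$-equation becomes \eqref{St4}. The upper-level relations \eqref{Er011} pass directly to \eqref{St5}. For \eqref{St6}: on $\nu$ one has $g_i(\bar x,\bar y)=0$ and $u^k_i>0$ eventually, so $\mu^k_i=0$, and the equation $-\nabla_yg_i\beta^k+\mu^k_i+\delta^k_ig_i=0$ gives $\nabla_yg_\nu(\bar x,\bar y)\beta=0$ in the limit; on $\eta$ one has $g_i(\bar x,\bar y)<0$ eventually, so $\gamma^k_i=0$ and $\delta^k_i=0$ (the Scholtes constraint is inactive), whence $\hat\gamma_\eta=0$. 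The remaining condition \eqref{St8} is where the Scholtes structure does its work: for $i\in\theta$, examine $\gamma^k_i$ and $\delta^k_i$ separately. If $i$ is not in the Scholtes-active set infinitely often, $\delta^k_i=0$ and $\gamma^k_i\ge 0$ with $\gamma^k_ig_i(x^k,y^k)=0$; if $i$ is Scholtes-active, then $u^k_ig_i(x^k,y^k)=-t_k<0$ with $u^k_i,-g_i(x^k,y^k)>0$, and the product $\hat\gamma_i\cdot(\nabla_yg_i(\bar x,\bar y)\beta)$ inherits a definite sign from the limiting behaviour of $\delta^k_iu^k_i$ times the $y$-equation — tracking the signs of $\delta^k_i\ge0$, $g_i<0$, and the relation $-\nabla_yg_i(x^k,y^k)\beta^k+\mu^k_i+\delta^k_ig_i(x^k,y^k)=0$ produces precisely $\gamma_i\sum_l\beta_l\nabla_{y_l}g_i(\bar x,\bar y)\ge 0$. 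I expect this sign bookkeeping on $\theta$, and the simultaneous handling of the two relaxed products $u_ig_i$ and $u_i$, to be the main obstacle; everything else is a routine limit. The continuity of all derivatives (assumed throughout) and $x^k\to\bar x$, $(y^k,u^k)\to(\bar y,\bar u)$ make the passages to the limit in the gradient terms immediate.
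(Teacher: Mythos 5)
Your overall architecture --- extracting $(\bar y,\bar u)\in\mathcal{S}_p(\bar x)$ from \eqref{e} and compactness, proving boundedness of the multipliers by a normalization/contradiction argument routed through the M-multiplier sets, the qualification conditions $(A_1^m)$, $(A_2^m)$ and \eqref{UMFCQ}, and then passing to the limit --- is the same as the paper's. But two of your concrete steps fail. First, the claim that $I_{ug}(x^k,y^k,u^k)\subseteq\theta\cup\nu$, and hence that $\delta^k_i=0$ eventually for $i\in\eta$, is false. Activity of the Scholtes constraint, $u^k_ig_i(x^k,y^k)=-t_k$, only forces $u^k_i>0$ and $g_i(x^k,y^k)<0$ \emph{at that $k$}; it is entirely possible that $u^k_i=t_k/|g_i(x^k,y^k)|\to 0$ while $g_i(x^k,y^k)\to g_i(\bar x,\bar y)<0$, i.e. $i\in\eta$ (this already occurs in Example \ref{ex1}). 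Consequently the contributions $\delta^k_iu^k_i\nabla g_i(x^k,y^k)$ and $\delta^k_ig_i(x^k,y^k)$ for $i\in\eta$ do not simply disappear: the former vanishes in the limit only because $\bar u_i=0$, and the latter tends to $\delta_ig_i(\bar x,\bar y)<0$, which is precisely the quantity the paper uses to exclude $\delta_\eta\neq0$ in the boundedness argument. Your sketch never confronts this case, and your later assertion that ``the Scholtes constraint is inactive'' on $\eta$ is simply wrong.

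Second, normalizing the raw vector $(\alpha^k,\beta^k,\gamma^k,\mu^k,\delta^k)$ does not deliver the contradiction you need. For $i\in\theta$ one has $u^k_i\to0$ and $g_i(x^k,y^k)\to0$ simultaneously, so $\delta^k_i$ may diverge while every quantity that actually enters \eqref{Er1}--\eqref{Er5} --- namely $\delta^k_iu^k_i$ and $\delta^k_ig_i(x^k,y^k)$ --- remains bounded. Dividing by your $M_k$ then sends the entire first-order system to $0=0$, and the normalized limit can be a unit vector supported on $\delta^\infty_\theta$ alone; neither $(A_1^m)$, nor $(A_2^m)$, nor upper-level regularity constrains such a vector, so ``contradicting nonzeroness'' does not go through. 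The paper avoids this by folding $\delta^k_iu^k_i$ and $\delta^k_ig_i(x^k,y^k)$ into modified multipliers $\tilde\gamma^k,\tilde\mu^k$ and normalizing $(\alpha^k,\tilde\beta^k,\tilde\mu^k,\tilde\gamma^k,\delta^k_{\eta\cup\nu})$, i.e. it bounds the combinations that appear in the equations rather than $\delta^k_\theta$ itself; your plan needs the same repair. Finally, the sign verification of \eqref{St8} on $\theta$, which you defer as ``the main obstacle,'' is where the disjointness of $\mathrm{supp}\,\gamma^k$ and $\mathrm{supp}\,\delta^k$ must be exploited in a two-case contradiction; as written, your proposal does not supply it.
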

\begin{proof}
Since $x^{k}$ is a stationary point of \eqref{RMPCCt} for $t:=t_k$, there exist a vector
$(y^{k},u^{k})\in\mathcal{S}^{t_{k}}_p(x^{k})$ and multipliers $(\alpha
^{k},\beta^{k},\mu^{k},\gamma^{k},\delta^{k})$ such that the relationships
\begin{equation}\label{S1}
\left\{
\begin{array}{l}
\nabla_{x}F(x^{k},y^{k})+\nabla G^{T}(x^{k})\alpha^{k}+\nabla_{x}
\mathcal{L}(x^{k},y^{k},u^{k})^{T}\beta^{k}-\sum\limits_{i=1}^{q}(\gamma
_{i}^{k}-\delta_{i}^{k}u_{i}^{k})\nabla_{x}g_{i}(x^{k},y^{k})=0,\\[1ex]
\multicolumn{1}{l}{\nabla_{y}F(x^{k},y^{k})+\nabla_{y}\mathcal{L}(x^{k}
,y^{k},u^{k})^{T}\beta^{k}-\sum\limits_{i=1}^{q}(\gamma_{i}^{k}-\delta_{i}
^{k}u_{i}^{k})\nabla_{y}g_{i}(x^{k},y^{k})=0,}\\[1ex]
\multicolumn{1}{l}{\nabla_{y}g_{i}(x^{k},y^{k})\beta^{k}+\mu_{i}^{k}%
+\delta_{i}^{k}g_{i}(x^{k},y^{k})=0,\;\; i=1,...,q}
\end{array}
\right.
\end{equation}
hold together with the following complementarity conditions:
\begin{equation*}\label{S2}
\left\{\begin{array}{l}
\alpha_{i}^{k}\geq0,\;\, G_{i}(x^{k})\leq0,\;\, \alpha_{i}^{k}G_{i}(x^{k})=0,\;, i=1,...,p,\\[1ex]
\mu_{i}^{k}\geq0,\;\, \gamma_{i}^{k}\geq0,\;\, \delta_{i}^{k}\geq0, \;\, i=1,...,q,\\[1ex]
\mu_{i}^{k}u_{i}^{k}=0,\;\; \gamma_{i}^{k}g_{i}(x^{k},y^{k})=0,\;\; \delta_{i}^{k}(u_{i}^{k}g_{i}(x^{k},y^{k})+t_{k})=0, \;\, i=1,...,q.
\end{array}
\right.
\end{equation*}
Hence, it follows that
\begin{equation*}
\nabla_{y}g_{i}(x^{k},y^{k})\beta^{k}=\left\{
\begin{array}{lll}
-\mu_{i}^{k} & \mbox{ if } & i\in\mathrm{supp}\mu^{k},\\[1ex]
-\delta_{i}^{k} g_{i}(x^{k},y^{k})& \mbox{ if } & i\in\mathrm{supp}\delta^{k},\\[1ex]
0 & \mbox{ if } & i\notin (\mathrm{supp}\mu^{k}\cup\mathrm{supp}\delta^{k}),
\end{array}
\right. \label{S2b}
\end{equation*}
and
\begin{align}
\alpha_{i}^{k} &  \geq0,\text{ \ }\mathrm{supp}\alpha^{k}\subset I_{G}
(x^{k}),\medskip\label{S3}\\
\mu_{i}^{k} &  \geq0,\text{ \ }\mathrm{supp}\mu^{k}\subset I_{u}(x^{k}
,y^{k},u^{k}),\medskip\label{S4}\\
\gamma_{i}^{k} &  \geq0,\text{ \ }\mathrm{supp}\gamma^{k}\subset I_{g}
(x^{k},y^{k},u^{k}),\medskip\label{S5}\\
\delta_{i}^{k} &  \geq0,\text{ \ }\mathrm{supp}\delta^{k}\subset I_{ug}
(x^{k},y^{k},u^{k}),\label{S6}
\end{align}
where ${{\mathrm{supp}z := \left\lbrace i |\,\, z_i \neq 0\right\rbrace}}$ is the support of a vector $z \in \mathbb{R}^n$.

Next, observe that the continuity of the constraint functions $G$ and $g$ leads to
\begin{equation}
I_{G}(x^{k})\subseteq I_{G},\text{ }I_{u}(x^{k},y^{k},u^{k})\subseteq
\theta\cup\eta,\ I_{g}(x^{k},y^{k},u^{k})\subseteq\theta\cup\nu
,\label{S7}%
\end{equation}
for $k$ sufficiently large; here
$
I_{G}:=\left\{  i\in\left\{  1,...,p\right\}|  \; \; G_{i}(\bar{x})=0\right\}.
$
Now, we can introduce some new Lagrange multipliers by setting
\begin{equation}\label{S8}
\tilde{\gamma}_{i}^{k}:=\left\{
\begin{array}{lll}
\gamma_{i}^{k} & \mbox{ if }& i\in\mathrm{supp}\gamma^{k},\\
-\delta_{i}^{k}u_{i}^{k}& \mbox{ if }& i\in\mathrm{supp}\delta^{k}\backslash\eta,\\
0& & {{\mbox{ otherwise,  }}}
\end{array}
\right. \;\, \mbox{ and } \;\,
\tilde{\mu}_{i}^{k}:=\left\{
\begin{array}{lll}
\mu_{i}^{k}& \mbox{ if }& i\in\mathrm{supp}\mu^{k},\\
\delta_{i}^{k}g_{i}(x^{k},y^{k})& \mbox{ if }& i\in\mathrm{supp}\delta^{k}\backslash\nu,\\
0& & {{\mbox{ otherwise.  }}}
\end{array}
\right.
\end{equation}
Hence, from the optimality conditions in (\ref{S1}), we have the system of equations
\begin{equation}
\left\{
\begin{array}{l}
\nabla_{x}F(x^{k},y^{k})+\nabla G(x^{k})^{T}\alpha^{k}+\nabla_{x}\mathcal{L}(x^{k},y^{k},u^{k})^{T}\beta^{k}\\[0.5ex]
 \qquad \qquad \qquad \qquad \qquad \qquad \qquad \;\;\; -\, \sum\limits_{i=1}^{q}\tilde{\gamma}_{i}
^{k}\nabla_{x}g_{i}(x^{k},y^{k}) + \sum\limits_{i\in\eta
}\delta_{i}^{k}u_{i}^{k}\nabla_{x}g_{i}(x^{k},y^{k})=0,\\[1ex]
\multicolumn{1}{l}{\nabla_{y}F(x^{k},y^{k})+\nabla_{y}\mathcal{L}(x^{k}
,y^{k},u^{k})^{T}\beta^{k}-\sum\limits_{i=1}^{q}\tilde{\gamma}_{i}
^{k}\nabla_{y}g_{i}(x^{k},y^{k})+\sum\limits_{i\in\eta}\delta_{i}^{k}u_{i}
^{k}\nabla_{y}g_{i}(x^{k},y^{k})=0,}
\end{array}
\right. \label{S10}
\end{equation}
which is accompanied by the conditions
\begin{equation}
{{\nabla_{y}g_{i}(x^{k},y^{k})\beta^{k}=\left\{
\begin{array}{lll}
-\tilde{\mu}_{i}^{k} & \mbox{ if } & i\in\mathrm{supp}\mu^{k}
\cup\mathrm{\ supp}\delta^{k}\backslash\nu,\\
-\delta_{i}^{k}g_{i}(x^{k},y^{k}) & \mbox{ if } & i\in\nu,\\
0 & \mbox{ if } & i\notin(\mathrm{supp}\mu^{k}\cup\mathrm{supp}\delta^{k}).
\end{array}
\right. }}\label{S11}
\end{equation}
On the other hand, from (\ref{e}), there exists $(z^{k},w^{k})\in
\mathcal{S}_{p}(\bar{x})$ such that
\[
\left\Vert y^{k}-z^{k}\right\Vert \underset{k \rightarrow
	\infty} \rightarrow0\text{ and }\left\Vert
u^{k}-w^{k}\right\Vert \underset{k \rightarrow
	\infty} \rightarrow 0,
\]
and since $\mathcal{S}_{p}(\bar{x})$ is compact, the sequence $(y^{k}
,u^{k})_{k}$ (up to a subsequence) converges to some  point $(\bar{y},\bar
{u})\in\mathcal{S}_{p}(\bar{x})$. Next, we are going to show that the sequence
$(\chi_{k}):=
\begin{pmatrix}
\alpha^{k},\beta^{k},\tilde{\mu}^{k},\tilde{\gamma}^{k},\delta
_{\eta\cup\nu}^{k}
\end{pmatrix}
_{k}$ is bounded. Assume that this is not the case and consider the sequence
$\left(\dfrac{\chi_{k}}{\left\Vert \chi_{k}\right\Vert }\right)_{k}$ converging to a
nonvanishing vector $\chi:=(\alpha,\beta,\tilde{\mu},\tilde{\gamma
},\delta_{\eta\cup\nu})$ (otherwise, a suitable subsequence is chosen).
Dividing (\ref{S10}) by $\left\Vert \chi_{k}\right\Vert $ and taking the limit
$k\rightarrow\infty$ while taking into account the continuous differentiability of all
involved functions and the fact that $\bar{u}_{i}=0$ for $i\in\eta$,
\begin{equation}\label{CQ}
\nabla G(\bar{x})^{T}\alpha+\nabla_{x}\mathcal{L}(\bar{x},\bar{y},\bar{u})^{T}\beta-\nabla_{x}g^{T}(\bar{x},\bar{y})\tilde{\gamma}=0
\mbox{ and }
\nabla_{y}\mathcal{L}(\bar{x},\bar{y},\bar{u})^{T}\beta-\nabla_{y}g^{T}(\bar{x},\bar{y})\tilde{\gamma}=0.
\end{equation}
Let us show now that we have the following inclusions for any $k$ sufficiently large:
\begin{equation}
\mathrm{supp}\alpha\subset I_{G}(x^{k})\subset I_{G},\label{S12}%
\end{equation}%
\begin{equation}
\mathrm{supp}\tilde{\mu}\subset I_{u}(x^{k},y^{k},u^{k})\cup I_{ug}%
(x^{k},y^{k},u^{k})\backslash\nu\subset\theta\cup\eta,\label{S13}%
\end{equation}%
\begin{equation}
\mathrm{supp}\tilde{\gamma}\subset I_{g}(x^{k},y^{k},u^{k})\cup I_{ug}%
(x^{k},y^{k},u^{k})\backslash\eta\subset\theta\cup\nu.\label{S14}%
\end{equation}
Indeed, for (\ref{S12}), let $i\in\mathrm{supp}\alpha$ {{so that}} $\alpha_{i}>0$.
Then for any $k$ large enough, $\alpha_{i}^{k}>0$; i.e., $i\in
\mathrm{supp}\alpha^{k}\subset I_{G}(x^{k})$ by (\ref{S3}). Given now $i\in
I_{G}(x^{k})$ for any $k$ sufficiently large so that, there exists $k_{0}%
\in\mathbb{N}$ such that for any $k\geq k_{0},$ $G_{i}(x^{k})=0.$ Passing to
the limit as $k\rightarrow\infty$, we get $G_{i}(\bar{x})=0$ ($G_{i}$ being
continuous) and so $i\in I_{G}(\bar{x}).$ For (\ref{S13}): take $i\in
\mathrm{supp}\tilde{\mu}$ thus for such $k$, $\tilde{\mu
}_{i}^{k}>0$ i.e., $i\in\mathrm{supp}\tilde{\mu}^{k}.$ Then for all $k$
sufficiently large, $i\in\mathrm{supp}\mu^{k}\cup\mathrm{supp}\delta
^{k}\backslash\nu.$ Hence by properties (\ref{S4}) and (\ref{S6}), $i\in
I_{u}(x^{k},y^{k},u^{k})\cup I_{ug}(x^{k},y^{k},u^{k})\backslash\nu.$ And
from (\ref{S7}), we have $I_{u}(x^{k},y^{k},u^{k})\subset\theta\cup\eta$. Now if
$u_{i}^{k}g_{i}(x^{k},y^{k})+t_{k}=0$ for $k$ large enough and
$i\notin\nu$, taking the limit leads to $\bar{u}_{i}g_{i}(\bar
{x},\bar{y})=0$ and thus $i\in\theta\cup\eta.$ Similarly, we get (\ref{S14}). On the
other hand,
\begin{align*}
\alpha_{j} \geq0,\text{ \ }G_{j}(\bar{x})\leq0, & \;\; \alpha_{j}G_{j}(\bar{x}
)=0 \;\, \mbox{ for }\;\, j=1,...,p,\medskip \\
\delta_{\eta\cup\nu} &  =\lim\delta_{\eta\cup\nu}^{k}\geq0,\medskip\\
\tilde{\gamma}_{i} &  =\lim\tilde{\gamma}_{i}^{k}=0\text{ \ \ }\forall i\in\eta,\medskip\\
\nabla_{y}g_{\nu}(\bar{x},\bar{y})\beta &  =\lim\nabla_{y}g_{\nu}(x^{k},y^{k})\frac{\beta^{k}}{\left\Vert \chi
_{k}\right\Vert } =-\lim{{\frac{\delta_{\nu}^{k}}{\left\Vert \chi
_{k}\right\Vert }g_{\nu}(x^{k},y^{k}
)=-\delta_{\nu}g_{\nu}(\bar{x},\bar{y})=0.}}
\end{align*}
Let us now prove that for all $i\in\theta$, it holds that
\[
(\tilde{\gamma}_{i}>0\wedge\nabla_{y}g_{i}(\bar{x},\bar{y})\beta
<0)\vee\tilde{\gamma}_{i}\nabla_{y}g_{i}(\bar{x},\bar{y})\beta=0.
\]
 Assume that $\tilde{\gamma}_{i}<0$ or $\nabla_{y}
g_{i}(\bar{x},\bar{y})\beta>0$ for some $i\in\theta$ with
$\tilde{\gamma}_{i}\nabla_{y}g_{i}(\bar{x},\bar{y})\beta\neq0.$ If
$\tilde{\gamma}_{i}<0$ then for any $k$ large enough, $\tilde{\gamma}_{i}
^{k}<0$ and so from (\ref{S8}), $i\in\mathrm{supp}\delta^{k}$. Moreover, as $\mathrm{supp}\delta^{k}\cap\, \mathrm{supp}\mu^{k} =\varnothing$, $i\notin\mathrm{supp}\mu^{k}$  and it follows  from equation (\ref{S11}) that we have
\[
\nabla_{y}g_{i}\left(x^{k},y^{k}\right)\frac{\beta^{k}}{\left\Vert \chi
_{k}\right\Vert }=-\frac{\delta_{i}^{k}}{\left\Vert \chi_{k}\right\Vert }
g_{i}\left(x^{k},y^{k}\right).
\]
Hence, at the limit, we get $\nabla_{y}g_{i}(\bar{x},\bar{y})\beta=0$ since $g_{i}(x^{k},y^{k})$ converges to $g_{i}(\bar{x},\bar{y})=0$
($i\in\theta$) and the sequence $\left(\dfrac{\delta_{i}^{k}}{\left\Vert \chi
_{k}\right\Vert}\right)$ is bounded. This leads to a contradiction since
$\tilde{\gamma}_{i}\nabla_{y}g_{i}(\bar{x},\bar{y})\beta\neq0$ by
assumption. Similarly, if $\nabla_{y}g_{i}(\bar{x},\bar{y})\beta>0,$
for all $k$ large enough, $\nabla_{y}g_{i}(x^{k},y^{k})\beta^{k}>0$
so from (\ref{S11}), $i\in\mathrm{supp}\delta^{k}.$ Then $\dfrac
{\tilde{\gamma}_{i}^{k}}{\left\Vert \chi_{k}\right\Vert }=-\dfrac{\delta
_{i}^{k}}{\left\Vert \chi_{k}\right\Vert }u_{i}^{k}$ converges to
$\tilde{\gamma}_{i}=0$ ($\bar{u}_{i}=0$), which leads to a contradiction too.
Consequently, we have
\[
\left\{
\begin{array}{l}
\nabla_{y}\mathcal{L}(\bar{x},\bar{y},\bar{u})^{T}\beta-\nabla
_{y}g^{T}(\bar{x},\bar{y})\tilde{\gamma}=0,\\
\nabla_{y}g_{\nu}(\bar{x},\bar{y})\beta=0,\text{ \ }\tilde{\gamma}_{\eta}=0,\\
(\tilde{\gamma}_{i}>0\wedge\nabla_{y}g_{i}(\bar{x},\bar{y})\beta<0)\vee\tilde{\gamma}_{i}\nabla_{y}g_{i}(\bar{x},\bar{y})\beta
=0\; \mbox{ for }\; i\in\theta,
\end{array}
\right.
\]
so that $(-\beta,\tilde{\gamma})\in\Lambda_{y}^{em}(\bar{x},\bar
{y},\bar{u},0)$ and by condition  $(A_{2}^{m})$, we get
\[
\nabla_{x}\mathcal{L}(\bar{x},\bar{y},\bar{u})^{T}\beta-\nabla
_{x}g^{T}(\bar{x},\bar{y})\tilde{\gamma}=0
\]
using (\ref{CQ}), which yields $\nabla G(\bar{x})^{T}\alpha=0$; this implies that $\alpha=0$ ($\bar{x}$ being upper-level regular). Thus,
\[
\left\{
\begin{array}{l}
\nabla_{x,y}\mathcal{L}(\bar{x},\bar{y},\bar{u})^{T}\beta-\nabla g^{T}(\bar{x},\bar{y})\tilde{\gamma}=0,\\
\nabla_{y}g_{\nu}(\bar{x},\bar{y})\beta=0,\text{ \ }\tilde{\gamma}_{\eta}=0,\\
(\tilde{\gamma}_{i}>0\wedge\nabla_{y}g_{i}(\bar{x},\bar{y})\beta<0)\vee\tilde{\gamma}_{i}\nabla_{y}g_{i}(\bar{x},\bar{y})\beta
=0\; \mbox{ for }\; i\in\theta;
\end{array}
\right.
\]
that is, $(-\beta,\tilde{\gamma})\in\Lambda^{em}(\bar{x},\bar{y},
\bar{u},0)$ and by the condition $(A_{1}^{m}),$ we get $(\beta,
\tilde{\gamma})=0.$ Hence $(\alpha,\beta,\tilde{\mu},\tilde{\gamma}
)=0$ (observe that $\tilde{\mu}=0$ from (\ref{S11})) and we conclude that
$\delta_{\eta\cup\nu}\neq0.$ Assume without loss of generality that there
exists $i\in\eta$ such that $\delta_{i}>0.$ Thus for $k$ large enough,
$i\in\mathrm{supp}\delta_{i}^{k}\backslash\nu$ and from (\ref{S8}),
$\tilde{\mu}_{i}^{k}:=\delta_{i}^{k}g_{i}(x^{k},y^{k})$ so $\dfrac{\tilde{\mu
}_{i}^{k}}{\left\Vert \chi_{k}\right\Vert }$ converges to $\tilde{\mu}
_{i}=\delta_{i}g_{i}(\bar{x},\bar{y})<0$ (since $i\in\eta$) and the
contradiction follows (with the fact that $\tilde{\mu}_{i}=0$). Similary, having
$\delta_{\nu}\neq0$ implies that we get a contradiction.

Consequently, the sequence $(\alpha^{k},\beta^{k},\tilde{\mu}
^{k},\tilde{\gamma}^{k},\delta_{\eta\cup\nu}^{k})_{k}$ is bounded.
Consider $(\bar{\alpha},\bar{\beta},\overline{\tilde{\mu}},\overline
{\tilde{\gamma}},\bar{\delta}_{\eta\cup\nu})$ its limit (up to a
subsequence) so that we have clearly
\[
\mathrm{supp}\bar{\alpha}\subset I_{G},\text{ }\mathrm{supp}\overline
{\tilde{\mu}}\subset\theta\cup\eta,\ \mathrm{supp}\overline{\tilde{\gamma}
}\subset\theta\cup\nu.
\]
Taking the limit in (\ref{S10}), we obtain
\[
\left\{
\begin{array}{l}
\nabla_{x}F(\bar{x},\bar{y})+\nabla G(\bar{x})^{T}\bar{\alpha}+\nabla
_{x}\mathcal{L}(\bar{x},\bar{y},\bar{u})^{T}\bar{\beta}-\sum
\limits_{i=1}^{q}\overline{\tilde{\gamma}}_{i}\nabla_{x}g_{i}(\bar{x},\bar
{y})=0,\\[1ex]
\multicolumn{1}{l}{\nabla_{y}F(\bar{x},\bar{y})+\nabla_{y}\mathcal{L}(\bar
	{x},\bar{y},\bar{u})^{T}\bar{\beta}-\sum\limits_{i=1}^{q}\overline
	{\tilde{\gamma}}_{i}\nabla_{y}g_{i}(\bar{x},\bar{y})=0,}
\end{array}
\right.
\]
with $\bar{\alpha}\geq0$, $\bar{\alpha}^\top G(\bar{x})=0$, $\overline{\tilde{\gamma}}_{\eta}=0$, and $\nabla_{y}g_{\nu}(\bar
{x},\bar{y})\bar{\beta}=0$.

It remains to prove that $\overline{\tilde{\gamma}}_{i}\nabla_{y}g_{i}(\bar
{x},\bar{y})\bar{\beta}\leq0$ for $i\in\theta.$ Let us assume that for
some $i\in\theta$,
\[
\overline{\tilde{\gamma}}_{i}>0 \;\; \mbox{ and }\;\; \nabla_{y}g_{i} (\bar{x},\bar{y})\bar{\beta}>0.
\]
From (\ref{S8}) and the fact that
$\overline{\tilde{\gamma}}_{i}>0,$ $i\in\mathrm{supp}\gamma^{k}$ and $g_{i}(x^{k},y^{k})=0$ for all $k$
large enough, so that
\begin{equation*}
\nabla_{y}g_{i}(x^{k},y^{k})\beta^{k}=\left\{
\begin{array}{lll}
-\mu_{i}^{k} & \, \mbox{ if } i\in\mathrm{supp}\mu^{k},\medskip\\
0 & \, \mbox{ otherwise.}
\end{array}
\right.
\end{equation*}

Thus,
$
\nabla_{y}g_{i}(\bar{x},\bar{y})\bar{\beta} \leq 0,
$
which is a contradiction to our assumption that $\nabla
_{y}g_{i}(\bar{x},\bar{y})\bar{\beta}>0.$ Similarly, if $\overline
{\tilde{\gamma}}_{i}<0$ and $\nabla_{y}g_{i}(\bar{x},\bar{y})\bar{\beta
}<0.$ Thus, $\tilde{\gamma}^{k}<0$ for all $k$ large enough and by (\ref{S8}),
$i\in\mathrm{supp}\delta^{k}\backslash\eta$ for all $k $ large enough with
$i\notin\mathrm{supp}\mu^{k}$. Hence, for all $k$ large enough,
\[
\nabla_{y}g_{i}(x^{k},y^{k})\frac{\beta^{k}}{\left\Vert \chi
_{k}\right\Vert }=-\frac{\delta_{i}^{k}}{\left\Vert \chi_{k}\right\Vert }
g_{i}(x^{k},y^{k})
>0,
\]
and the limit leads to the inequality $\nabla_{y}g_{i}(\bar{x},\bar{y})\bar{\beta}\geq0$, which is
a contradiction. Hence, we have  the inequality $\overline{\tilde{\gamma}}_{i}(\nabla_{y}g_{i}(\bar
{x},\bar{y})\bar{\beta})\leq0$ for $i\in\theta$ and the conclusion
follows. \hfill \qed
\end{proof}
To illustrate this result, we apply it to the problem from Example
\ref{ex4}.
\begin{example}
Observe that for the optimal solution $\bar{x}=-1$ of
Example \ref{ex4}, $\mathcal{S}_{p}(\bar{x})$ is (nonempty)
compact and the conditions $(A_{1}^{m})$ and $(A_{2}^{m})$ are trivially satisfied at all $(\bar{x},y,u)\in \mathrm{gph}\,\mathcal{S}_{p}$. Observe also that $x^k=-1$ and $x^k=\sqrt[]{t_k} $  are the stationary points of  \eqref{RMPCCt} for $t:=t_k$. {{Evidently, property}} \eqref {e} is satisfied for  $x^k=-1$. In fact, we have for any sequence $(x_{k})_k$
in $\left[  -1,0\right[  $ such that $x_{k}\rightarrow\bar{x}=-1$,
\[
\mathcal{S}^{t_{k}}_p(x_{k})=\left\{  (1,(u_{k},u_{k}-x_{k}))\left| \;0\leq u_{k}\leq
t_{k}\right.\right\} \;\; \mbox{ for all } \;\; k.
\]
Hence, giving $(u_{k})$ such that $0\leq u_{k}\leq t_{k}$ with  $t_{k}\rightarrow 0^+$, we get
\[
d((1,(u_{k},u_{k}-x_{k})), \; \mathcal{S}_{p}(\bar{x})) \,=\, \left\Vert
(1,(u_{k},u_{k}-x_{k})) - (1,\, (0,1))\right\Vert  \, \leq \, 2u_{k}+\left\vert x_{k}+1\right\vert \longrightarrow 0
\]
and thus $e\left(\mathcal{S}^{t_{k}}_p(x_{k}), \;\mathcal{S}_{p}(\bar{x})\right) \longrightarrow 0$ as $k \longrightarrow \infty$.

However, for the stationary point $x^k=\sqrt[]{t_k}$, the property \eqref {e} does not hold since
\begin{equation}\tag*{\mbox{\qed}}
    e\left(\mathcal{S}^{t_{k}}_p(\sqrt[]{t_k}),\; \mathcal{S}_{p}(0)\right)=\left\Vert \left(\frac
{t_{k}}{\sqrt[]{t_k}},\, \sqrt[]{t_k}, \,0\right)-(1,\, 0, \,0)\right\Vert =\left\vert \sqrt[]{t_k}-1\right\vert + \sqrt[]{t_k} \longrightarrow 1.
\end{equation}
\end{example}
Note that condition \eqref{e} is satisfied if the set-valued mapping $(t,x) \rightrightarrows \mathcal{S}^{t}_p(x)$ is H-upper semicontinuous at $(0,\bar{x})$; i.e.,
$
e(\mathcal{S}^{t}_p(x), \; \mathcal{S}^{0}_p(\bar{x})) \longrightarrow 0
$
as $(t,x)\longrightarrow (0,\bar{x})$,  taking into account the equality $\mathcal{S}^{0}_p(\bar{x})=\mathcal{S}_{p}(\bar{x})$.
{{The following example shows that the condition \eqref{e} can be satisfied in the absence of the H-upper semicontinuity of the set-valued mapping $(t,x) \rightrightarrows \mathcal{S}^{t}_p(x)$.}}
\begin{example}
{{Consider a scenario of problem \eqref{PBP} defined by
\begin{equation}
F(x,y):=\ln{x}-\left|  y \right| \text{ for }\, x\in ]0,1],\text{ \ } F(0,y):=-\left|  y \right|,\,\,  K(x):=\left]  -\infty,1\right]\text{ and \ \ }f(x,y):= -  x y . \label{e1}
\end{equation}
Then we obtain $\psi_{p}(0)=0$ while for $x\neq 0$, $\psi_{p}(x)=\ln{x}-1$ given that
\[ \mathcal{S}_p(x)=\left\{
\begin{array}
[c]{c}
\left\{ (1,\;x ) \right\}  \text{ \ if \ \ }x\neq0,\\[1ex]
\left\{  (0,\,0)\right\}  \text{ \ \  if \ \ }x=0.
\end{array}
\right.
\]
On the other hand, we get for $t>0$  sufficiently small,
\[ \mathcal{S}_{p}^{t}(x)=\left\{
\begin{array}
[c]{c}
\left\{\left(1-\dfrac{t}{  x },\;  x\right) \right\} \text{ \ if \ \ }x\neq0,\\[1ex]
\left\{  (0,0)\right\}  \;\;\; \; \; \text{ \ \ \ \ \ \  if \ \ }x=0.
\end{array}
\right.
\]}}${}$\\
{{Here, $\psi_{p}^{t}(0)=0$ while for $x \neq 0$,
$\psi_{p}^{t}(x)=\ln{x}-1+\dfrac{t}{  x }.$ Hence one can check that  $x^k=t_k $ is  the unique stationary point of  \eqref{RMPCCt} for $t:=t_k$ and the property \eqref {e}  holds since
\[
e\left(\mathcal{S}^{t_{k}}_p(x^k),\; \mathcal{S}_{p}(0)\right)=\left\Vert (0,t_k)-(0,0)\right\Vert =t_k \longrightarrow 0.
\] However for $u^k=\sqrt{t_k}$, we have $\lim_{k \rightarrow \infty} e(\mathcal{S}_{p}^{t_k}(u_k),\mathcal{S}_{p}(0))=1$. That is,  the set-valued mapping $(t,x) \rightrightarrows \mathcal{S}^{t}_p(x)$ is not H-upper semicontinuous at $(0,0)$.}}
\end{example}
The next result gives a framework that instead relies on semicontinuity properties of the set-valued mapping $\mathcal{D}$ and $\mathcal{D}^t$ to ensure that condition \eqref{e} is satisfied.
\begin{proposition}
Let $(t_{k})\downarrow0$ and let $(x^{k})\underset{k\rightarrow\infty}{\longrightarrow}\bar{x}$ such that $\mathcal{D}(\bar{x})$ is
nonempty and compact. Assume that the set-valued mapping $x \rightrightarrows \mathcal{D}(x)$ $($resp. $(t,x) \rightrightarrows \mathcal{D}^{t}(x)$$)$ is H-lower semicontinuous at any $(\bar{x}, y, u)\in\mbox{gph}\,\mathcal{D}$ $($resp. H-upper semicontinuous at $(0^+, \bar{x}))$. Then the property \eqref{e} holds.
\end{proposition}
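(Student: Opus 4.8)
The plan is to argue by contradiction with the definition of the excess $e(\cdot,\cdot)$. I would first record that, since $\mathcal{D}(\bar x)$ is nonempty and compact and $F$ is continuous, $\psi_p(\bar x)$ is attained and $\mathcal{S}_p(\bar x)$ is nonempty and compact (a closed subset of the compact set $\mathcal{D}(\bar x)$); also $e(\emptyset,\mathcal{S}_p(\bar x))=0$, so only the indices $k$ with $\mathcal{S}^{t_k}_p(x^k)\neq\emptyset$ matter. Assuming $e(\mathcal{S}^{t_k}_p(x^k),\mathcal{S}_p(\bar x))\not\to0$, one obtains $\varepsilon_0>0$, a subsequence (not relabelled), and points $(y^k,u^k)\in\mathcal{S}^{t_k}_p(x^k)$ with $d\big((y^k,u^k),\mathcal{S}_p(\bar x)\big)\ge\varepsilon_0$; the goal is then to extract a further subsequence of $(y^k,u^k)$ converging into $\mathcal{S}_p(\bar x)$, which is the contradiction.

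To produce a cluster point I would use the upper semicontinuity of $(t,x)\rightrightarrows\mathcal{D}^t(x)$ at $(0,\bar x)$: since $(t_k,x^k)\to(0,\bar x)$, for every $\varepsilon>0$ and all $k$ large, $(y^k,u^k)\in\mathcal{D}^{t_k}(x^k)\subset\mathcal{D}(\bar x)+\varepsilon\mathbb{B}_{\mathbb{R}^{m+q}}$; compactness of $\mathcal{D}(\bar x)$ then makes $(y^k,u^k)$ bounded, so along a subsequence $(y^k,u^k)\to(\bar y,\bar u)$, and passing to the limit in $d\big((y^k,u^k),\mathcal{D}(\bar x)\big)\le\varepsilon$ with $\varepsilon\downarrow0$, together with closedness of $\mathcal{D}(\bar x)$ (Proposition \ref{lem}(1)), gives $(\bar y,\bar u)\in\mathcal{D}(\bar x)$ and hence $F(\bar x,\bar y)\le\psi_p(\bar x)$.

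The main obstacle is the reverse inequality $F(\bar x,\bar y)\ge\psi_p(\bar x)$, i.e.\ showing that the cluster point is optimal for the limiting inner problem; here I would lean on the assumed lower/inner semicontinuity of $x\rightrightarrows\mathcal{D}(x)$ at every $(\bar x,y,u)\in\gph\mathcal{D}$. Since $\mathcal{S}^{t_k}_p(x^k)\neq\emptyset$, the supremum defining $\psi^{t_k}_p(x^k)$ is attained, so $F(x^k,y^k)=\psi^{t_k}_p(x^k)\ge F(x^k,y)$ for every $(y,u)\in\mathcal{D}^{t_k}(x^k)$. Fixing an arbitrary $(y,u)\in\mathcal{D}(\bar x)$, inner semicontinuity furnishes, along $x^k\to\bar x$, points $(y^k_\ast,u^k_\ast)\in\mathcal{D}(x^k)$ with $(y^k_\ast,u^k_\ast)\to(y,u)$, and $\mathcal{D}(x^k)\subset\mathcal{D}^{t_k}(x^k)$ (Proposition \ref{lem}(3)) yields $F(x^k,y^k)\ge F(x^k,y^k_\ast)$; continuity of $F$ then gives $F(\bar x,\bar y)\ge F(\bar x,y)$, and since $(y,u)\in\mathcal{D}(\bar x)$ was arbitrary, $F(\bar x,\bar y)\ge\psi_p(\bar x)$. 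Combined with the previous paragraph, $F(\bar x,\bar y)=\psi_p(\bar x)$, i.e.\ $(\bar y,\bar u)\in\mathcal{S}_p(\bar x)$, so $d\big((y^k,u^k),\mathcal{S}_p(\bar x)\big)\le\|(y^k,u^k)-(\bar y,\bar u)\|\to0$, contradicting $d\big((y^k,u^k),\mathcal{S}_p(\bar x)\big)\ge\varepsilon_0$; this proves \eqref{e}. Beyond this optimality step, the only care needed is the bookkeeping for a possibly empty $\mathcal{S}^{t_k}_p(x^k)$ and the observation that non-emptiness of that set forces the supremum defining $\psi^{t_k}_p(x^k)$ to be attained.
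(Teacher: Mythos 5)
Your proposal is correct and follows essentially the same argument as the paper: the same contradiction setup, the same use of the upper semicontinuity of $(t,x)\rightrightarrows\mathcal{D}^{t}(x)$ at $(0,\bar{x})$ together with compactness of $\mathcal{D}(\bar{x})$ to extract a cluster point $(\bar{y},\bar{u})\in\mathcal{D}(\bar{x})$ of $(y^{k},u^{k})$, and the same final step of showing $(\bar{y},\bar{u})\in\mathcal{S}_{p}(\bar{x})$ via $F(x^{k},y^{k})\geq\psi_{p}^{t_{k}}(x^{k})\geq\psi_{p}(x^{k})$. The only (immaterial) difference is in how the inequality $F(\bar{x},\bar{y})\geq\psi_{p}(\bar{x})$ is obtained: the paper invokes the lower semicontinuity of the value function $\psi_{p}$, which follows from the lower semicontinuity of $\mathcal{D}$ and the continuity of $F$, whereas you inline the standard proof of that fact by producing a recovery sequence $(y_{\ast}^{k},u_{\ast}^{k})\in\mathcal{D}(x^{k})\subset\mathcal{D}^{t_{k}}(x^{k})$ converging to an arbitrary $(y,u)\in\mathcal{D}(\bar{x})$ --- both rest on exactly the same hypothesis.
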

\begin{proof}
Assuming that the statement is not true, that is, (\ref{e}) does not hold, then  there exist $\delta >0$ and a sequence $(y^{k},u^{k})_{k}$ such that $(y^{k},u^{k})\in\mathcal{S}^{t_{k}}_p(x^{k})$ and
\begin{equation}\label{not}
d((y^{k},u^{k}),\;\mathcal{S}_p(\bar{x}))\geq \delta \;\mbox{ for all }\; k.
\end{equation}
Therefore, since $(y^{k},u^{k})\in \mathcal{D}^{t_{k}}(x^{k})$, it holds that
\[
d((y^{k},u^{k}),\;\mathcal{D}(\bar{x}))\leq e(\mathcal{D}^{t_{k}}(x^{k}
),\mathcal{D}(\bar{x}))
\]
  with $e(\mathcal{D}^{t_{k}}(x^{k}),\mathcal{D}(\bar{x}))\longrightarrow 0$ as $k\longrightarrow\infty$,  since the set-valued mapping $(t,x) \rightrightarrows \mathcal{D}^{t}(x)$ is H-upper semicontinuous at $(0^+,\bar{x})$. Hence, there
exists a sequence  $(z^{k},w^{k})$ in $\mathcal{D}(\bar{x})$ such that
\[
\left\Vert y^{k}-z^{k}\right\Vert \longrightarrow0 \;\, \text{ and }\;\, \left\Vert
u^{k}-w^{k}\right\Vert \longrightarrow 0
\]
and due to the compactness of the set $\mathcal{D}(\bar{x})$, the sequence $(z^{k}, w^{k})$ (up to a subsequence) converges to a point $(\bar{y},\bar
{u})\in\mathcal{D}(\bar{x})$ and so does the subsequence $(y^{k},u^{k})_k$.

Let us prove now that $(\bar{y},\bar{u})\in\mathcal{S}_{p}(\bar{x})$. Indeed,
from assertion (iii) in Proposition \ref{lem}, we get
\[
F(x^{k},y^{k})-\psi_{p}(x^{k})\geq F(x^{k},y^{k})-\psi_p^{t_{k}}
(x^{k})\geq0
\]
as $(y^{k},u^{k})\in\mathcal{S}^{t_{k}}_p(x^{k})$. On the other
hand, since $F(\cdot,\cdot)$ is continuous and $\mathcal{D}(\cdot)$ is H-lower
semicontinuous, the function $\psi_{p}(\cdot)$ is lower semicontinuous at
$\bar{x}$. Hence,
\[
F(\bar{x},\bar{y})-\psi_{p}(\bar{x})\geq F(\bar{x},\bar{y})-\underset
{k\rightarrow\infty}{\text{ }\lim\inf}\psi_{p}(x^{k})=\underset
{k\rightarrow\infty}{\text{ }\lim\sup}(F(x^{k},y^{k})-\psi_{p}(x^{k}
))\geq0.
\]
Consequently, $(\bar{y},\bar{u})\in\mathcal{S}_{p}(\bar{x})$ and from the following equality
\[
d((y^{k},u^{k}),\;\mathcal{S}_{p}(\bar{x}))\leq\left\Vert (y^{k},u^{k})-(\bar
{y},\bar{u})\right\Vert
\]
we get $\underset{k\longrightarrow\infty}{\lim}d((y^{k},u^{k}), \;\mathcal{S}_{p}
(\bar{x}))=0$, which is a contradiction to (\ref{not}).  \hfill \qed
\end{proof}

In Theorem \ref{ConvergenceResult}, the requirement that the sequence $(x^{k})_k$ be such that each $x^k$ is a stationary point of problem
\eqref{RMPCCt} for $t:=t^k$ assumes the existence of a vector $(y^{t},u^{t}, \alpha^{t}, \beta^{t}, \gamma^{t}, \mu^{t},\delta^{t})$ such that the optimality conditions  \eqref{Er0}--\eqref{Er5} are satisfied. A priori, it is not clear
whether such a vector actually exists. {{According to Theorem \ref{Optimality conditions for RMPCC(t)}, such a vector can exist if the point $x^{k}$ is an upper-level regular local optimal solution for \eqref{RMPCCt} for $t:=t^k$}}, the set-valued mapping $\mathcal{S}^{t_k}_p$ is inner semicontinuous at $(x^k, y^k, u^k)$ for some $(y^k, u^k)\in \mathcal{S}^{t_k}_p (x^k)$, and the CQ \eqref{CQ1} holds at $(x^k, y^k, u^k)$.  The question is, how can we ensure that these three conditions are automatically satisfied as our sequence gets close to the point of interest.  Theorem \ref{Continuity of set-valued maps}(iii) has already given some conditions  ensuring that we can find a neighborhood $U\times V$ of some reference point $(\bar{x},(\bar{y},\bar{u}))$ such that for all $t\downarrow0$, $\mathcal{S}_{p}^{t}$ is inner semicontinuous at all points $(x,y,u)$
with $x\in U$ and $(y,u)\in V\cap\mathcal{S}_{p}^{t}(x)$.

Theorem \ref{TheoremFinal} below, which is a variant of \cite[Proposition 2.2]{Qi}, establishes  sufficient conditions for conditions \eqref{UMFCQ} and \eqref{CQ1} to hold at all points in a neighborhood of our reference point. Since the challenging part of this result relies on the satisfaction of the qualification conditions $(A_{1}^{m})$ and $(A_{2}^{m})$ at our reference point, we first state the stability of $(A_{1}^{m})$ and $(A_{2}^{m})$, in the sense that if they hold at a given point, they also hold at all points in some neighborhood of the point.

\begin{proposition}\label{lim}
Assume that conditions $(A_{1}^{m})$ and $(A_{2}^{m})$ hold at $(\bar{x}
,\bar{y},\bar{u})$. Then there exists a  neighborhood $U\times V$ of $(\bar
{x},(\bar{y},\bar{u}))$ such that $(A_{1}^{m})$ and $(A_{2}^{m})$ hold at any { $(x,y,u)$ with $x\in U$ and  $(y,u)\in V\cap \mathcal{D}(x)$}.
\end{proposition}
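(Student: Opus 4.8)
The plan is to argue by contradiction, after first collapsing the two conditions into one. \emph{Reduction.} At any point $(x,(y,u))$ the relations defining the $M$-multiplier set $\Lambda_{y}^{em}(x,y,u,0)$ form a subset of those defining $\Lambda^{em}(x,y,u,0)$, the latter carrying in addition the equation $\nabla_{x}\mathcal{L}(x,y,u)^{\top}\beta+\nabla_{x}g(x,y)^{\top}\gamma=0$; hence $\Lambda^{em}(x,y,u,0)\subseteq\Lambda_{y}^{em}(x,y,u,0)$ always holds. Now $(A_{2}^{m})$ says exactly that every element of $\Lambda_{y}^{em}(x,y,u,0)$ also satisfies that extra equation, i.e.\ that the two sets coincide, while $(A_{1}^{m})$ says that $\Lambda^{em}(x,y,u,0)=\{0\}$. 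Consequently $(A_{1}^{m})$ and $(A_{2}^{m})$ hold simultaneously at a point if and only if $\Lambda_{y}^{em}(x,y,u,0)=\{0\}$, so it is enough to show that $\Lambda_{y}^{em}(\bar{x},\bar{y},\bar{u},0)=\{0\}$ forces $\Lambda_{y}^{em}(x,y,u,0)=\{0\}$ for all $(x,(y,u))$ in some neighborhood of $(\bar{x},(\bar{y},\bar{u}))$.

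\emph{Contradiction, normalization, and stabilization of the index sets.} Suppose this fails. Then there are sequences $(x^{k},(y^{k},u^{k}))\to(\bar{x},(\bar{y},\bar{u}))$ and $(\beta^{k},\gamma^{k})\in\Lambda_{y}^{em}(x^{k},y^{k},u^{k},0)$ with $(\beta^{k},\gamma^{k})\neq0$; since this set is a cone, I normalize to $\|(\beta^{k},\gamma^{k})\|=1$ and pass to a subsequence with $(\beta^{k},\gamma^{k})\to(\beta,\gamma)$, $\|(\beta,\gamma)\|=1$. Extracting once more, the index sets $\eta(x^{k},y^{k},u^{k})$, $\theta(x^{k},y^{k},u^{k})$, $\nu(x^{k},y^{k},u^{k})$ are constant, say equal to $\eta^{\ast}$, $\theta^{\ast}$, $\nu^{\ast}$, and for each $i\in\theta^{\ast}$ a single alternative of the $M$-sign condition (the $M$-analogue of \eqref{St8}) holds along the whole subsequence. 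Writing $\eta,\theta,\nu$ for the corresponding index sets at $(\bar{x},\bar{y},\bar{u})$, I would then check, using continuity of $g$ (so that strict inequalities persist) together with the complementarity structure of the feasible points involved, the inclusions $\eta\subseteq\eta^{\ast}\subseteq\eta\cup\theta$, $\nu\subseteq\nu^{\ast}\subseteq\theta\cup\nu$, and $\theta^{\ast}\subseteq\theta$, whence the partition $\theta=\theta^{\ast}\cup(\eta^{\ast}\setminus\eta)\cup(\nu^{\ast}\setminus\nu)$.

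\emph{Passing to the limit.} Now I pass to the limit in the relations defining $\Lambda_{y}^{em}(x^{k},y^{k},u^{k},0)$. The equation $\nabla_{y}\mathcal{L}(x^{k},y^{k},u^{k})^{\top}\beta^{k}+\nabla_{y}g(x^{k},y^{k})^{\top}\gamma^{k}=0$ passes to the limit by continuity; $\eta\subseteq\eta^{\ast}$ gives $\gamma_{\eta}=\lim\gamma_{\eta}^{k}=0$; and $\nu\subseteq\nu^{\ast}$ gives $\nabla_{y}g_{\nu}(\bar{x},\bar{y})\beta=\lim\nabla_{y}g_{\nu}(x^{k},y^{k})\beta^{k}=0$. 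For $i\in\theta$ I use the partition above: on $\eta^{\ast}\setminus\eta$ one has $\gamma_{i}=0$, on $\nu^{\ast}\setminus\nu$ one has $\nabla_{y}g_{i}(\bar{x},\bar{y})\beta=0$, and on $\theta^{\ast}$ the $M$-sign condition is preserved in the limit: if $\gamma_{i}^{k}<0$ and $\nabla_{y}g_{i}(x^{k},y^{k})\beta^{k}<0$ for all $k$ then in the limit either both inequalities stay strict or their product is $0$, and if $\gamma_{i}^{k}\nabla_{y}g_{i}(x^{k},y^{k})\beta^{k}=0$ for all $k$ then the product vanishes in the limit. In every case $(\beta,\gamma)$ satisfies the $M$-sign condition at $i$, so $(\beta,\gamma)\in\Lambda_{y}^{em}(\bar{x},\bar{y},\bar{u},0)$ with $\|(\beta,\gamma)\|=1$, contradicting the reduction.

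\emph{Main obstacle.} The one genuinely delicate point is the control of the index sets in the second step: indices that are degenerate at $(\bar{x},\bar{y},\bar{u})$ (those in $\theta$) may migrate into the $\eta$- or $\nu$-type sets along the sequence, and one must exclude the reverse, namely that an $\eta$- or $\nu$-type index becomes inactive, since otherwise the constraints $\gamma_{\eta}=0$ or $\nabla_{y}g_{\nu}(\bar{x},\bar{y})\beta=0$ would be lost in the limit. Pinning down precisely the inclusions above is where the feasibility/complementarity structure of the points at which $(A_{1}^{m})$ and $(A_{2}^{m})$ are tested enters; once this bookkeeping is done, the remaining limiting argument is routine and parallels the one in the proof of Theorem~\ref{ConvergenceResult}.
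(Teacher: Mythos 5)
Your argument is correct and, at its core, runs along the same lines as the paper's proof: a contradiction argument with a normalized sequence of multipliers, stabilization of the index sets along a subsequence, and passage to the limit in the relations defining the multiplier set. The genuine difference is your opening reduction. You observe that $\Lambda^{em}(x,y,u,0)\subseteq\Lambda_{y}^{em}(x,y,u,0)$ always holds, that $(A_{2}^{m})$ is exactly the reverse inclusion, and hence that $(A_{1}^{m})\wedge(A_{2}^{m})$ is equivalent to the single condition $\Lambda_{y}^{em}(x,y,u,0)=\{0\}$ (the $M$-analogue of $(A_{3}^{c})$); the paper only records the one-way implications $(A_{1}^{c})\impliedby(A_{3}^{c})\implies(A_{2}^{c})$ and consequently treats the failure of $(A_{1}^{m})$ and the failure of $(A_{2}^{m})$ as two separate cases with nearly identical computations, the second ending with an appeal to $(A_{2}^{m})$ and then $(A_{1}^{m})$ at the limit point. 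Your equivalence collapses these two cases into one limiting argument, which is a real economy; the rest of your limit analysis (splitting $\theta$ into $\theta^{\ast}$, $\eta^{\ast}\setminus\eta$ and $\nu^{\ast}\setminus\nu$ and checking the $M$-sign condition on each piece) is exactly the bookkeeping the paper performs via the auxiliary multipliers $\bar{\gamma}^{k}$.

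On the step you flag as delicate: you are right that continuity alone only yields $\theta^{\ast}\subseteq\theta$, $\eta^{\ast}\subseteq\eta\cup\theta$ and $\nu^{\ast}\subseteq\theta\cup\nu$, and that the reverse inclusions $\eta\subseteq\eta^{\ast}$, $\nu\subseteq\nu^{\ast}$ (hence the partition of $\theta$) additionally require exact complementarity $u_{i}^{k}g_{i}(x^{k},y^{k})=0$ at the test points, i.e., that the argument really concerns points of $\mbox{gph}\,\mathcal{D}$ rather than arbitrary $(x,(y,u))\in U\times V$. The paper's proof asserts ``$i\in\eta^{k}$ for all $i\in\eta$ and $i\in\nu^{k}$ for all $i\in\nu$'' without justification and thus relies on the same implicit restriction, so you have not introduced a gap the paper avoids; you have merely made explicit the one assertion in the published proof that deserves a supporting sentence.
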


\begin{theorem}\label{TheoremFinal}
Let $\bar{x}$ be upper-level regular  and suppose that  $(A_{1}^{m})$ and
$(A_{2}^{m})$ hold at $(\bar{x},\bar{y},\bar{u})$ with $(\bar{y},\bar{u}
)\in\mathcal{D}(\bar{x})$. Then there exist neighborhoods $U$ and $V$ of
$\bar{x}$ and $(\bar{y},\bar{u})$, respectively, such that for all $t$ close to $0^+$, the
CQs \eqref{UMFCQ} and \eqref{CQ1} for \eqref{RMPCCt} are
satisfied at all points $(x,y,u)$ with $x\in U\cap X$ and $(y,u)\in V\cap
\mathcal{D}^{t}(x)$.
\end{theorem}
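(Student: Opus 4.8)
\emph{Plan.} I would obtain the statement by intersecting three neighbourhoods of the reference point: one on which \eqref{UMFCQ} holds, one on which $(A_{1}^{m})$ and $(A_{2}^{m})$ hold (this is Proposition~\ref{lim}), and one on which \eqref{CQ1} for \eqref{RMPCCt} holds at all relevant points for every small $t$. Only the first and third have to be proved; both go by contradiction, the first elementarily and the third by mimicking the proof of Theorem~\ref{ConvergenceResult}. Note that, since $(\bar y,\bar u)\in\mathcal{D}(\bar x)$, the index sets satisfy $\{1,\dots,q\}=\theta\cup\eta\cup\nu$.

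\emph{Stability of \eqref{UMFCQ}.} If it failed along some $x^{k}\to\bar x$ in $X$, there would be $\alpha^{k}\ge 0$, $\|\alpha^{k}\|=1$, with $\nabla G(x^{k})^{\top}\alpha^{k}=0$ and $(\alpha^{k})^{\top}G(x^{k})=0$, so $\mathrm{supp}\,\alpha^{k}\subseteq I_{G}(x^{k})\subseteq I_{G}$ for $k$ large by continuity of $G$; passing to a convergent subsequence yields $0\neq\alpha\ge 0$ with $\nabla G(\bar x)^{\top}\alpha=0$ and $\alpha^{\top}G(\bar x)=0$, contradicting \eqref{UMFCQ} at $\bar x$.

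\emph{The third claim.} If it fails, there are $t_{k}\downarrow 0$ and $(x^{k},(y^{k},u^{k}))\to(\bar x,(\bar y,\bar u))$ with $x^{k}\in X$, $(y^{k},u^{k})\in\mathcal{D}^{t_{k}}(x^{k})$, at which \eqref{CQ1} for $t:=t_{k}$ is violated, i.e.\ there is a nonzero $(\beta^{k},\gamma^{k},\mu^{k},\delta^{k})$ satisfying the premise of that implication. Following the proof of Theorem~\ref{ConvergenceResult}, I would switch to the modified multipliers $\tilde\beta^{k}:=-\beta^{k}$ and $\tilde\gamma^{k},\tilde\mu^{k}$ given by \eqref{S8}, which recasts the premise as the unperturbed $y$-block system \eqref{S10}--\eqref{S11} (there is no $\alpha$ and no $x$-derivative in \eqref{CQ1}), with $\mathrm{supp}\,\tilde\mu^{k}\subseteq\theta\cup\eta$ and $\mathrm{supp}\,\tilde\gamma^{k}\subseteq\theta\cup\nu$ for $k$ large, as in \eqref{S12}--\eqref{S14}. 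One checks $\chi_{k}:=(\tilde\beta^{k},\tilde\gamma^{k},\tilde\mu^{k},\delta^{k}_{\eta\cup\nu})\neq 0$ for $k$ large: if it vanished, then $\beta^{k}=\gamma^{k}=\mu^{k}=0$ and $\mathrm{supp}\,\delta^{k}\subseteq\theta$, yet $i\in\mathrm{supp}\,\delta^{k}$ forces $u^{k}_{i}>0$, hence $\tilde\gamma^{k}_{i}=\delta^{k}_{i}u^{k}_{i}\neq 0$. By the positive homogeneity of the premise I normalise $\|\chi_{k}\|=1$, extract a subsequence $\chi_{k}\to\chi$ with $\|\chi\|=1$, and pass to the limit in \eqref{S10}--\eqref{S11}; the terms $\delta^{k}_{i}u^{k}_{i}$ with $i\in\eta$ vanish since $u^{k}_{i}\to\bar u_{i}=0$, and the products $\delta^{k}_{i}u^{k}_{i}$, $\delta^{k}_{i}g_{i}(x^{k},y^{k})$ with $i\in\theta$ are, by \eqref{S8}, components of $\chi_{k}$ and so stay bounded after normalisation. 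The limit places $(-\tilde\beta,-\tilde\gamma)$ in $\Lambda^{em}_{y}(\bar x,\bar y,\bar u,0)$, the $i\in\theta$ sign relation being obtained by the same case analysis as in the proof of Theorem~\ref{ConvergenceResult}; then $(A_{2}^{m})$ upgrades this to $(-\tilde\beta,-\tilde\gamma)\in\Lambda^{em}(\bar x,\bar y,\bar u,0)$, and $(A_{1}^{m})$ gives $\tilde\beta=\tilde\gamma=0$. Consequently $\tilde\mu=0$ from \eqref{S11}, and $\delta_{\eta\cup\nu}=0$ because $\delta_{i}>0$ with $i\in\eta$ would give $\tilde\mu_{i}=\delta_{i}g_{i}(\bar x,\bar y)<0$ while with $i\in\nu$ it would give $\tilde\gamma_{i}=\delta_{i}\bar u_{i}>0$. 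Hence $\chi=0$, contradicting $\|\chi\|=1$.

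\emph{Main obstacle.} The crux is exactly the limit passage in the third claim: because of the relaxation $-u_{i}g_{i}\le t$ with $t_{k}\downarrow 0$, a single component $\delta^{k}_{i}$ with $i\in\theta$ need not stay bounded even after normalisation, so every limit must be routed through the bounded products $\delta^{k}_{i}u^{k}_{i}$ and $\delta^{k}_{i}g_{i}(x^{k},y^{k})$ that \eqref{S8} builds into $\tilde\gamma^{k}$ and $\tilde\mu^{k}$, and the $i\in\theta$ sign relation needs the same contradiction scheme as in the proof of Theorem~\ref{ConvergenceResult}. Everything else --- the algebra of \eqref{S8}--\eqref{S11}, the intersection of the three neighbourhoods, and the appeal to Proposition~\ref{lim} --- is routine.
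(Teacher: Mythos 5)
Your proposal is correct, but for the key part --- the stability of \eqref{CQ1} --- it takes a genuinely different route from the paper. You argue by contradiction along a sequence $t_k\downarrow 0$, $(x^k,(y^k,u^k))\to(\bar x,(\bar y,\bar u))$, normalise the aggregated multipliers $\chi_k$, and pass to the limit so that $(A_1^m)$ and $(A_2^m)$ are invoked only at the reference point $(\bar x,\bar y,\bar u)$; this essentially replays the normalisation and boundedness machinery of Theorem~\ref{ConvergenceResult}, including the careful routing of the possibly unbounded $\delta^k_\theta$ through the products $\delta^k_iu^k_i$ and $\delta^k_ig_i(x^k,y^k)$. The paper instead works pointwise and limit-free: it first uses Proposition~\ref{lim} to transfer $(A_1^m)$ and $(A_2^m)$ to every $(x,(y,u))$ in a neighbourhood, and then, for a fixed nearby $(x,(y,u))$ and fixed $t$, shows by a short algebraic manipulation (the substitution $\tilde\gamma_i:=\gamma_i$ on $\mathrm{supp}\,\gamma$, $\tilde\gamma_i:=-\delta_iu_i$ on $\mathrm{supp}\,\delta$, $0$ otherwise) that any multiplier satisfying the premise of \eqref{CQ1} yields an element of $\Lambda_y^{em}(x,y,u,0)$, whence $(A_2^m)$ and $(A_1^m)$ \emph{at that very point} force it to vanish --- no compactness, no normalisation, and no limit passage are needed, since all the sequential work is already encapsulated in Proposition~\ref{lim}. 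Both arguments are sound; the paper's is shorter and isolates the delicate limiting analysis in one place, while yours has the mild advantage of not needing the qualification conditions at the perturbed points (so the appeal to Proposition~\ref{lim} in your plan is actually superfluous for \eqref{CQ1}, being needed only if one insists on the paper's pointwise strategy). One small imprecision: the premise of \eqref{CQ1} is the homogeneous system without the $\nabla_yF$ term, so it is not literally the $y$-block of \eqref{S10}; this is harmless since after your normalisation the limit system is the homogeneous one in either case.
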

The proofs of these two results are given in Appendix \ref{Proof of Proposition lim} and Appendix  \ref{Proof of Theorem TheoremFinal},
respectively.

\section{Practical implementation and numerical experiments}\label{Sec:Numerical}
Based on the convergence results from Sections \ref{Computing global and local optimal solutions} and \ref{Computing C-stationary points}, we can consider two possible ways to solve problem \eqref{MPCC} by means of the Scholtes relaxation. For the first option, we need a solver for the minmax problem \eqref{RMPCCt} for a fixed value of $t>0$. Given the complicated nature of the coupled inner feasible set of the problem, this is a very challenging problem, and we are not aware of a handy tool to efficiently solve it.
However, we can easily have access to tractable
solvers for systems of equations. Hence, we are going to focus our attention here at computing C-stationary points for problem \eqref{MPCC} based on a completely detailed form of \eqref{Er0}--\eqref{Er5}.
Clearly, it suffices to provide a detailed form for condition \eqref{Er0}.
From the definition of {$\mathcal{S}^{t}_p$}, we have the
equivalence
\begin{equation}\label{ULVF}
(x^t, y^t, u^t)\in\mathrm{gph}\mathcal{S}^{t}_p \quad\Longleftrightarrow
\quad\left[ (y^t, u^t)\in\mathcal{D}^{t}(x^t) \;\; \mbox{ and } \;\; F(x^t, y^t) {-}
\psi^{t}_p(x^t)\geq0\right],
\end{equation}
which corresponds to the optimal value reformulation of the underlying
parametric optimization problem. Considering the challenge in dealing with the
optimal value function in the process of solving bilevel optimization problems
(see, e.g., \cite{FischerZemkohoZhou,FliegeTinZemkoho,TinZemkohoLevenberg,ZemkohoZhouTheoretical}), we will instead consider the \emph{KKT-type
approach} here, which ensures that if $(x^t, y^t,
u^t)\in\mathrm{gph}\mathcal{S}^{t}_p$, then we have
\begin{equation}\label{Toumana}
\nabla_{y, u}F(x^t, y^t) \in  N_{\mathcal{D}^{t}(x^t)}(y^t, u^t).
\end{equation}
\begin{figure}[htp]
    \centering
 	\includegraphics[height=6.0cm]{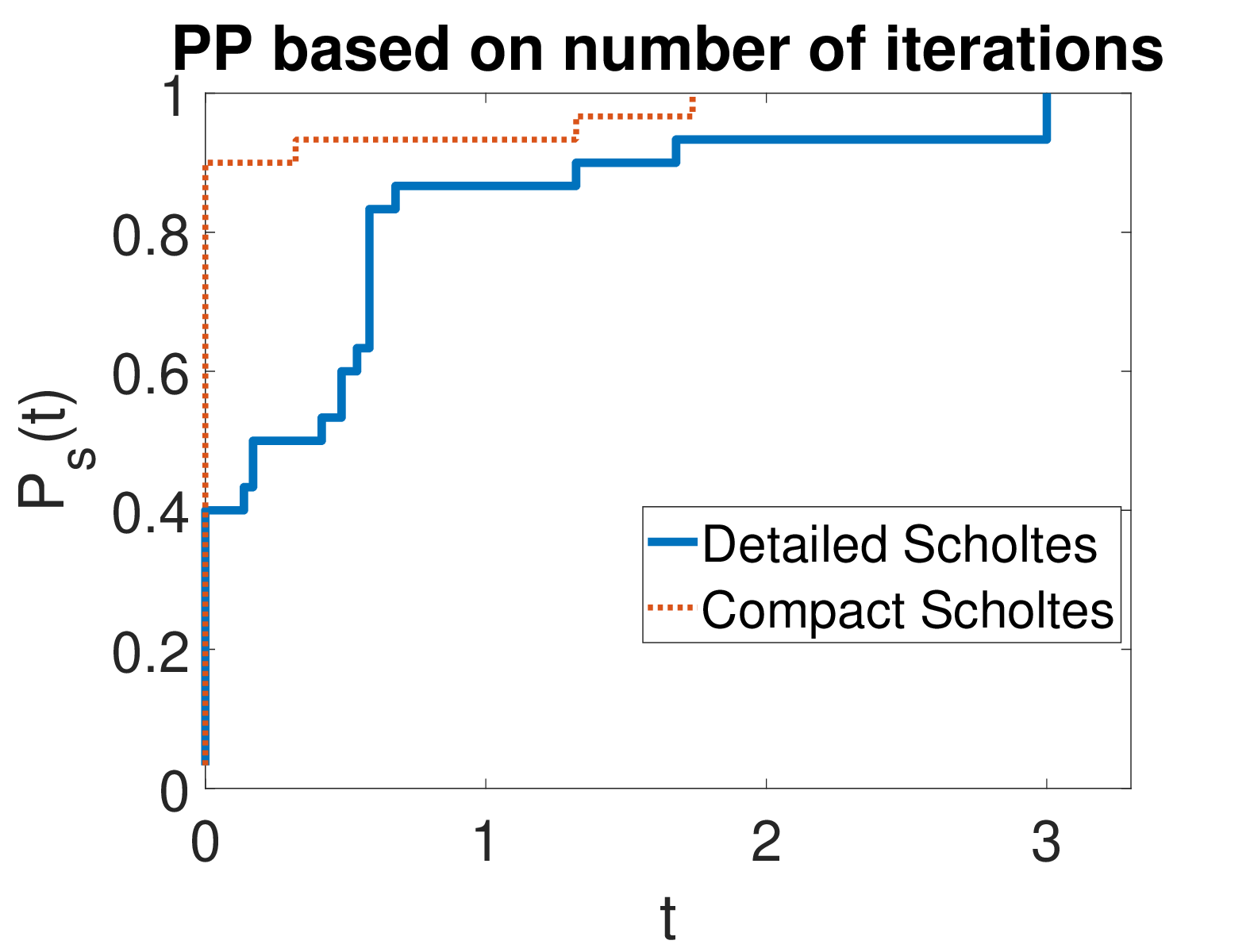}
       	\includegraphics[height=6.0cm]{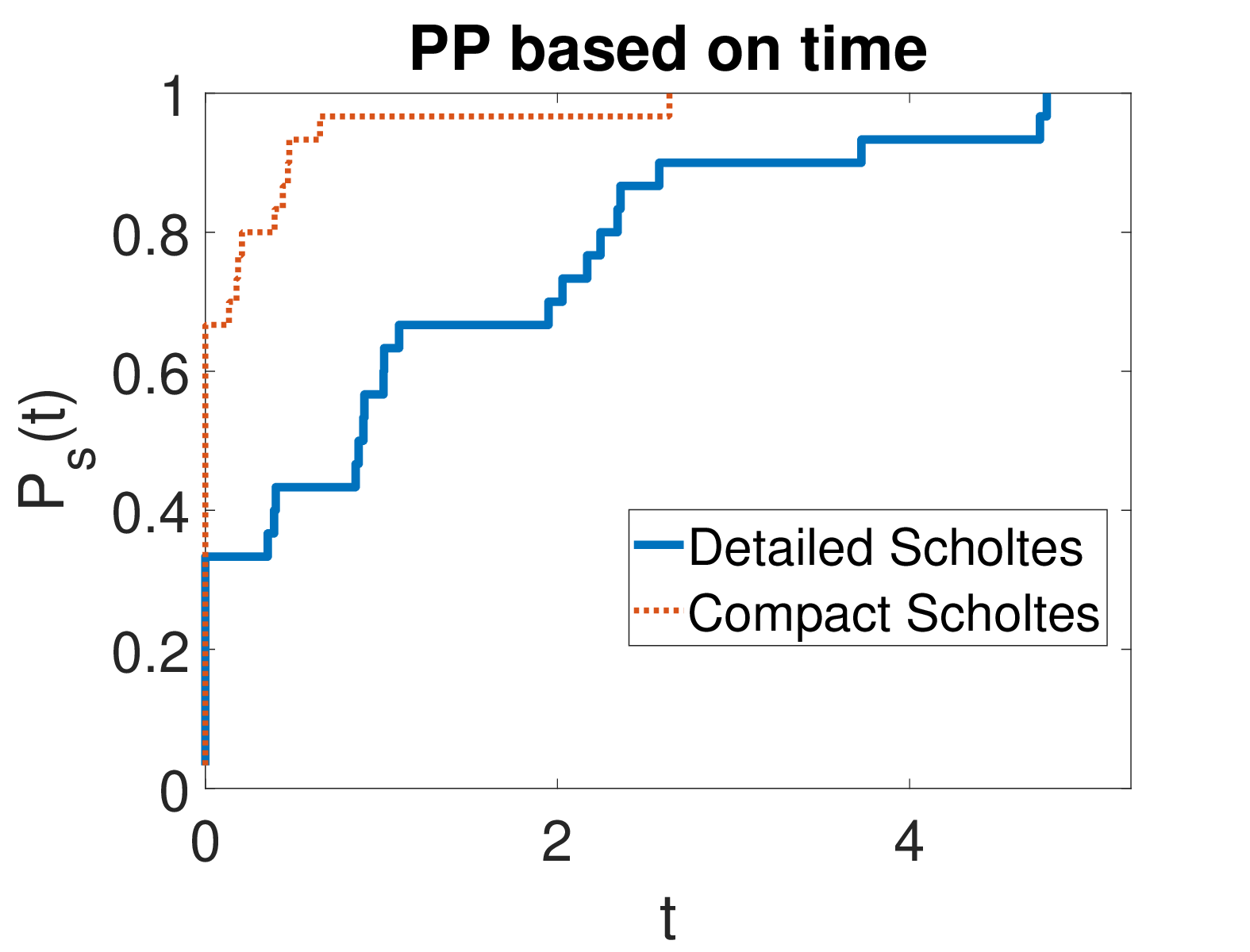}
        	\includegraphics[height=6.0cm]{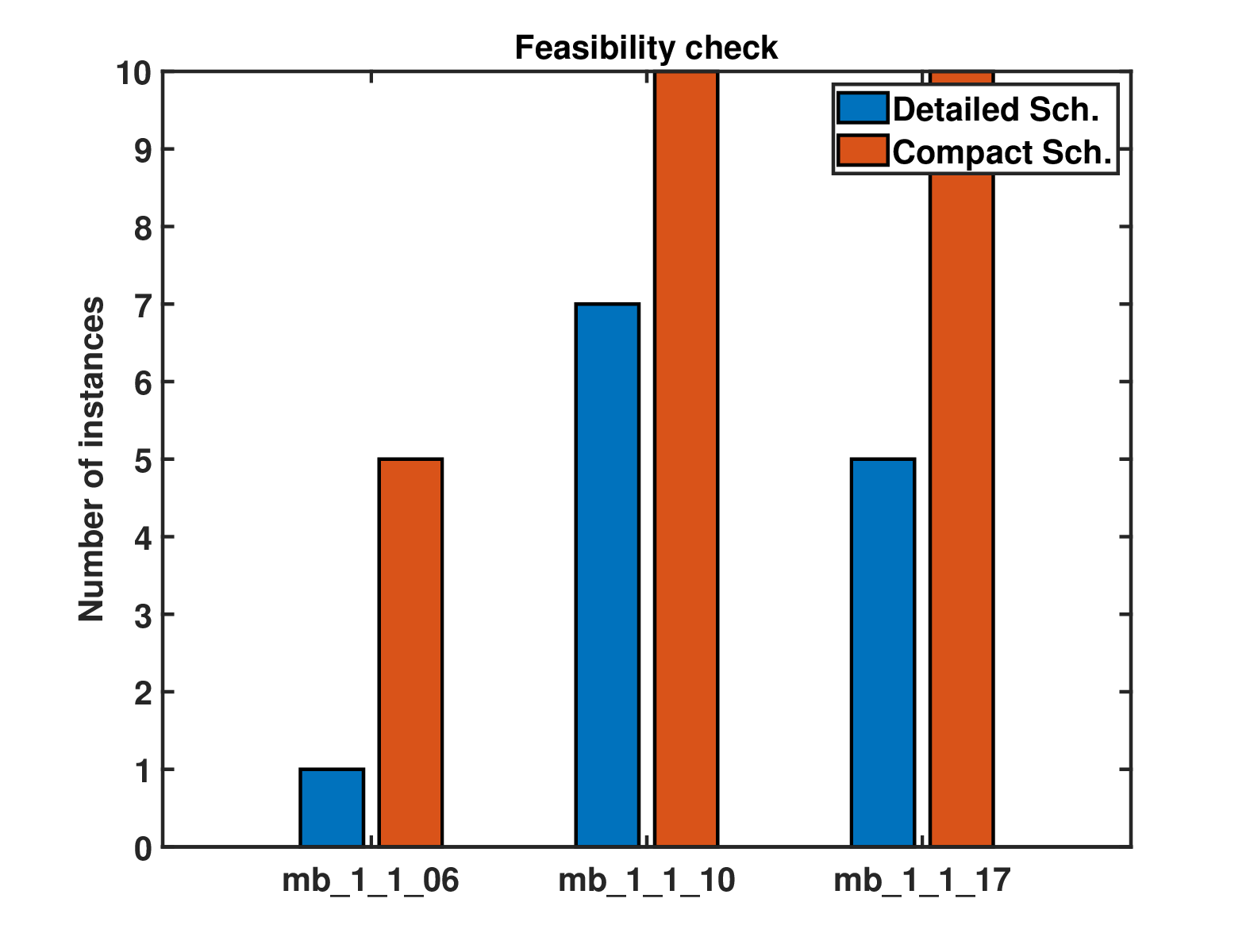}
      \includegraphics[height=6.0cm]{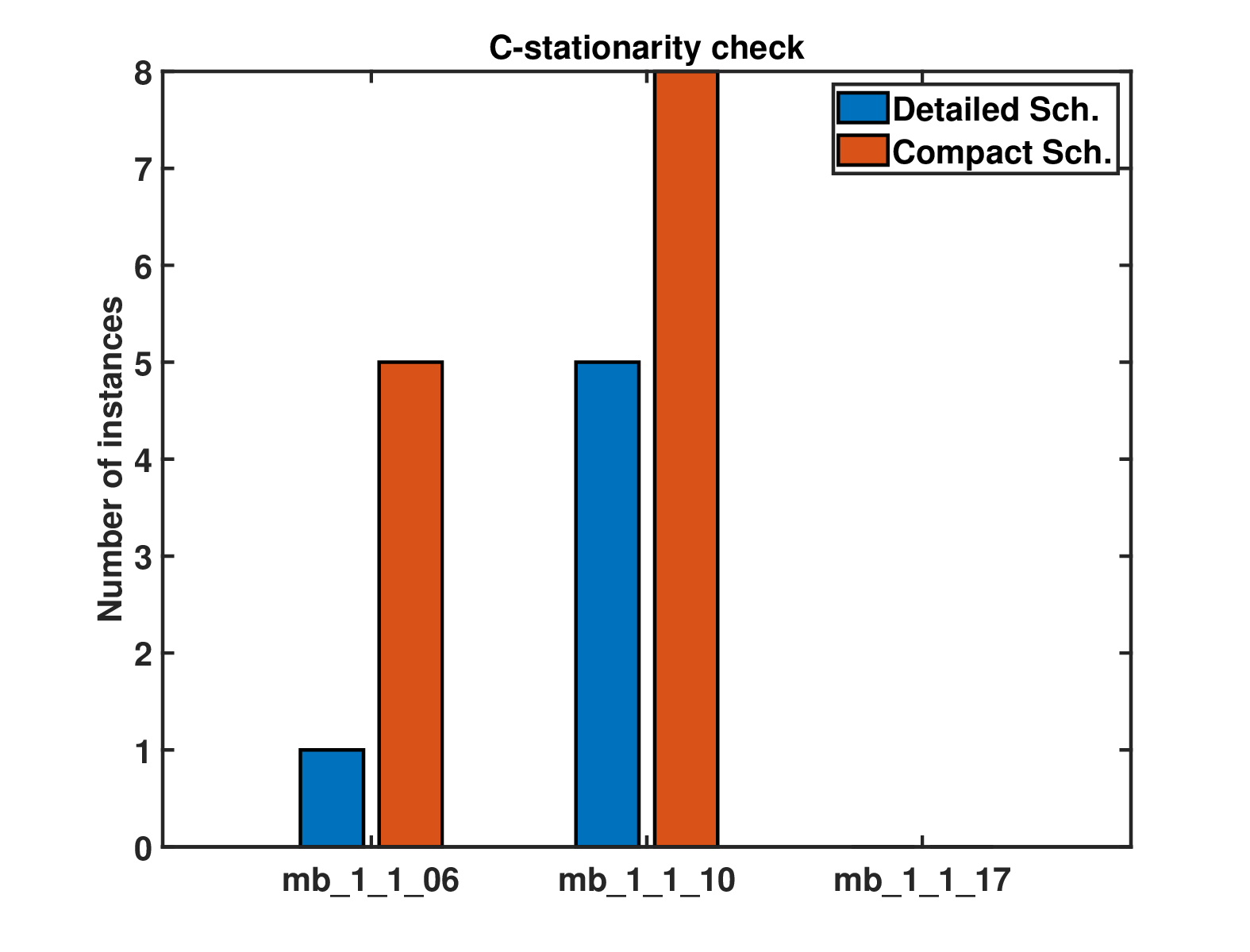}
      \caption{Performance profile (PP) and feasibility and C-stationary check for detailed versus compact form with examples satisfying the lower-level convexity and regularity conditions (Experiment I).}\label{fig1}
\end{figure}
If the point $(x^t, y^t, u^t)$ satisfies a certain CQ for the conditions defining the set $D^t(x^t)$ in \eqref{Dt}, then we can find some $(\beta^t, \delta^t, \gamma^t)$ such that the conditions \eqref{Er1-f1}--\eqref{Er1-f2} and  \eqref{Er1-f3}--\eqref{Er5} hold. This clearly means that if some form of sufficient condition is satisfied, then a point satisfying \eqref{Er1-f1}--\eqref{Er1-f2} and  \eqref{Er1-f3}--\eqref{Er5} could also satisfy condition \eqref{Er0}. CQs to ensure that a point satisfying the latter condition implies the fulfilment of \eqref{Er1-f1}--\eqref{Er1-f2} and  \eqref{Er1-f3}--\eqref{Er5} and sufficient conditions to guarantee the equivalence between the two is out of the scope of this paper and will be analyzed with more care in a separate piece of work.

The main point to take from these observations is that solving the system \eqref{Er1}--\eqref{Er5} can be a sensible proxy to compute stationary points for \eqref{RMPCCt} for each fixed $t>0$. This is what we are going to do here, just to give a flavour of the potential for the Scholtes relaxation method in the context of the pessimistic bilevel optimization problem. And numerical examples presented later in this section show that solving \eqref{Er1}--\eqref{Er5} as $t\downarrow 0$ presents a good potential in computing optimal solutions and/or C-stationary points for problem \eqref{PBP}. 

Clearly, the focus of the numerical calculations in this section will be to solve the system of optimality conditions in \eqref{Er1}--\eqref{Er5} by means of the well-known Fischer-Burmeister  function \cite{FischerSpecial1992}. Denoting by
$\zeta^t:=\left(x^t, y^t, u^t, \alpha^t, \beta^t, \gamma^t, \delta^t, \mu^t\right)$,
this can be reformulated into
\begin{equation}\label{MainEquation}
\Phi^{S}_t(\zeta^t) =0,
\end{equation}
which is a square $n + 2m + p + 3q +1$ by $n + 2m +p + 3q + 1$  system of
equations. The size of this system can be reduced by substituting
$
-\mu^{t} = \nabla_{y}g(x^{t}, y^{t})\beta^{t} + \delta^{t} g(x^{t}, y^{t})
$
from  \eqref{Er1-f2} into \eqref{Er1-f4}.  With this transformation, the corresponding version of \eqref{MainEquation} will be a square $n + 2m + p + 2q +1$ by $n + 2m +p + 2q + 1$  system of equations. As part of the analysis in this section, we will compare the behavior of Algorithm \ref{algorithm 1} in these two scenarios. In Step 1, we will solve the corresponding version of the system in \eqref{MainEquation}. However, to be able to use an off-the-shelf solver for smooth equations, we consider the following smooth approximation $\phi^\epsilon :\mathbb{R}^2 \rightarrow \mathbb{R}$ of the Fischer-Burmeister function to deal with the corresponding version of the  complementarity conditions \eqref{Er011}--\eqref{Er5}:
\begin{equation}
\phi^\epsilon (a, b):=\sqrt{a^2 + b^2 + 2\epsilon} - a +b.
\end{equation}
The smoothing parameter $\epsilon>0$ helps to guarantee the differentiability of the function at the point $(a,b)=(0,0)$. Note that it is well known that
\begin{equation}
\phi^\epsilon (a, b) = 0  \;\;\; \Longleftrightarrow \;\;\; a >0, \;\;\; b<0, \;\;\; ab=-\epsilon.
\end{equation}
Hence, the corresponding detailed and compact smooth approximation of equation \eqref{MainEquation} become
\begin{align}\label{Detailed}
    \Phi^{S_{{d}}}_{t}(\zeta) : = \left( \begin{array}{c}
     \nabla_{x}F(x^{t}, y^{t}) + \nabla G(x^{t})^{\top}\alpha^{t}+ \nabla_{x}\mathcal{L}(x^{t}, y^{t},
u^{t})^{\top}\beta^{t}+ \nabla_{x}g(x^{t}, y^{t})^{\top}\left( \delta^{t} u^{t} - \gamma^{t}\right)\\
{
\nabla_{y}F(x^{t}, y^{t}) + \nabla_{y}\mathcal{L}(x^{t}, y^{t}, u^{t})^{\top}\beta^{t}+ \nabla_{y}g(x^{t}
y^{t})^{\top}\left( \delta^{t} u^{t} - \gamma^{t}\right) }\\
-{\nabla_{y}g(x^{t}, y^{t})\beta^{t}+ \mu^{t}+ \delta^{t} g(x^{t}, y^{t})}\\
\phi^\epsilon(\alpha_{j}^{t}, G_{j}(x^{t}))_{j=1, \ldots p}\\
\phi^\epsilon(\gamma_{i}^{t}, g_{i}(x^{t}, y^{t}))_{i=1, \ldots q}\\
\phi^\epsilon(\mu_{i}^{t}, -u_{i}^{t})_{i=1, \ldots q}\\
\phi^\epsilon(\delta_{i}^{t}, -u_{i}^{t}g_{i}(x^{t}, y^{t})-t)_{i=1, \ldots q}\\
{\mathcal{L}(x^{t}, y^{t}, u^{t})}
\end{array}
    \right) = 0
\end{align}
and
\begin{align}\label{Compact}
    \Phi^{S_{{c}}}_{t}(\zeta) : = \left( \begin{array}{c}
     \nabla_{x}F(x^{t}, y^{t}) + \nabla G(x^{t})^{\top}\alpha^{t}+ \nabla_{x}\mathcal{L}(x^{t}, y^{t},
u^{t})^{\top}\beta^{t}+ \nabla_{x}g(x^{t}, y^{t})^{\top}\left( \delta^{t} u^{t} - \gamma^{t}\right)\\
{
\nabla_{y}F(x^{t}, y^{t}) + \nabla_{y}\mathcal{L}(x^{t}, y^{t}, u^{t})^{\top}\beta^{t}+ \nabla_{y}g(x^{t},
y^{t})^{\top}\left( \delta^{t} u^{t} - \gamma^{t}\right) }\\
\phi^\epsilon(\alpha_j^{t}, G_j(x^{t}))_{j=1, \ldots p}\\
\phi^\epsilon(\gamma_i^{t}, g_i(x^{t}, y^{t}))_{i=1, \ldots q}\\
\phi^\epsilon(u_i^{t}, \nabla_{y}g_{i}(x^{t}, y^{t})\beta^{t} -\delta_{i}^{t} g_{i}(x^{t}, y^{t}) )_{i=1, \ldots q}\\
\phi^\epsilon(\delta_i^{t}, -u_{i}^{t}g_i(x^{t}, y^{t})-t)_{i=1, \ldots q}\\
{\mathcal{L}(x^{t}, y^{t}, u^{t})}
\end{array}
    \right) = 0,
\end{align}
respectively. Here, $\Phi_{t}^{S_d}$ and $\Phi_{t}^{S_c}$ denote the detailed and compact form of the necessary optimality conditions from the Scholtes relaxation, respectively. 

%

We now illustrate the numerical performance of Algorithm \ref{algorithm 1} using MATLAB (R2022a) on some test problems from \cite{Mitsos2006}. We choose the default starting point $\zeta^{o} = (x^{o},y^{o},u^{o},\alpha^{o},\beta^{o},\delta^{o},\gamma^{o},\mu^{o})$ in the following way: the pair $(x^o,y^o)$ are generated randomly while we set $u^{o} = (|g_{1}(x^{o},y^{o})|, \dots, |g_{p}(x^{o},y^{o})|),$ $\alpha^{o}=(|G_{1}(x^{o})|, \dots, |G_{m}(x^{o})|)$, $\beta^{o}=y^{o}$, $u^{o}=\delta^{o}=\gamma^{o}=\mu^{o}$. For every subsequent iteration, the starting point for Step 1 is the solution obtained from the previous iteration.  The algorithm is stopped if the stopping criterion $\|\Phi_{t}^{S}(\zeta^{k})\|<10^{-6}$ is reached or the maximum iteration limit of $10^4$ is exceeded. In the case when this is not satisfied, we update the iterate of the algorithm by setting $t^{k+1} = \theta t^{k}$, where $\theta = 0.05$.
\begin{figure}[htp]
    \centering
 	\includegraphics[height=6.0cm]{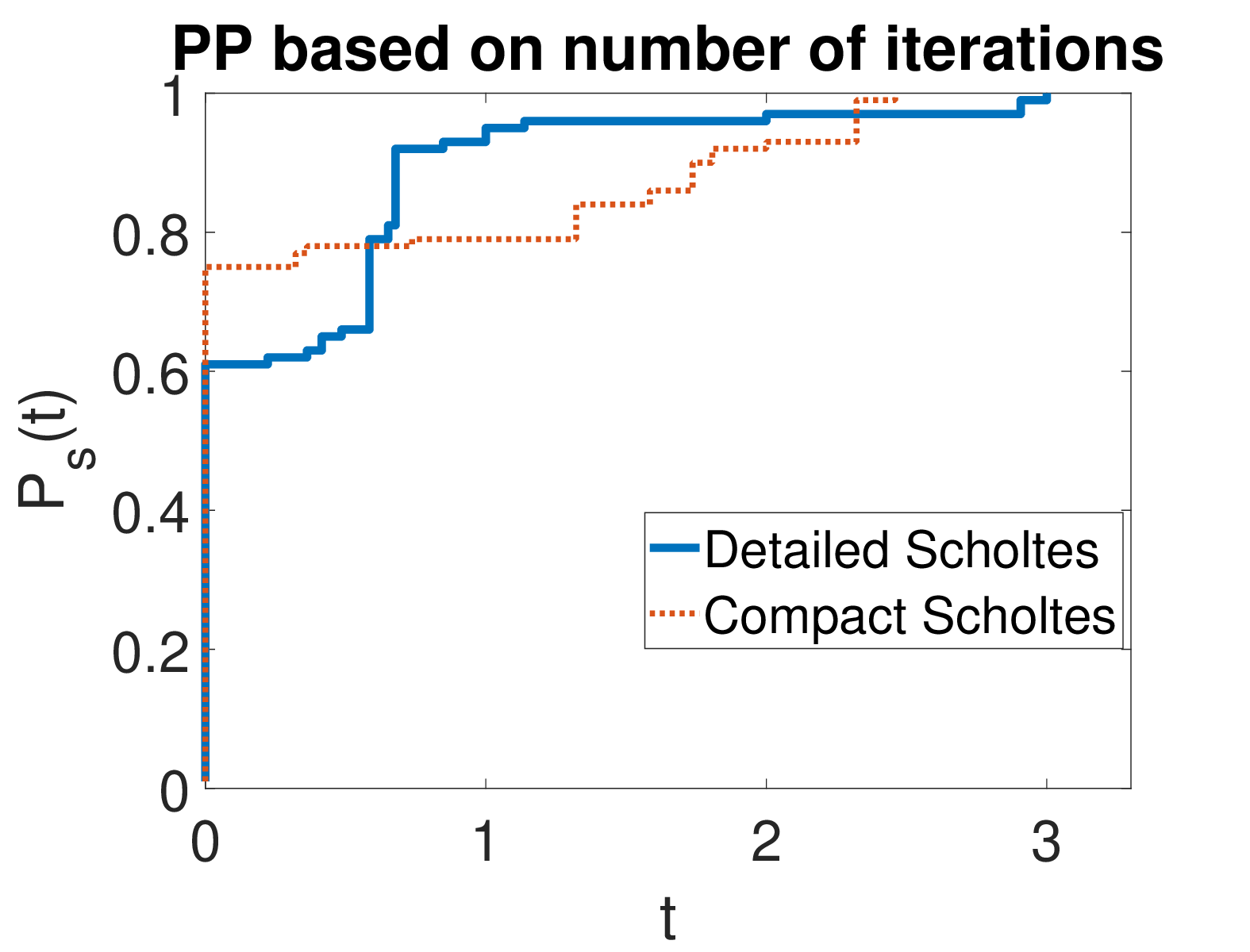}
       	\includegraphics[height=6.0cm]{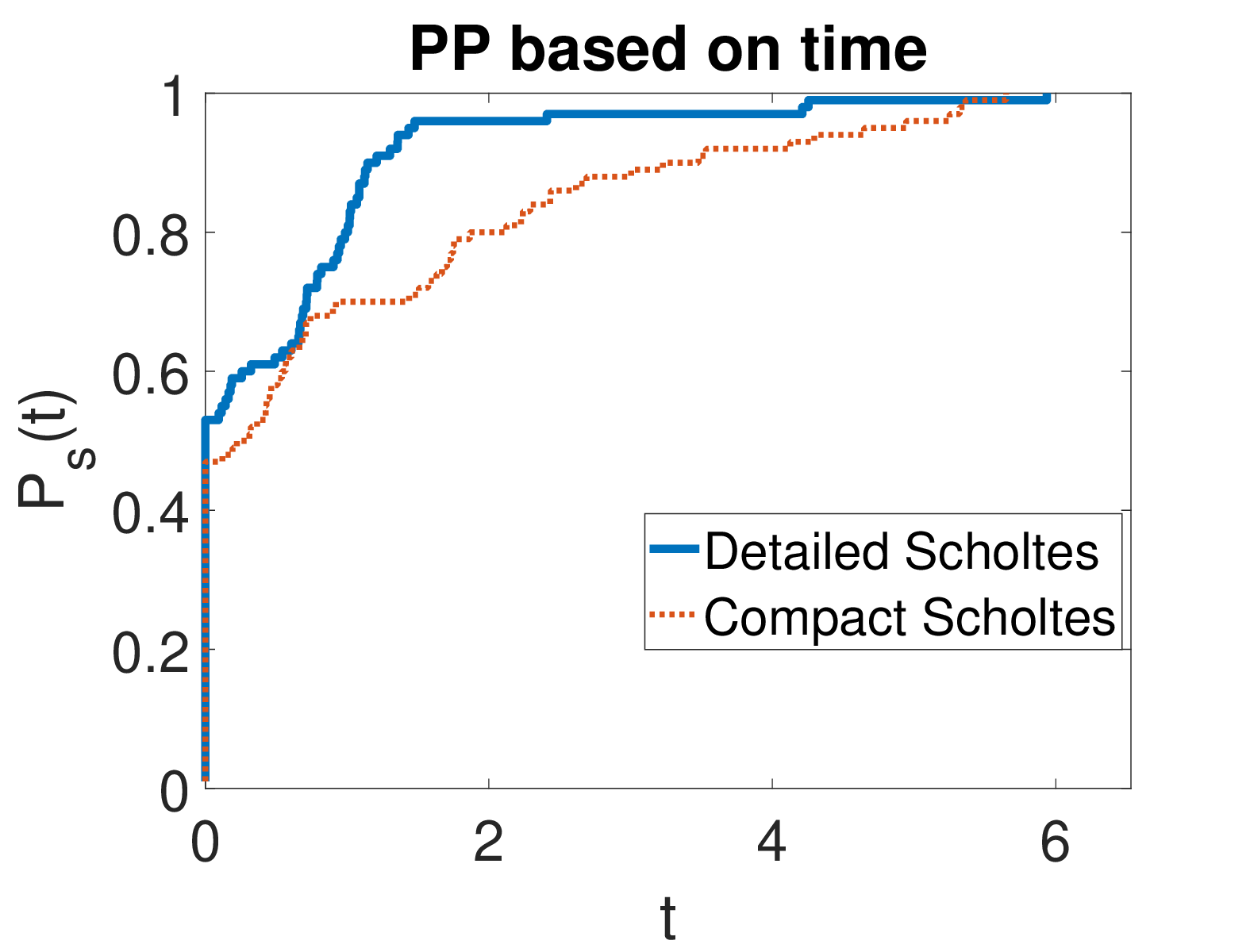}
        	\includegraphics[height=6.0cm]{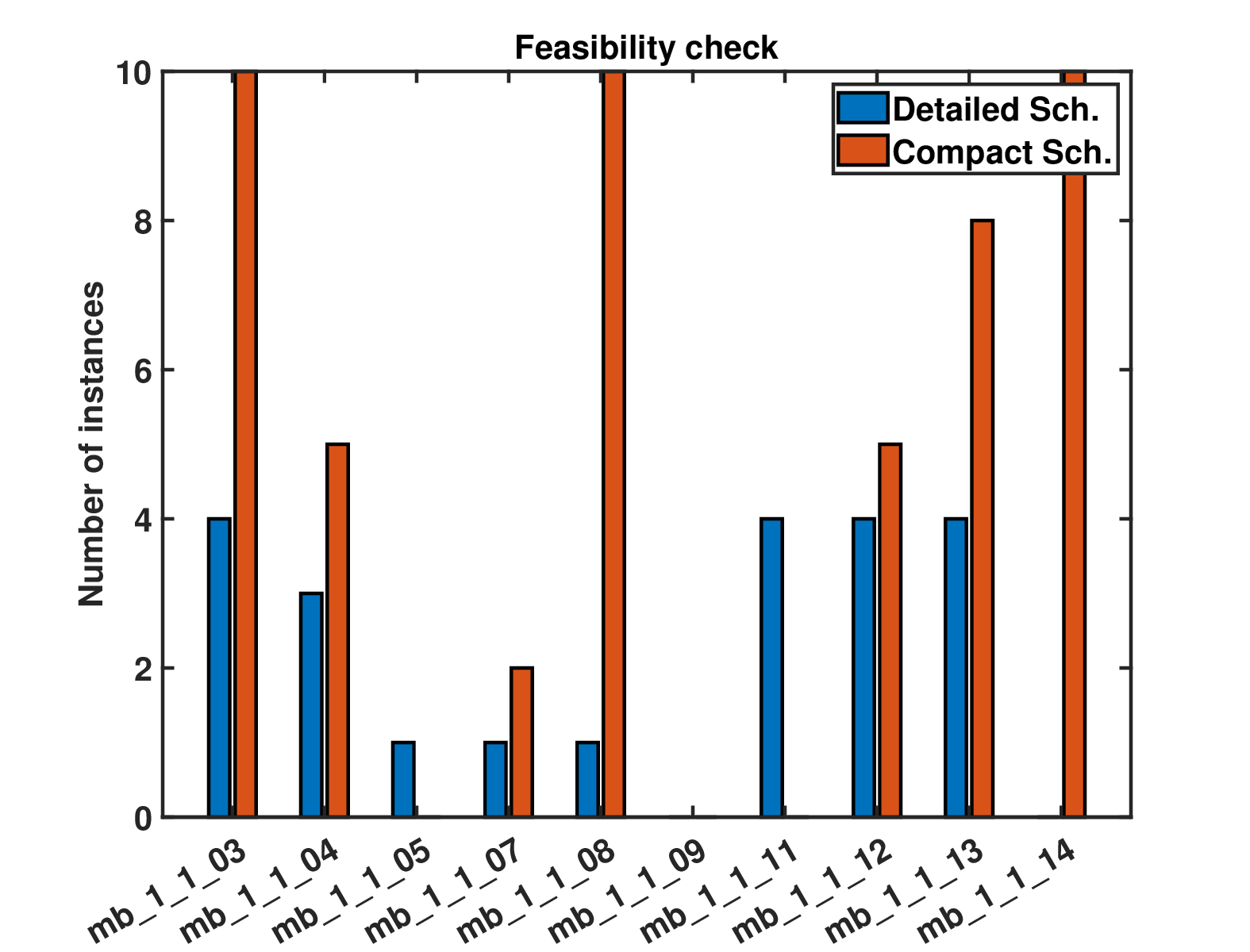}
      \includegraphics[height=6.0cm]{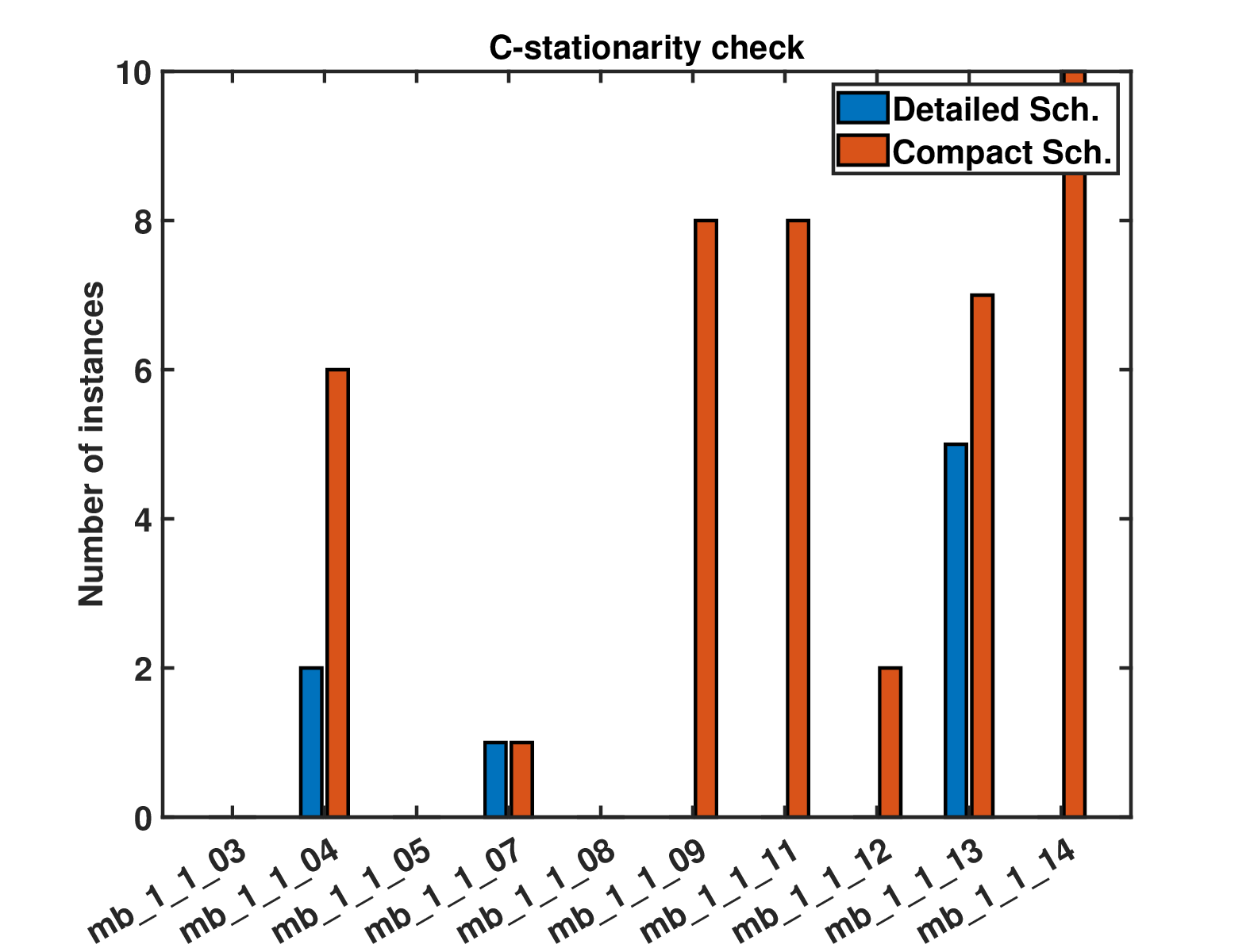}
      \caption{Performance profile of detailed versus compact form for with examples not satisfying convexity and regularity conditions (Experiment II).}\label{fig2}
\end{figure}

The performance of the algorithm is measured using the profile by Dolan and M\'{o}re \cite{DM}, which has widely been used to compare  numerical methods. We denote by $t_{i,s}$ the metric of comparison for a solver $s \in \mathcal{S}$ to solve problem $i \in \mathcal{I}$ and defined the performance ratio by
\begin{equation}
   { r_{i,s} = \frac{t_{i,s}}{\min\{t_{i,s'}: s' \in \mathcal{S}\}}, \; \; \forall s\in \mathcal{S},\;\, i \in \mathcal{I}, }\nonumber
\end{equation}
where $\mathcal{S}$ is the set of all solvers and $\mathcal{I}$ is the set of all problems used in the experiment. Note that $r_{i,s}$ is the ratio of the performance of solver $s \in \mathcal{S}$ to solve problem $i \in \mathcal{I}$ compared to the best performance of any other solver in $\mathcal{S}$ to solve $i$. The cumulative function $P_{s}: [1,\infty) \to [0,1]$ of the current performance profile index associated with solver $s$ is defined by
\begin{equation}
    P_{s}(t) := \frac{|\{i \in \mathcal{I} | r_{i,s} \leq t\}|}{|\mathcal{I}|}, \label{pp}
\end{equation}
where $|\mathcal{I}|$ is the cardinality of $\mathcal{I}.$  By \eqref{pp}, the performance profile index is counting the number of problems for which the performance ratio of the solver $s$ is better than $\tau$. Note that $\omega_{s}$ is a non-decreasing function, where $P_{s}(1) $ represents the fraction of problems for which solver $s \in \mathcal{S}$ shows the best performance.

From \cite{Mitsos2006}, we consider the problems whose optimal pessimistic solutions are described in \cite{WiesemannTsoukalasKleniatiRustem2013} that  consists of 18 test examples. Out of these examples, we identify three examples, namely, \texttt{mb\_1\_1\_06}, \texttt{mb\_1\_1\_10}, and \texttt{mb\_1\_1\_17}, where the lower-level problem is convex and has linear lower-level constraints, both w.r.t. $y$. This ensures that this set of examples satisfy the basic assumptions ensuring from problem \eqref{MPCC} is well-defined. Also, we identify another set of examples whose upper-level constraint function is independent from $y$, thus conforming with the type of pessimistic problem described in \eqref{PBP}, but with problem \eqref{MPCC} not well-defined, as the lower-level problem is either nonconvex or does not necessarily satisfy the MFCQ w.r.t. $y$. For each of  these 10 examples, \texttt{mb\_1\_1\_03}, \texttt{mb\_1\_1\_04}, \texttt{mb\_1\_1\_05}, \texttt{mb\_1\_1\_07}, \texttt{mb\_1\_1\_08}, \texttt{mb\_1\_1\_09}, \texttt{mb\_1\_1\_11}, \texttt{mb\_1\_1\_12}, \texttt{mb\_1\_1\_13}, and \texttt{mb\_1\_1\_14} of the form  \eqref{PBP}, we know a true optimal solution. We divide the numerical computations into Experiment I (with the three examples for which the KKT reformulation problem \eqref{MPCC} is well-defined) and  Experiment II for the remaining seven problems.   Each example is tested with 10 randomly generated starting points as described above and the result in each instance is recorded for comparison.
\begin{table}[h!]
    \centering
    \begin{tabular}{c|ccccc}
        Problem & status & $F_{opt}$ (known) & $F_{pes}$ (known) & $F(x,y)$ (detailed) & $F(x,y)$ (compact)  \\
        \hline
           \texttt{mb\_1\_1\_03} &N & 0.5 & 0.5 & 0.5 (6) & 0.5 (8)   \\
   \texttt{mb\_1\_1\_04} &N & -0.8 & 0.5 & 0.5 (8) & 0.5 (7)  \\
     \texttt{mb\_1\_1\_05} & N & 0 & 0 & 0.02 (4) & 0.02 (6)  \\
      \texttt{mb\_1\_1\_06} &Y & -1 & 0 & 0 (6) & 0 (6)   \\
\texttt{mb\_1\_1\_07} & N & 0.25 & 0.2507 & 0.062 (10) & 0.063 (10)  \\
   \texttt{mb\_1\_1\_08} &N & 0 & 0 & -1.0217 (10) & -1.0161 (9)  \\
     \texttt{mb\_1\_1\_09} &N & -2 & -2 & -2 (8) & -2 (8) \\
      \texttt{mb\_1\_1\_10} & Y & 0.1875 & 0.1875 & 0.1875 (8) & 0.1875 (8)  \\
\texttt{mb\_1\_1\_11} &N & 0.25 & 0.251 & -0.03 (5) & -0.03 (5) \\
   \texttt{mb\_1\_1\_12} &N & -0.258 & 0 & -0.2581 (7) & -0.258 (9)  \\
     \texttt{mb\_1\_1\_13} &N & 0.3125 & 0.3135 & 0.3095 (10) & 0.3012 (10) \\
  \texttt{mb\_1\_1\_14}& N & 0.2095 & 0.2095 & 0.2095 (5) & 0.2095 (5) \\
\texttt{mb\_1\_1\_17} &Y & -1.7550  & -0.2929 & -1.7550 (7) / -0.2929 (2) & -1.7550 (7) /-0.2929 (2)  \\
\hline
    \end{tabular}
    \vspace{4pt}
    \caption{Comparison of objective function value $F(x,y)$ obtained by algorithm with the known $F(x,y)$ in \cite{WiesemannTsoukalasKleniatiRustem2013}. The number in the parenthesis indicates the number of instances the optimal value is obtained out of 10 instances.}
    \label{tab:my_label}
\end{table}

In Table \ref{tab:my_label}, the first column contains the names of the example as labelled in \cite{Mitsos2006} and \cite{WiesemannTsoukalasKleniatiRustem2013}. The second column indicates if the problem satisfies both lower-level convexity and linearity of the constraints  (Y) or otherwise (N). The column $F_{opt}$ (known) contains the known upper-level objective function value of the optimal solution for the optimistic version of the problem as obtained in   \cite{WiesemannTsoukalasKleniatiRustem2013}, column $F_{pes}$ (known) contains the upper-level objective function value of the optimal solution for the pessimistic version of the problem as obtained in \cite{WiesemannTsoukalasKleniatiRustem2013}. The column $F(x,y)$ (detailed) is the obtained/computed upper-level objective function value by the Scholtes Algorithm with \eqref{Detailed} and the column $F(x,y)$ (compact) is the computed upper-level objective function value by the Scholtes Algorithm using \eqref{Compact}. We also compare the performance of the methods in terms of the number of outer iterations, time of execution, number of inner iterations (i.e., the number of iterations for MATLAB's \texttt{fsolve} to solve the system), the experimental order of convergence (EOC), and the number of instances for which the algorithm obtains a C-stationary point. 
Note that the EOC is defined by
$$EOC  = \max \left\{ \frac{\log\|\Phi_{t^k}^{\mathcal{S}}(\zeta^{K-1})\|}{\log\|\Phi_{t_k}^{\mathcal{S}}(\zeta^{K-2})\|}, \frac{\log\|\Phi_{t_k}^{\mathcal{S}}(\zeta^{K})\|}{\log\|\Phi_{t_k}^{\mathcal{S}}(\zeta^{K-1})\|} \right\}.$$
In order to check the feasibility of the point obtained by the algorithm, from \eqref{KKT system}, we checked that the point obtained by the algorithm satisfied $u_{j} \geq 0, -g_{j}(x,y) \geq 0,$ $j=1,\dots,q$ using $10^{-4}$ as tolerance level, noting that $\mathcal{L}(x,y,u)$ is already included in the systems \eqref{Detailed} and \eqref{Compact}.

\begin{center}
\begin{table}[h]
    \centering
    \begin{tabular}{ccccc}
         \toprule
          & \multicolumn{2}{c}{Experiment I}
          & \multicolumn{2}{c}{Experiment II} \\
          \cline{2-3} \cline{4-5}  \\
          & Detailed & Compact &  Detailed & Compact  \\
   \midrule
   Av. outer iter & 6.9 & 4.7 & 6.12 & 5.56  \\
   Av. time & 0.64 & 0.22 & 0.56 & 0.49 \\
   Av inner iter & 505.23 & 189.27 & 370.9 & 368.4  \\
   Av. accuracy & 0.40 & 0.43 &  0.48 & 0.48 \\
   C-stationarity (\%) & 20 & 43.3 & 8 & 40  \\
   Feasibility (\%) & 43.3 & 83.3 & 18 & 51 \\
   EOC $\leq$ 1  (\%)  & 73.3 & 96.67 & 85 & 86  \\
   EOC  $>$ 1  (\%) & 26.7 & 3.33 & 15 & 14 \\
   \bottomrule
    \end{tabular}
    \caption{Computational result for Scholtes relaxation }
    \label{tab:Scholtes}
\end{table}
\end{center}

{{Based on our numerical experiments, we observed that the compact form of the reformulation of the Scholtes method demonstrates better numerical performance compared to its detailed form. Figure \ref{fig1} illustrates the performance profile of the first experiment, using the number of outer iterations and execution time. Additionally, the bar chart in Figure \ref{fig2} shows the performance of both detailed and compact formulations regarding feasibility checks and C-stationarity checks for each example. Specifically, the compact Scholtes method achieved the least outer iterations numbers in approximately 90\% of the instances while the detailed Scholtes method has the least outer iteration numbers in approximately 10\% instances. Similarly, the compact Scholtes method has the least execution time in approximately 60\% instances while the detailed Scholtes form has the least execution time in approximately 40\% instances. This behavior is corroborated with the average values presented in Table \ref{tab:Scholtes}, where the compact form consistently exhibits the lowest numbers of iterations and execution time.

Furthermore, we can compare the solutions obtained by both detailed and compact Scholtes methods with the solutions reported in \cite{WiesemannTsoukalasKleniatiRustem2013} (which are referred to as the known solution) for each of the problems in the experiments. The optimal function value for the pessimistic problem of mb\_1\_1\_06 as reported in \cite{WiesemannTsoukalasKleniatiRustem2013} is 0 which match the results obtained by both the detailed and compact Scholtes forms in 6 out of 10 instances. Also, the known optimal functional value for both optimistic and pessimistic for problem mb\_1\_1\_10 is 0.1875. The detailed and compact forms of the Scholtes method both obtained this value in 8 out of 10 instances. Additionally, 5 out of 10 instances satisfies the C-stationarity check.  For problem mb\_1\_1\_17, the known optimal objective value reported in \cite{WiesemannTsoukalasKleniatiRustem2013} is -1.7550 for the optimistic case of the problem and -0.2929 for the pessimistic case of the problem. In our experiment, both the detailed and compact Scholtes forms obtained the optimistic solution (-1.7550) in 7 out of 10 instances and the pessimistic value (-0.2929) in 2 out of 10 instances. However, none of the 10 instances satisfied the C-stationarity check in the experiment. Similarly in experiment II, the compact form of the Scholtes method demonstrated better perfomance compared to the detailed form. Specifically, the compact system attained the least number of iterations in approximately 75\% of the test problems, whereas the detailed form managed this in about 60\% of the test problems. Note that this result include the instances where both methods have the same number of iterations. Although the detailed form had a slightly better performance in terms of execution time, with 52\% compared to 48\% for the compact form, the average execution time was smaller for the compact Scholtes form (0.49) than the detailed Scholtes form (0.56). This indicates that the values of the execution time at instances where the detailed form has an advantage are very large compared to the values of the execution time at instances where the compact form has an advantage.

We also assessed whether the solutions computed by the Scholtes methods are the same with the known solutions in \cite{WiesemannTsoukalasKleniatiRustem2013} for the problems. For problem mb\_1\_1\_03, the known optimal solution for both the optimistic and pessimistic cases of the problem is 0.5. This value was obtained by the detailed form in 6 out of 4 instances and by the compact form in 8 out of 10 instances. Additionally, the feasibility condition was met in 4 instances by the detailed form and in all 10 instances by the compact form. However, the C-stationarity condition was not met in any of the 10 instances by either form. More so, neither the detailed nor the compact form was able to compute the optimal function value for the problems mb\_1\_1\_05, mb\_1\_1\_07, mb\_1\_1\_08, mb\_1\_1\_11, and mb\_1\_1\_13 in the 10 instances tested. For the problem mb\_1\_1\_12, the solution for the optimistic problem was obtained in 7 instances by the detailed form and 9 instances out of 10 by the compact form, but the solution for the pessimistic case of the problem was not obtained. On the other hand, both methods performed very well for the problems mb\_1\_1\_09 and mb\_1\_1\_14 obtaining both the optimistic and pessimistic cases of the problems in 8 and 5 instances, respectively. The compact form of the Scholtes method also excelled in satisfying the C-stationarity condition more frequently than the detailed form in each example. Overall, these results underscore the numerical advantages of the compact form of the Scholtes method over the detailed form, particularly in terms of iteration count, execution time, ability to meet feasibility and C-stationarity conditions across a range of test problems. It also support the optimal solution for the test problems as reported in \cite{WiesemannTsoukalasKleniatiRustem2013}.}}

\section{Conclusion and topics for future work}\label{sec:conclusions}
We have considered the KKT reformulation of the pessimistic bilevel optimization problem \eqref{MPCC} and built a theoretical framework to solve it by iteratively computing a sequence of solutions of the relaxation problem (\ref{RMPCCt}) for $t:=t^k$ as $t^k\downarrow 0$. Then, considering the fact that problem \eqref{PBP} is globally/locally equivalent to problem (\ref{MPCC}), under mild assumptions, solving the corresponding class of the pessimistic bilevel program boils down to solving a special class of min-max optimization problems. Solving min-max optimization problems of the type in (\ref{RMPCCt})  is not an easy task, especially as the inner constraints there are parameterized by $(y, u)$ even as we fix $t>0$; a detailed analysis of the complexity of solving such problems is conducted in the recent paper \cite{Daskalakis}. Hence, we also pay attention to the convergence of the Scholtes relaxation in the case where the stationary points of problem \eqref{RMPCCt} are computed and show that the limit of the corresponding sequence is a C-stationary points problem \eqref{MPCC} under suitable assumptions.

Considering techniques from \cite{FischerZemkohoZhou,FliegeTinZemkoho,TinZemkohoLevenberg,ZemkohoZhouTheoretical}, for example, our numerical experiments to compute C-stationary points for \eqref{MPCC} reveal that the compact form of the Scholtes relaxation leads to more practical solutions than the detailed form. It in fact comes out that the compact form of the method takes lesser number of iterations and time of execution to obtain a solution than the detailed form of the Scholtes relaxation algorithm. Moreover, the dimension of the compact form is smaller than that of the detailed form because of the substitution of variables.  In the future, instead of using MATLAB's \texttt{fsolve} function to solve the system of equations, we can explore the use of the semismooth Newton method to solve the system of equations.  Another way to extend our discussion from the present paper is to use the concept of Newton-differentiability  \cite{Harder} instead of the semismooth as the underlying tool of generalized differentiation in the algorithm.

Sufficient conditions ensuring that a point satisfying \eqref{Er1-f1}--\eqref{Er1-f2} and  \eqref{Er1-f3}--\eqref{Er5} could lead to a point that satisfies condition \eqref{Er0} will also be studied in the future.

\section*{Acknowledgments}
The work of the first and third authors is supported by the University of Oran 1 under the PRFU grant with reference 48/S.D.R.F/2023, while the second and fourth authors are partly funded by the EPSRC project with reference EP/V049038/1.
The authors would also like to thank an anonymous referee for carefully reading the paper, and for their thoughtful comments, which  helped in the improvement of the previous version of the paper.

\section*{Data availability statement}
The test problems used for the experiments in this paper can be found in \cite{Mitsos2006,WiesemannTsoukalasKleniatiRustem2013,ZhouZemkohoTin2018}. As for the codes used for the experiments, they are based on MATLAB’s fsolve and can be requested by an email to the authors.

\section*{Conflict of interest statement} The authors have no conflicts of interest to declare.


\begin{thebibliography}{99}
\bibitem{Ar} Arag\'{o}n Artacho FJ,  Mordukhovich BS (2011) Enhanced metric regularity and Lipschitzian
 properties of variational systems. \emph{J. Global Optim.} 50(1):145-167.
	
\bibitem{Au} Aussel D, Svensson A   (2019) Is Pessimistic bilevel programming a
	special case of a mathematical program with complementarity constraints?
	\emph{J. Optim. Theory Appl.} 181:504-520.
 \bibitem{Au2}  {Aussel D,  Svensson A (2019) Towards tractable constraint qualifications for parametric optimisation problems and applications to generalised Nash games. \emph{J. Optim. Theory Appl.} 182:404–416.}

\bibitem{Clarke1990} Clarke FH (1990) \textit{Optimization and nonsmooth analysis}. Society for industrial and Applied Mathematics.
	
\bibitem{KlatteEtAlBook1982} Bank B, Guddat J,  Klatte D, Kummer B, Tammer K (1982)
\emph{Non-linear Parametric Optimization}. Akademie-Verlag, Berlin.
	
\bibitem{CervinkaMatonaha} \v{C}ervinka  M, Matonoha C, Outrata JV (2013)  On the computation of
	relaxed pessimistic solutions to MPECs. \emph{Optim. Methods Softw.} 28(1):186-206.
	
\bibitem{Daskalakis} Daskalakis C, Skoulakis S, Zampetakis M (2021) The complexity of constrained min-max optimization. In Proceedings of the 53rd Annual ACM SIGACT Symposium on Theory of Computing, 1466-1478.
	
\bibitem{DempeBook} Dempe S (2002) \emph{Foundations of Bilevel Programming}. Kluwer Academic Publishers, London.
\bibitem{Dempe}	 Dempe S, Dutta J,  Mordukhovich BS (2007) New necessary
optimality conditions in optimistic bilevel programming. \emph{Optimization} 56:5-6:577-604.

\bibitem{DempeDutta2012} Dempe S, Dutta J (2012)  Is bilevel programming a special case of a mathematical program with complementarity constraints? \emph{Math. Program.} 131(1):37-48.
	
\bibitem{D2} Dempe S, Mordukhovich BS, Zemkoho AB (2014) Necessary optimality conditions in pessimistic bilevel programming. \emph{Optimization}  63(4):505-533.
	
\bibitem{D3} Dempe S, Mordukhovich BS, Zemkoho AB (2012) Sensitivity
	analysis for two-level value functions with applications to bilevel
	programming. \emph{SIAM J. Optim.} 22(4):1309-1343.
	
\bibitem{DMZns2019} Dempe S, Mordukhovich BS, Zemkoho AB (2019) Two-level
	value function approach to non-smooth optimistic and pessimistic bilevel programs. \emph{Optimization} 68(2-3):433--455.
	
\bibitem{DempeZemkohoKKTRefNonsmooth}  Dempe S, Zemkoho AB (2014) KKT reformulation and necessary conditions for optimality in nonsmooth bilevel optimization. \emph{SIAM J. Optim.} 24(4):1639-1669.

\bibitem{DempeZemkohoKKTRef}  Dempe S, Zemkoho AB (2012) On the Karush–Kuhn–Tucker reformulation of the bilevel optimization problem. \emph{Nonlinear Anal.} 75(3):1202-1218.
	
\bibitem{DempeZemkohoBook}Dempe S, Zemkoho AB,  Eds. (2020)  \textit{Bilevel Optimization: Advances and Next Challenges}. Springer, Cham.

\bibitem{DM} Dolan ED, Mor\'{e} JJ (2002) Benchmarking optimization software with performance profiles \emph{Math. Program.} 91:201--213.
	
\bibitem{Du} Durea M, Strugariu R  (2012) Openness stability and implicit
	multifunction theorems: Applications to variational systems. \emph{Nonlinear Anal.}  75(3):1246-1259.
	
\bibitem{FischerSpecial1992} Fischer A (1992) A special newton-type optimization method. \emph{Optimization} 24(3-4):269-284.
	
	\bibitem{FischerZemkohoZhou}  Fischer A, Zemkoho AB, Zhou S (2021)
	Semismooth Newton-type method for bilevel optimization: Global convergence and
	extensive numerical experiments.  \emph{Optim. Methods Softw.} 37(5):1770--1804.
	
	\bibitem{FliegeTinZemkoho}  Fliege J, Tin  A, Zemkoho AB (2021)
	Gauss Newton-type methods for bilevel optimization. \emph{Comput. Optim. Appl.}  78:793--824.

 \bibitem{Harder} Harder F,  Mehlitz P, Wachsmuth G (2021) Reformulation of the M-stationarity conditions as a system of discontinuous equations and its solution by a semismooth Newton method. \emph{SIAM J. Optim.} 31(2):1459--1488.	
	
	\bibitem{HoheiselEtAlComparison2013} Hoheisel T, Kanzow C, Schwartz A (2013)
	Theoretical and numerical comparison of relaxation methods for mathematical
	programs with complementarity constraints. \emph{Math. Program. Ser. A} 137:257--288.

\bibitem{Kien} Kien BT  (2005) On the lower semicontinuity of optimal solution
sets. \emph{Optimization} 54(2):123-130.

\bibitem{LamparielloEtal2019} Lampariello L, Sagratella S, Stein  O (2019)  The standard pessimistic bilevel problem.
\emph{SIAM J. Optim.} 29(2):1634--1656.
	
	\bibitem{LF} Lin GH, Fukushima M (2005) A modified relaxation scheme for
	mathematical programs with complementarity contraints. \emph{Ann. Oper. Res.} 133:63--84.
	
	\bibitem{LoridanMorgan1989}
Loridan  P, Morgan J (1989)  $\varepsilon$--Regularized two-level optimization problems: Approximation and existence results. Dolecki S, Ed. \emph{Optimization Lecture Notes in Mathematics} (Springer, Berlin, Heidelberg) 99--113.
	
	\bibitem{LoridanMorgan1996} Loridan  P, Morgan J (1996)  Weak via strong Stackelberg problem: new results. \emph{J. Global Optim.} 8(3):263--287.

 	\bibitem{Mitsos2006} Mitsos A, Barton PI (2006) \textit{ A test set for bilevel programs. Technical Report}, Massachusetts Institute of Technology.
	
\bibitem{Mork}Mordukhovich BS (1993) Complete characterization of openness,
	meric regularity, and Lipschitzian properties of multifunctions. \emph{Trans. Amer. Math. Soc.} 340(1):1-35.

\bibitem{Mor} Mordukhovich BS (1980) Metric approximations and necessary
	optimality conditions for general classes of extremal problems. \emph{Dokl. Akad. Nauk SSSR} 254(5):1072--1076.


\bibitem{BS2}Mordukhovich BS (2006)  \textit{Variational Analysis and Generalized Differentiation, I: Basic Theory}.
Springer Verlag, Berlin.

\bibitem{BS1}Mordukhovich BS (2006) \textit{Variational Analysis and Generalized Differentiation, II: Applications}.
Springer Verlag, Berlin.
	
	\bibitem{B1S} Mordukhovich BS, Nam NM (2005) Variational stability and
	marginal function via generalized differentiation. \emph{Math. Oper. Res.} 30:800--816.
	
	\bibitem{He} Mordukhovich BS, Nam  NM, Phan HM  (2012) Variational
	analysis of marginal functions with applications to bilevel programming. \emph{J. Optim. Theory  Appl.} 152:557-586.

\bibitem{BS}Mordukhovich BS, Nam MN, Yen ND (2009) Subgradients of 	marginal functions in parametrics mathematical programming. \emph{Math. Program.} 116:369-396.
		
	\bibitem{OutrataBook} Outrata J, Kocvara M, Zowe J (2013)   \emph{Nonsmooth Approach to Optimization Problems with Equilibrium Constraints: Theory, Applications and Numerical Results}. Kluwer Academic Publishers, Dordrecht.
	
	\bibitem{Qi} Qi L, Wei Z (2000) On the constraint positive linear dependence
	condition and its application to SQP methods. \emph{SIAM J. Optim.} 10(4):963-981.


{{\bibitem{Robinson} Robinson SM (1976) Stability theory for systems of inequalities, Part II:
Differentiable  nonlinear systems  systems. \emph{SIAM J. Numer. Anal.} 13(4):}}{{497--513.}}

{{\bibitem{Robinson1} Robinson SM (1982) Generalized equations and their solutions, Part II: Applications to  nonlinear programming. \textit{Math. Program.  Study} 19:200-221.}}


	\bibitem{RTW} Rockafellar  RT, Wets RJ (1998)  \emph{Variational Analysis}. Springer-Verlag, Berlin, Heidelberg.
	
	\bibitem{Scholtes2001} Scholtes S (2001) Convergence properties of a regularisation scheme for
	mathematical programs with complementarity constraints. \emph{SIAM J. Optim.} 11:918--936.
	
\bibitem{TinZemkohoLevenberg} Tin A, Zemkoho  AB (2021)
	Levenberg-Marquardt method and partial exact penalty parameter selection in
	bilevel optimization.  \emph{Optim. Eng.} 24:1343--1385.
	
\bibitem{WiesemannTsoukalasKleniatiRustem2013} Wiesemann  W, Tsoukalas  A,  Kleniati PM,  Rustem B (2013)   Pessimistic bilevel optimization. \emph{SIAM J. Optim.} 23(1):353--380.
	
\bibitem{Ye} Ye JJ (2000) Constraint qualifications and necessary optimality
	conditions for optimization problems with variational inequality constraints,
	\emph{SIAM J. Optim.} 10(4):943--962.

\bibitem{Zemkoho} Zemkoho AB (2014) A simple approach to optimality conditions in
	minmax programming. \emph{Optimization} 63(3):385--401.
	
\bibitem{ZemkohoSetValPaper1} Zemkoho  AB (2016) Solving ill-posed bilevel programs. \emph{Set-Valued Var. Anal.}  24(3):423--48.
	
\bibitem{ZemkohoZhouTheoretical} Zemkoho AB, Zhou S (2021)  Theoretical and numerical comparison of the Karush-Kuhn-Tucker and
	value function reformulations in bilevel optimization. \emph{Comp. Optim. Appl.}  78(2):625--674.
	
\bibitem{Zhao} Zhao J (1997) The lower semicontinuity of optimal solution sets. \emph{J. Math. Anal. Appl.} 207:240--254.

\bibitem{ZhouZemkohoTin2018} Zhou S, Zemkoho  AB, Tin A  (2020)  BOLIB:
	Bilevel Optimization LIBrary of test problems. Dempe S, Zemkoho AB, Eds. \emph{Bilevel Optimization: Advances and Next Challenges}
(Springer, Cham)  563-580.
\end{thebibliography}

\section{Appendices}
\subsection{Proof of Proposition \ref{lim}}\label{Proof of Proposition lim}
\noindent  If such a neighborhood does not exist, then we can find a sequence of $(x^{k}, y^{k}, u^{k})_k$, which converges to $(\bar{x}, \bar{y}, \bar{u})$ as $k\rightarrow\infty$  as $k\rightarrow\infty$ with $(y^{k},u^{k})\in \mathcal{D}(x^{k})$ and such that condition $(A_{1}^{m})$ or $(A_{2}^{m})$ does not hold at $(x^{k}, y^{k}, u^{k})$. Assuming that $(A_{1}^{m})$ is not satisfied, there exists a sequence
$(\beta^{k},\gamma^{k})_k$ with $\left\Vert
(\beta^{k},\gamma^{k})\right\Vert =1$ such that we have the inclusion $(\beta^{k},\gamma^{k})\in
\Lambda^{em}(x^{k},y^{k},u^{k},0)$;  i.e.,
\begin{equation}
\left\{
\begin{array}
[c]{l}%
\nabla_{x,y}\mathcal{L}(x^{k},y^{k},u^{k})^{\top}\beta^{k}+\nabla g(x^{k},y^{k})^{\top}\gamma^{k}=0,\medskip \\
\nabla_{y}g_{\nu^{k}}(x^{k},y^{k})\beta^{k}=0, \;\; \gamma_{\eta^{k}}^{k}=0,\medskip \\
\left(\gamma_{i}^{k}>0\wedge\nabla_{y}g_{i}(x^{k},y^{k})\beta^{k}>0\right)\vee\gamma_{i}^{k}\nabla_{y}g_{i}(x^{k},y^{k})\beta^{k}=0\; \mbox{ for all } i\in\theta^{k},
\end{array}
\right.  \label{*}
\end{equation}
where $\theta^{k}:=\theta(x^{k},y^{k},u^{k}),$ $\eta^{k}:=\eta(x^{k}
,y^{k},u^{k})$, and $\nu^{k}:=\nu(x^{k},y^{k},u^{k})$. Clearly,
\begin{equation}\label{te}
\theta^{k}\subset\theta,\text{ }\eta^{k}\subset \theta\cup\eta, \text{ and }\nu^{k}\subset
\theta\cup\nu \;\, \mbox{ for all }\;\, k\;\, \mbox{ sufficiently large.}
\end{equation}
As $i\in\eta^{k}$ for all $i\in\eta$, $i\in \nu^{k}$
for all $i\in \nu$ whenever $k$ is sufficiently large, and by setting
\begin{equation}
\bar{\gamma}_{i}^{k}:=\left\{
\begin{array}
[c]{l}%
\gamma_{i}^{k}\text{\ \ if\ \ \ }i\in\theta\cup \nu,\medskip\\
0\text{ \ \ \ otherwise},
\end{array}
\right.  \label{ni}
\end{equation}
the system of equations (\ref{*}) becomes the following one:
\begin{equation}
\left\{
\begin{array}
[c]{l}
\nabla_{x,y}\mathcal{L}(x^{k},y^{k},u^{k})^{\top} \beta^{k}+\nabla g(x^{k},y^{k})^{\top}\bar{\gamma}^{k}=0,\medskip \\
\nabla_{y}g_{\nu}(x^{k},y^{k})\beta^{k}=0,\text{ \ \ }\bar{\gamma}_{\eta}
^{k}=0,\medskip \\
(\bar{\gamma}_{i}^{k}>0\wedge\nabla_{y}g_{i}(x^{k},y^{k})\beta^{k}
>0)\vee\bar{\gamma}_{i}^{k}(\nabla_{y}g_{i}(x^{k},y^{k})\beta^{k}
)=0\; \mbox{ for all } i\in\theta.
\end{array}
\right.  \label{***}
\end{equation}
Now without loss of generality, we may assume that the sequence $(\beta
^{k},\bar{\gamma}^{k})$ converges to $(\beta,\bar{\gamma})$ as
$k\rightarrow+\infty$ with $\left\Vert (\beta,\bar{\gamma})\right\Vert
=1.$ Consequently, as all functions in (\ref{***}) are continuous, we get
\[
\left\{
\begin{array}
[c]{l}%
\nabla_{x,y}\mathcal{L}(\bar{x},\bar{y},\bar{u})^\top \beta+\nabla
g(\bar{x},\bar{y})^\top\bar{\gamma}=0,\\
\nabla_{y}g_{\nu}(\bar{x},\bar{y})\beta=0,\text{ \ }\bar{\gamma}_{\eta}=0,\\
(\bar{\gamma}_{i}\geq0\wedge\nabla_{y}g_{i}(\bar{x},\bar{y})\beta
\geq0)\vee\bar{\gamma}_{i}(\nabla_{y}g_{i}(\bar{x},\bar{y})\beta
)=0\; \mbox{ for all } i\in\theta.
\end{array}
\right.
\]
Now if for some $i\in\theta,$ $\bar{\gamma}_{i}(\nabla_{y}g_{i}(\bar{x}
,\bar{y})\beta)\neq0$ then $\bar{\gamma}_{i}>0$ and $\nabla_{y}
g_{i}(\bar{x},\bar{y})\beta>0$ and so
\[
(\bar{\gamma}_{i}>0\wedge\nabla_{y}g_{i}(\bar{x},\bar{y})\beta
>0)\vee\bar{\gamma}_{i}(\nabla_{y}g_{i}(\bar{x},\bar{y})\beta
)=0\; \mbox{ for all } i\in\theta.
\]
Hence $(A_{1}^{m})$ is not satisfied at $(\bar{x},\bar{y},\bar{u})$ since
$0\neq(\beta,\bar{\gamma})\in\Lambda^{em}(\bar{x},\bar{y},\bar{u},0).$

Similarly, if $(A_{2}^{m})$ is not satisfied, we can find a nonvanishing
sequence $(\beta^{k},\gamma^{k})_k$ such that
\[
\nabla_{x}\mathcal{L}(x^{k},y^{k},u^{k})^\top\beta^{k}+\nabla_{x}g(x^{k},y^{k})^\top\gamma^{k}\neq0
\]
and $(\beta^{k},\gamma^{k})\in\Lambda_{y}^{em}(x^{k},y^{k},u^{k},0)$ so that,
\begin{equation*}
\left\{
\begin{array}
[c]{l}%
\nabla_{y}\mathcal{L}(x^{k},y^{k},u^{k})^{\top}\beta^{k}+\nabla_{y} g(x^{k},y^{k})^{\top}\gamma^{k}=0,\medskip \\
\nabla_{y}g_{v^{k}}(x^{k},y^{k})\beta^{k}=0, \;\; \gamma_{\eta^{k}}^{k}=0,\medskip \\
\left(\gamma_{i}^{k}>0\wedge\nabla_{y}g_{i}(x^{k},y^{k})\beta^{k}>0\right)\vee\gamma_{i}^{k}\nabla_{y}g_{i}(x^{k},y^{k})\beta^{k}=0\; \mbox{ for all } i\in\theta^{k}.
\end{array}
\right.  \label{**}
\end{equation*}

Taking the sequence $(\bar{\gamma}^{k})_k$ as in (\ref{ni}), we obtain
\begin{equation*}
\left\{
\begin{array}
[c]{l}%
\nabla_{y}\mathcal{L}(x^{k},y^{k},u^{k})^\top\beta^{k}+\nabla_{y}g(x^{k},y^{k})^\top\bar{\gamma}^{k}=0,\medskip\\
\nabla_{y}g_{\nu}(x^{k},y^{k})\beta^{k}=0,\text{ \ \ }\bar{\gamma}_{\eta}^{k}=0,\medskip\\
(\bar{\gamma}_{i}^{k}>0\wedge\nabla_{y}g_{i}(x^{k},y^{k})\beta^{k}>0)\vee\bar{\gamma}_{i}^{k}(\nabla_{y}g_{i}(x^{k},y^{k})\beta^{k}
)=0\; \mbox{ for all } i\in\theta,
\end{array}
\right. \label{**}%
\end{equation*}
and assuming that the sequence $(\beta^{k},\bar{\gamma}^{k})_k$ (up to a
subsequence) converges to the vector $(\beta,\bar{\gamma})$ as $k\rightarrow
+\infty$ with $\left\Vert (\beta,\bar{\gamma})\right\Vert =1,$ we clearly arrive at
\[
\left\{
\begin{array}
[c]{l}%
\nabla_{y}\mathcal{L}(\bar{x},\bar{y},\bar{u})^\top\beta+\nabla_{y}g(\bar{x},\bar{y})^\top\bar{\gamma}=0,\\
\nabla_{y}g_{\nu}(\bar{x},\bar{y})\beta=0,\text{ \ }\bar{\gamma}_{\eta}=0,\\
(\bar{\gamma}_{i}\geq0\wedge\nabla_{y}g_{i}(\bar{x},\bar{y})\beta\geq0)\vee\bar{\gamma}_{i}(\nabla_{y}g_{i}(\bar{x},\bar{y})\beta
)=0\; \mbox{ for all } i\in\theta,
\end{array}
\right.
\]
so that $(\beta,\bar{\gamma})\in\Lambda_{y}^{em}(\bar{x},\bar{y},\bar
{u},0)$; this implies, by $(A_{2}^{m})$, that
\[
\nabla_{x}\mathcal{L}(\bar{x},\bar{y},\bar{u})^{T}\beta+\nabla_{x}
g(\bar{x},\bar{y})^{T}\bar{\gamma}=0.
\]
Hence $(\beta,\bar{\gamma})\in\Lambda^{em}(\bar{x},\bar{y},\bar{u},0),$
leading to $(\beta,\bar{\gamma})=0$, by $(A_{1}^{m})$ at $(\bar
{x},\bar{y},\bar{u})$.  
\hfill \qed

\subsection{Proof of Theorem \ref{TheoremFinal}}\label{Proof of Theorem TheoremFinal}
\noindent Since $\bar{x}$ is upper-level regular, by continuity of $G$, the upper-level regularity condition \eqref{UMFCQ} is satisfied at all points $x\in X$, which are sufficiently close to $\bar{x}$. Now from Proposition \ref{lim}, there is a neighborhood $U\times V$ of
$(\bar{x},(\bar{y},\bar{u}))$ such that $(A_{1}^{m})$ and $(A_{2}^{m})$ hold
at any $(x,(y,u)) \in U\times V$. Consider now $t$ close
to $0$ and $(x,(y,u))$ sufficiently close to $(\bar{x},(\bar{y},\bar{u})),$
namely$,$ $x\in U\cap X$ and $(y,u)\in V\cap\mathcal{D}^{t}(x).$ Let
$(\beta,\gamma,\delta,\mu)\in\mathbb{R}^{m+3q}$ such that
\[
\left\{
\begin{array}
[c]{l}%
\nabla_{y}\mathcal{L}^{T}(x,y,u)\beta+\sum\limits_{i=1}^{q}(\gamma_{i}%
-\delta_{i}u_{i})\nabla_{y}g_{i}(x,y)=0,\\
\nabla_{y}g_{i}(x,y)\beta-\mu_{i}-\delta_{i}g_{i}(x,y)=0, \;\, i=1,...,q,\\[1ex]
\gamma_{i}\geq0,\text{ }\delta_{i}\geq0,\text{ }\mu_{i}\geq0, \;\, i=1,...,q,\\[1ex]
\mu_{i}u_{i}=0,\text{ \ }\gamma_{i}g_{i}(x,y)=0,\text{ \ }\delta_{i}(u_{i}g_{i}(x,y)+t)=0, \;\, i=1,...,q.
\end{array}
\right.
\]
Hence, it follows that we have
\[
\nabla_{y}g_{i}(x,y)\beta=0 \, \mbox{ for }\, i\in \nu(x,y,u),\, \gamma_{i}=0 \, \mbox{ for }\, i\in \eta(x,y,u), \, \mbox{ and }\, \nabla_{y}g_{i}(x,y)\beta=\mu_{i}\, \mbox{ for }\, i\in \theta(x,y,u).
\]
Now setting
\[
\tilde{\gamma}_{i}:=\left\{
\begin{array}{lll}
\gamma_{i}       & \;\;\mbox{ if } \;\;& i\in\mathrm{supp}\gamma,\\
-\delta_{i}u_{i} & \;\;\mbox{ if } \;\; & i\in\mathrm{supp}\delta, \\
  0               & \;\; \mbox{ if } \;\;& \mbox{otherwise},
\end{array}
\right.
\]
leads to the conditions
\[
\nabla_{y}\mathcal{L}^{T}(x,y,u)\beta+\sum\limits_{i=1}^{q}\tilde{\gamma
}i\nabla_{y}g_{i}(x,y)=0 \;\; \mbox{ and }\;\;
\left\{\begin{array}{lll}
  \nabla_{y}g_{i}(x,y)\beta=0 & \mbox{ for } & i\in \nu(x,y,u),\\
 \tilde{\gamma}_{i}=0 & \mbox{ for } & i\in \eta(x,y,u),\\
  \tilde{\gamma}_{i}\nabla_{y}g_{i}(x,y)\beta=\tilde{\gamma}_{i}\mu_{i} & \mbox{ for } & i\in \theta(x,y,u),
\end{array}\right.
\]
and as $i\notin\mathrm{supp}\delta$ whenever $i\in\theta(x,y,u)$ so that for all $i\in\theta(x,y,u)$,
\[
0\leq \tilde{\gamma}_{i}\nabla_{y}g_{i}(x,y)\beta
\;\;=\;\;\left\{\begin{array}{lll}
          \gamma_{i}\mu_{i} & \mbox{ if } & i\in\mathrm{supp}\gamma \cap \mathrm{supp}\mu ,\\
          0                 &             & \mbox{otherwise}.
        \end{array}\right.
\]
Thus, for any $i\in\theta(x,y,u)$, we have
\[
(\tilde{\gamma}_{i}=\gamma_{i}>0\wedge\nabla_{y}g_{i}(x,y)\beta=\mu_{i}%
>0)\vee\tilde{\gamma}_{i}\nabla_{y}g_{i}(x,y)\beta=0
\]
so that $(\beta,\tilde{\gamma})\in\Lambda_{y}^{em}(x,y,u,0)$. Applying
$(A_{2}^{m})$, we get
$
\nabla_{x}\mathcal{L}(x,y,u)^{T}\beta+\nabla_{x}g(x,y)^{T}\tilde{\gamma}=0
$
and thus, we have  $(\beta,\tilde{\gamma})\in\Lambda^{em}(x,y,u,0)$, which implies, by
$(A_{1}^{m})$, that $\beta=0$ and $\tilde{\gamma}=0.$ \hfill \qed

\end{document}